\tikzset{
->, 
>=stealth, 
node distance=2.7cm, 
every state/.style={thick, fill=gray!10}, 
initial text=$ $, 
  center coordinate/.style={
    execute at end picture={
      \path ([rotate around={180:#1}]perpendicular cs: horizontal line through={#1},
                                  vertical line through={(current bounding box.east)})
            ([rotate around={180:#1}]perpendicular cs: horizontal line through={#1},
                                  vertical line through={(current bounding box.west)});}}}
\pgfplotsset{compat=1.17} 
\newsavebox\myboxA
\newsavebox\myboxB
\newlength\mylenA
\newcommand*\xoverline[2][0.75]{%
    \sbox{\myboxA}{$\m@th#2$}%
    \setbox\myboxB\null
    \ht\myboxB=\ht\myboxA%
    \dp\myboxB=\dp\myboxA%
    \wd\myboxB=#1\wd\myboxA
    \sbox\myboxB{$\m@th\overline{\copy\myboxB}$}
    \setlength\mylenA{\the\wd\myboxA}
    \addtolength\mylenA{-\the\wd\myboxB}%
    \ifdim\wd\myboxB<\wd\myboxA%
       \rlap{\hskip 0.5\mylenA\usebox\myboxB}{\usebox\myboxA}%
    \else
        \hskip -0.5\mylenA\rlap{\usebox\myboxA}{\hskip 0.5\mylenA\usebox\myboxB}%
    \fi}
\newtheorem{customtheorem}{Theorem}
\newtheorem{theorem}{Theorem}[section]
\newtheorem{definition}[theorem]{Definition}
\newtheorem{corollary}[theorem]{Corollary}
\newtheorem{lemma}[theorem]{Lemma}
\theoremstyle{definition}
\newtheorem{remark}[theorem]{Remark}
\newcommand{\barc}{\bar{c}}
\def\N{{\mathbb N}}
\def\Z{{\mathbb Z}}
\def\Q{{\mathbb Q}}
\def\F{{\mathbb F}}
\def\calD{{\mathcal D}}
\def\M{{\mathcal M}}
\def\L{{\mathcal L}}
\def\U{{\mathcal U}}
\def\V{{\mathcal V}}
\DeclareMathOperator{\ch}{char}
\DeclareMathOperator{\rev}{rev}
\DeclareMathOperator{\vardiv}{div}
\DeclareMathOperator{\val}{val}
\DeclareMathOperator{\supp}{supp}
\newcommand{\tinfty}{t^{1/p^{\infty}}}
\newcommand{\PPFp}{\mathbb{F}_p((t^{1/p^{\infty}}))}
\newcommand{\PRFq}{\mathbb{F}_q[[t^{1/p^{\infty}}]]}
\newcommand{\PPF}{\mathbb{F}((t^{1/p^{\infty}}))}
\newcommand{\PRF}{\mathbb{F}[[t^{1/p^{\infty}}]]}
\newcommand{\PRFbar}{\xoverline{\mathbb{F}}[[t^{1/p^{\infty}}]]}
\newcommand{\FpbarQ}{\xoverline{\mathbb{F}}_p((t^{\mathbb{Q}}))}
\newcommand{\FbarQ}{\xoverline{\mathbb{F}}((t^{\mathbb{Q}}))}
\newcommand{\FpQ}{\mathbb{F}_p((t^{\mathbb{Q}}))}
\newcommand{\Fp}{\mathbb{F}_p}
\newcommand{\perf}{^{1/p^\infty}}
\newcommand{\Fpbar}{\xoverline{\mathbb{F}}_p}
\newcommand{\Fbar}{\xoverline{\mathbb{F}}}
\newcommand{\X}{\mathbb{X}}
\newcommand{\Y}{\mathbb{Y}}
\newcommand{\W}{\mathbb{W}}
\newcommand{\Lring}{\L_\text{ring}}
\newcommand{\Lval}{\L_{\text{val}}}
\newcommand{\Log}{\L_{\text{og}}}
\newcommand{\acl}{\text{acl}}
\newcommand{\tp}{\text{tp}}
\newcommand\Pow{\textup{Pow}}
\newcommand\andbf{\textbf{ and }}
\newcommand\orbf{\textbf{ or }}
\newcommand\notbf{\textbf{ not }}
\newcommand\true{\textsc{true}}
\newcommand\false{\textsc{false}}
\newcommand\PostradixStates{\textsc{PostradixStates}}
\newcommand\RelevantStates{\textsc{RelevantStates}}
\newcommand\enumeratealg{\textsc{enumerate}}
\newcommand\decidealg{\textsc{decide}}
\newcommand\ALG{\textsc{alg}}
\newcommand\encode{\textsc{encode}}
\newcommand\maxram{\textsc{maximal\_ramification}}
\newcommand\maxexp{\textsc{maximal\_expansion}}
\newcommand\lincomb{\textsc{linear\_combination}}
\newcommand\IsRoot{\textsc{IsRoot}}
\newcommand\isrootalg{\textsc{is\_root}}
\newcommand\postradix{\textsc{postradix}}
\newcommand\reachablestates{\textsc{reachable\_states}}
\newcommand\relevantstates{\textsc{relevant\_states}}
\newcommand\mindfa{\textsc{min\_dfa}}
\newcommand\transition{\textsc{transition\_graph}}
\newcommand\subgraph{\textsc{subgraph}}
\newcommand\cycles{\textsc{cycles}}
\newcommand\wellformed{\textsc{well\_formed}}
\newcommand\wellord{\textsc{well\_ordered}}
\newcommand\listwellord{\textsc{list\_well\_ordered}}
\newcommand\saguaro{\textsc{rooted\_saguaro}}
\newcommand\REV{\textsc{rev}}
\newcommand\add{\textsc{add}}
\newcommand\equals{\textsc{equals}}
\newcommand\mult{\textsc{multiply}}
\newcommand\multdfa{\textsc{multiply\_dfa}}
\newcommand\power{\textsc{power}}
\newcommand\accpaths{\textsc{accepting\_paths}}
\newcommand\listdfao{\textsc{list\_dfao}}
\newcommand\countroots{\textsc{count\_roots}}
\newcommand\isin{\textsc{is\_in}}
\newcommand\addpol{\textsc{additive\_multiple}}
\newcommand\dfa{\textsc{equivalent\_dfa}}
\newcommand\plabeling{\textsc{$p$\_labeling}}
\algnewcommand{\LineComment}[1]{\State \(\triangleright\) #1}
\title{Decidability of positive characteristic tame Hahn fields in $\L_t$}
\author{Victor Lisinski\thanks{The author was funded by an EPSRC award at the University of Oxford, with additional support from the Royal Swedish Academy of Sciences and Corpus Christi College Oxford.}}
\date{}
\newcommand{\Addresses}{{
  \vspace*{3em}
  \footnotesize

\textsc{Mathematical Institute, Woodstock Road, Oxford OX2 6GG.}\par\nopagebreak
  \textit{E-mail address}: \texttt{lisinski@maths.ox.ac.uk}
}}
\begin{document}

\abstract{
We show that any positive characteristic tame Hahn field $\F((t^\Gamma))$ containing $t$ is decidable in $\L_t$, the language of valued fields with a constant symbol for $t$, if $\F$ and $\Gamma$ are decidable. In particular, we obtain decidability of $\PPFp$ and $\FpQ$ in $\L_t$. This uses a new AKE-principle for equal characteristic tame fields in $\L_t$, building on work by Kuhlmann, together with Kedlaya's work on finite automata and algebraic extensions of function fields. We also obtain an AKE-principle for tame fields in mixed characteristic.}

\maketitle

\tableofcontents

\section{Introduction}
While the first order theory, and in particular decidability, of the $p$-adic numbers $\Q_p$ is well understood thanks to the work by Ax and Kochen \cite{AxKo_II} and, independently, Ershov \cite{MR0207686}, a long standing open problem is that of decidability of the equal characteristic analogue $\F_p((t))$. Some recent progress have been made on this topic when restricting the question to existential decidability. In \cite{MR2018551}, Denef and Shoutens showed that the existential theory of $\F_p((t))$ in the language of rings with a constant symbol for $t$ is decidable assuming resolution of singularities in characteristic $p$. In \cite{AnFe16}, Anscombe and Fehm showed that the existential theory of $\F_p((t))$ is unconditionally decidable in the language of rings. The results by Anscombe and Fehm uses decidability results on tame fields in the one sorted language of valued fields established by Kuhlmann in \cite{Ku16} and the fact that finite extensions of $\F_p((t))$ inside $\FpQ$ are isomorphic to $\F_p((t))$ itself.

This important connection between the first order theories of $\F_p((t))$ and $\FpQ$ in the language of valued fields served as a motivation for looking more closely at the first order theory of $\FpQ$, and more generally of equal characteristic tame fields, in $\L_t$, the language of valued fields with a distinguished constant symbol for $t$. The main result of this paper is the following.

\begin{customtheorem}
\label{thm: main decidability result}
Let $\F$ be a perfect field of characteristic $p$ which is decidable in the language of rings and let $\Gamma$ be a $p$-divisible decidable ordered group which is decidable in the language of ordered groups with a distinguished constant symbol $1$. Then, $\F((t^\Gamma))$ is decidable in $\L_t$, the language of valued fields with a distinguished constant symbol for $t$.
\end{customtheorem}

The language $\L_t$ is needed to establish a complete analogue to decidability results of extensions of $\Q_p$ in the language of rings, since $\F_p[t]$ in $\F_p((t))$ is like $\Z$ in $\Q_p$. Further motivating this analogy, Kartas has recently showed that decidability results of equal characteristic fields in $\L_t$ can be transferred to decidability results for mixed characteristic fields in the language of valued fields \cite{konstantinos2020decidability}. Hence, the results in this paper can be used to obtain decidability results for tame fields in mixed characterstic, which was previously unknown and has seemed to be inaccessible working in the language of rings.

To show Theorem \ref{thm: main decidability result}, we obtain a new Ax-Kochen-Ershov principle for certain tame fields in $\L_t$, extending results by Kuhlmann. This principle shows that the type of $t$ is axiomatised by one variable positive existential sentences, in the following sense. 

\begin{theorem}
\label{thm: main ake result}
Let $(L,v)$ be a tame field containing $\F_p(t)$, with $v(t)>0$ and let $T$ be the $\L_t$-theory of $(L,v)$. Let $(K,v)$ be the relative algebraic closure of $\F_p(t)$ in $L$. Suppose that $(K,v)$ is algebraically maximal and that $vL/vK$ is torsion free. Let
$$S=\left\{\exists X \left(f(X)=0 \land v(X)\ge 0 \right) \ \mid \ f(X) \ \textup{monic in} \  F_p[t][X]\right\} \cap T.$$
Let $(F, w)$ be a tame field containing $\F_p(t)$ such that $(F,w) \models S$. Suppose that $vL\equiv wF$ in the language of ordered groups with a distinguished constant symbol $\pi$, interpreted as $v(t)$ and $w(t)$ respectively. Suppose that $Lv\equiv Fw$ in the language of rings. Then $(L,v)\equiv (F,w)$ in the language $\L_t$.
\end{theorem}

While this principle was obtained with equal characteristic $p$ in mind, we also show that the following modified version of the result holds in mixed characteristic. 

\begin{theorem}
\label{thm: mixed char ake result}
Let $(L,v)$ be a tame field of mixed characteristic $(0,p)$ and let $T$ be the $\Lval$-theory of $(L,v)$. Let $(K,v)$ be the relative algebraic closure of $\Q$ in $L$. Suppose that $(K,v)$ is algebraically maximal and that $vL/vK$ is torsion free. Let
$$S=\left\{\exists X \left(f(X)=0 \land v(X) \ge 0 \right) \ \mid \ f(X) \ \textup{monic in} \ \Z[X]\right\} \cap T.$$ Let $(F, w)$ be a tame field of mixed characteristic $(0,p)$ such that $(F,w) \models S$. Suppose that $vL\equiv wF$ in the language of ordered groups with a distinguished constant symbol $\pi$, interpreted as $v(p)$ and $w(p)$ respectively. Suppose that $Lv\equiv Fw$ in the language of rings. Then $(L,v)\equiv (F,w)$ in the language $\Lval$.
\end{theorem}

Theorem \ref{thm: main ake result} is combined with the work by Kedlaya on algebraic extensions of function fields \cite{MR2289431} and the approximation method by Lampert to obtain Theorem \ref{thm: main decidability result}.

\section{Preliminaries}
\subsection{Valued fields}
\label{sec: valued fields}
With a valuation on a field, we mean an additive valuation as defined for example in \cite{pre05}.

For a valued field $(K,v)$, i.e. a field $K$ with an associated valuation $v$, we will denote its value group by $vK$ and its residue field by $Kv$. We say that $(K,v)$ has equal characteristic if $\ch(K)=\ch(Kv)$, and that $(K,v)$ has mixed characteristic if $\ch(K)\neq\ch(Kv)$. For an element $\gamma\in vK$, a symbol $\bowtie \in\{<,\le, >,\ge\}$, and a subset $S$ of $K$, we write
$$S_{\bowtie\gamma}=\{x\in S \ | \ v(x)\bowtie\gamma\}.$$
In particular, the valuation ring of $K$ is written as $K_{\ge 0}$. For an element $a \in K_{\ge 0}$, we write $\bar{a} \in Kv$ for the projection of $a$ under the residue map.

For any $\gamma\in vK$, define the following sets
$$V_\gamma:=K_{>\gamma}$$
$$W_\gamma:=K_{\ge\gamma}.$$
Then $\{V_\gamma, W_\gamma \ | \ \gamma\in vK\}$ is a fundamental system of a Hausdorff topology on $K$, making it into topological ring. For details, see Theorem 20.16 in \cite{warner1989topological}. We call this topology the \textbf{valuation topology} on $(K,v)$. There is a unique topological field $\widehat{K}$ which is complete as a topological space and in which $K$ is dense under the valuation topology. There is also a unique valuation $\hat{v}$ on $\widehat{K}$ that extends $v$ and defines the topology on $\widehat{K}$. We will denote $\hat{v}$ by $v$ as well. For details, see Theorem 20.19 in \cite{warner1989topological}.

We will mainly be considering the $t$-adic valuation $v_t$ on fields consisting of formal expansions in $t$, such as $\F_p(t)$, $\F_p((t))$ and $\F_p((t^\Q))$ (see Section \ref{sec: hahn fields} for definitions). This valuation sends an element $\sum_\gamma a_\gamma t^\gamma$ to the minimal $\gamma_0$ such that $a_{\gamma_0}\neq 0$.

If $(L,v)$ is a valued field, we say that $(E,w)$ is a valued subfield of $(L,v)$ if $E$ is a subfield of $L$ and if $v|_E=w$. For a subfield $K$ of $L$, we will write $(K,v)$ to mean the valued subfield $(K,v|_K)$ of $(L,v)$.

Let $(K,v)$ be a valued field and let $L$ be a finite extension of $(K,v)$ of degree $n$. Then $v$ has finitely many extensions to $v$, call them $v_1,\ldots,v_s$. With $e_i:=(v_iL:vK)$ and $f_i:=[Lv_i:Kv]$, we have that $n$ satisfy the \textbf{fundamental inequality} 
$$n\ge\sum_{i=1}^s e_i f_i.$$

\begin{definition}
Let $(K,v)$ be a valued field. A finite extension $L/K$ of valued fields is \textbf{defectless} (with respect to $v$) if the fundamental inequality is an equality. We say that $(K,v)$ is \textbf{defectless} if any finite extensions of $(K,v)$ is defectless.
\end{definition}

\begin{definition}
Let $(K, v)$ be a valued field. A valued field extension $(L, v)$ of $(K, v)$ is an \textbf{immediate extension} if $[Lv : Kv] = 1$ and $(vL : vK) = 1$. If $(K,v)$ does not admit any proper (algebraic) immediate extension, then $(K,v)$ is said to be \textbf{(algebraically) maximal}.
\end{definition}

For a field $K$, we denote its algebraic closure by $\xoverline{K}$. The following standard result for approximating roots to polynomials over valued fields is found for example as Theorem 4.1.7 in \cite{pre05}.
\begin{theorem}[Krasner's Lemma]
Let $(K,v)$ be a valued field and let $x_0\in\xoverline{K}$ and suppose that its minimal poynomial
$$f(X)=\prod_{i=0}^n(X-x_i)\in K[X]$$
is separable. Let $y\in\xoverline{K}$ be such that $v(y-x_0)>\max_{i\neq 0}\{v(x_0-x_i)\}$. Then, $x_0\in K(y)$.
\end{theorem}

The following result will be used to apply Krasner's Lemma by taking approximations of separable polynomials.

\begin{theorem}[{\cite[Theorem 2.4.7]{pre05}}]
    \label{thm: approximate polynomials}
    Let $(K,v)$ be a valued field and lef
    $$f(X)=a_0+\cdots+a_{n-1} X^{n-1}+ X^n\in K[X]$$
    be a polynomial with distinct roots $x_1,\ldots,x_n\in K$. Let $\alpha\in vK$. Then, there exists $\gamma\in vK$ such that for 
    any polynomial
    $$g(X)=\prod_{i=1}^n(X-y_i)=b_0+\cdots+b_{n-1} X^{n-1}+ X^n$$
    with $y_1,\ldots,y_n\in K$ such that 
    $$\min_{0\le i< n}\{v(a_i-b_i)\}>\gamma,$$
    we have that for any $i\in\{1,\ldots,n\}$ there exists a $j\in\{1,\ldots,n\}$ with $v(x_i-y_j)>\alpha$. Furthermore, if $\alpha\ge\max_{i\neq j}\{v(x_i-x_j)\}$, then there exists only one $j$ such that $v(x_i-x_j)>\alpha$.
    
\end{theorem}

This paper mainly considers henselian fields, for which we use the following definition.
\begin{definition}
A valued field $(K,v)$ is called \textbf{henselian} if the valuation $v$ admits a unique extension to any algebraic extension of $K$.
\end{definition}

We will also use the following equivalent characterisation (see for example \cite[Theorem 4.1.3]{pre05}).

\begin{theorem}
    \label{thm: hensel}
    The following are equivalent.
    \begin{enumerate}
        \item $(K,v)$ is henselian.
        \item Let $f,g,h\in K_{\ge 0}[X]$ satisfy $\bar{f}=\bar{g}\bar{h}$, with $\bar{g}$, $\bar{h}$ relatively prime in $Kv[X]$. Then, there exist $g_1, h_1\in K_{\ge 0}[X]$ with $f=g_1h_1$, $\xoverline{g_1}=\bar{g}$, $\xoverline{h_1}=\bar{h}$ and $\deg{g_1}=\deg{\bar{g}}$.
        \item For each $f\in K_{\ge 0}[X]$ and $\alpha\in Kv$ with $\bar{f}(\alpha)$ and $\bar{f'}(\alpha)\neq 0$, there exists an element $a\in K_{\ge 0}$ such that $\bar{a}=\alpha$ and $f(a)=0$.
    \end{enumerate}
\end{theorem}

We will often use the following, appearing in Theorem 5.2.2. and Theorem 5.2.5. in \cite{pre05}.

\begin{theorem}
For a valued field $(K,v)$, there is a minimal immediate henselian algebraic extension $K^h$ of $K$, called the \textbf{henselisation} of $K$, which is unique up to valuation preserving isomorphism over $K$.\footnote{Minimal in the sense that $K^h$ embedds uniquely over $(K,v)$ into any henselian extension of $(K,v)$.}
\end{theorem}

With $K^s$ being the separable closure of $K$ and $v_s$ being an extension of $v$ to $K_s$, the henselisation $K^h$ of $K$ is defined as the fixed field of the subgroup of the absolute Galois group $G(K^s/K)$ which preserves the valuation ring $K^s_{\ge 0}$. In particular, $K^h$ is contained in $K^s$.

The \textbf{rank} of $v$ is the rank of $vK$, i.e. the number of proper convex subgroups of $vK$. When $v$ is of rank one, then $(\widehat{K}, v)$ is henselian (see for example \cite[Proposition 2.1.1]{pre05} together with \cite[Proposition 1.2.2]{pre05}).

\subsection{Algorithms, languages and model theory}
\label{sec: alg lang mod}
An \textbf{alphabet} is a non-empty set of symbols. A \textbf{string} over $\Sigma$ is a finite sequence with elements in $\Sigma$. We denote by $\Sigma^*$ the set of strings over $\Sigma$. If $s=s_1\cdots s_n\in \Sigma^*$, we will sometimes consider expressions of the form $s_1\dots s_0$ when iterating over substrings of $s$. This should be understood as the empty string. A subset of $\Sigma^*$ is called a \textbf{formal language} over $\Sigma$. We use the term formal language to distinguish from the notion of language in first order logic. With an \textbf{algorithm}, we mean a Turing machine, or any other equivalent model of computation. We will write algorithms in pseudocode using the syntax as exemplified here.
\begin{algorithm}[H]
\renewcommand{\thealgorithm}{}
\begin{algorithmic}
\caption*{\textsc{an\_algorithm}(input with specifications)}
\LineComment{A comment.}
\State $x\gets 0$ \Comment{Assign $0$ to the variable $x$.}
\While{$x\le n$} \Comment{Verify condition on $x$.}
    \For{$k\in \{x,\ldots,n\}$} \Comment{Iterate over elements.}
        \If{$P(x)$}
            \State{\Return $Q$}
        \EndIf
    \EndFor
    \State $x\gets x+1$
\EndWhile
\end{algorithmic}
\end{algorithm}

In many situations, we will define algorithms that implicitly depend on a parameter, for example a prime number $p$. When it is clear from context, we will not mention this parameter when using the algorithm, but rather assume that the correct version of the algorithm is used.

\begin{definition}
\label{def: recursively enumerable}
Let $\L$ be a formal language over a finite alphabet $\Sigma$. We say that $\L$ is \textbf{recursively enumerable} if there is an algorithm with input alphabet $\Sigma$ which on input $w\in\Sigma^*$ returns $\true$ if and only if $w\in L$.
\end{definition}

We will use this alternative characterisation of recursively enumerable, which is essentially Theorem 3.13 in \cite{sipser13}. For this, we identify $\N$ with the set of strings over the alphabet $\{*\}$ (or any alphabet of size $1$), by mapping $n\in\N$ to the string $*\cdots *$ of length $n$.

\begin{lemma}
\label{lem: recursively enumerable}
A formal language $\L$ over a finite alphabet $\Sigma$ is recursively enumerable if and only if there is an algorithm $\enumeratealg$ with input alphabet $\{*\}$ which on input $n\in \N$ returns an element $s\in \L$, and such that $\enumeratealg$ is surjective as a function from $\N$ to $\L$.
\end{lemma}

\begin{proof}
Suppose that $\L$ recursively enumerable and let $\ALG$ be the corresponding algorithm from Definition \ref{def: recursively enumerable}. Let $\Sigma=\{s_1,\ldots,s_m\}$ with $\lvert \Sigma\rvert = m$. Let $(p_n)_{n\ge 1}$ be the sequence of all prime numbers in increasing order. Consider the encoding of $\Sigma^*$ which associates a string $\prod_{j=1}^k s_{i_j}\in\Sigma^*$ with the unique natural number $\prod_{j=1}p_j^{p_{i_j}}$. This encoding induces a well-order $<_\Sigma$ on $\Sigma^*$. Write $\Sigma^*=\{w_i \ | \ i\in\N\}$, where $w_i<_\Sigma w_j$ whenever $i<j$. For $n\in\N$, we now define $\enumeratealg(n)$ as follows. Let $R$ be an empty list, to which we will add strings $w\in \L$.\footnote{More precisely, $R$ is a blank tape on a two tape Turing machine which only writes strings to $R$ in one direction, and uses some delimiter to distinguish between strings.} For $i=1,2,3,\ldots \ $, run $\ALG$ for $i$ steps on inputs $w_1,\ldots,w_i$. At each iteration, add to $R$ any such string for which $\ALG$ returns $\true$ after $i$ steps. Continue this until $R$ contains $n$ strings. By construction, this procedure will halt. The output of $\enumeratealg(n)$ is then the $n$:th string on $R$. If $w\in \L$, then $w=\enumeratealg(n)$ for some $n\in \N$. In fact, $w=\enumeratealg(n)$ for infinitely many $n$, since if $\ALG$ returns $\true$ on input $w$ after $i$ steps, it also returns $\true$ on input $w$ after $j$ steps for all $j>i$.

Conversely, suppose that $\enumeratealg$ is as described. We can then repeatedly compare $w$ to $\enumeratealg(n)$ for $n\in\N$. We define $\ALG$ to return $\true$ on input $w\in\Sigma^*$ if (and only if) $w=\enumeratealg(n)$ for some $n\in\N$. Since $\enumeratealg$ is surjective as a function from $\N$ to $\L$, we get that $\L$ is recursively enumerable.
\end{proof}

In the situation of Lemma \ref{lem: recursively enumerable}, we say that $\enumeratealg$ \textbf{enumerates} $\L$.

\begin{definition}
Let $\L$ be a formal language over a finite alphabet $\Sigma$. We say that $\L$ is \textbf{decidable} if there is an algorithm with input alphabet $\Sigma$ which on input $w\in\Sigma^*$ returns $\true$ if $w\in L$ and $\false$ if $w\notin L$.
\end{definition}

It follows that if $\L$ is decidable, then $\L$ is recursively enumerable by the same algorithm. We note that for any finite alphabet $\Sigma$, the set of strings $\Sigma^*$ is trivially decidable by an algorithm which returns $\true$ on every input.

For a finite field $\F_q$, where $q=p^n$ for some prime $p$ and some positive integer $n$, we will view $\F_q$ as a finite alphabet by fixing an irreducible polynomial $f$ of degree $n$ over $\F_p$. The alphabet $\F_q$ then consists of expressions of the form $a_0+a_1X+\cdots + a_{n-1}X^{n-1}$, where $a_i\in\{0,\ldots,p-1\}$.

For model theoretic terminology, we mostly follow the conventions in \cite{tenzie}. To clarify the two somewhat conflicting notions of language that we use, a first-order language $\L$ is an alphabet and the set of $\L$-sentences is a formal language over $\L$. We will refer to first-order languages simply as languages. The languages we will consider can be seen as finite alphabets by using $*$ instead of $\N$ when necessary. For example, $X_n$ will be an abbreviation for the string $X*\cdots *$ of length $n+1$.

If $\L$ is a language and $A$ and $B$ are $\L$-structures, we write $A\equiv B$ if $A$ and $B$ have the same $\L$-theories. Furthermore, if $C$ is a set which embeds in both $A$ and $B$, we write $A\equiv_C B$ if $A$ and $B$ have the same $\L(C)$-theories, where $\L(C)$ denotes the language $\L$ extended by adjoining constant symbols for the elements in $C$, interpreted by the respective embeddings into $A$ and $B$. In this situation, we will often for simplicity say that $C$ is a common subset of $A$ and $B$ and assume the embeddings to be inclusions. If $C$ is a singleton $\{c\}$, we write $A\equiv_c B$ instead of $A\equiv_C B$.

For an $\L$-structure $A$ and a subset $C$ of $L$, denote by $\acl_A(C)$ the model theoretic algebraic closure of $C$ in $A$. The following lemma is mentioned in the context of fields in \cite[p. 92]{dittmann2018a}. The provided proof follows closely the proof of Lemma 5.6.4 in \cite{tenzie} and was suggested by Ehud Hrushovski.

\begin{lemma}
\label{lem: acl}
Let $\L$ be a language and let $A$ and $B$ be $\L$-structures with a common subset $C$ such that $A\equiv_C B$. Then, there is a bijection between $\acl_A(C)$ and $\acl_B(C)$ such that $A\equiv_{\acl_A(C)}B$.  
\end{lemma}

\begin{proof}
Let $f$ be a partial embedding from $\acl_A(C)$ to $\acl_B(C)$ extending the identity on $C$. Consider $B$ as an $\L(C')$-structure by interpreting any constant symbol corresponding to $c\in C'$ by $f(c)$. Suppose that the domain $C'$ of $f$ is maximal with respect to inclusion and suppose that $f$ is such that $A\equiv_{C'}B$. Since the identity on $C$ satisfies these properties, we have that such a map $f$ exists.

Let $a$ be a tuple of elements in $\acl_A(C)$ and let $M$ be a saturated elementary extension of $A$. Since $a$ is algebraic over $C'$, there is an $\L(C')$-formula $\phi(X)$ such that $A\models\phi(a)$. Take $\phi$ to be minimal, in the sense that the cardinality of $\phi(M)$ is minimal. We then have that $\phi$ isolates the type $\tp(a/C')$. Indeed, suppose that there is an element $a'\in \phi(M)$ such that $\tp(a/C')\neq\tp(a'/C')$. Then, there is an $\L(C')$ formula $\phi'$ such that $a\in\phi'(M)$ but $a'\notin\phi'(M)$. Hence, $(\phi\land\phi')(M)$ is a non-empty proper subset of $\phi(M)$, contradicting minimality of $\phi$. Since $A\equiv_{C'} B$, there is an element $b\in \phi(B)$. Since $\phi$ isolates $\tp(a/C')$, we have for any $\psi\in \tp(a/C')$ that $A\models\forall X(\phi(X)\rightarrow \psi(X))$. This implies that $B\models\forall X(\phi(X)\rightarrow \psi(X))$ as well, so $\tp(b/C')=\tp(a/C')$. Thus, we can extend $f$ by sending $a$ to $b$, so $a\in C'$ by maximality of $f$. This shows that $C'=\acl_A(C)$ and $f$ is the embedding we are looking for.
\end{proof}

We denote by $\Lring=\{+,-,\cdot,0,1\}$ the language of fields and by $\Log=\{+,<,0\}$ the language of ordered groups. We extend $\Lring$ to the language of valued fields $\Lval=\{+,-,\cdot,^{-1},0,1,\vardiv\}$, where $\vardiv$ is a binary relation symbol. For a valued field $(K,v)$ and elements $a,b\in K$, the relation $\vardiv(a,b)$ is interpreted as $v(a)\ge v(b)$. While $v$ is not a symbol in $\Lval$, we will for simplicity write $v(X)=0$ as a shorthand for the $\Lval$-formula $\vardiv(1,X) \land \vardiv(X,1)$, since this formula defines the set of all elements with valuation $0$ in $(K,v)$. When we talk about the theories of $Kv$ and $vK$, we mean the $\Lring$-theory and $\Log$-theory respectively, if not stated otherwise.

We let $\L_t=\L_{\val}\cup\{t\}$, where $t$ is a constant symbol not in $\L_{\val}$. A Hahn field $\F((t^\Gamma))$ containing $t$ is an $\L_t$-structure by considering the $t$-adic valuation on $\F((t^\Gamma))$, as described in Section \ref{sec: hahn fields}. 

If $\L$ is a recursively enumerable language, then the set of $\L$-sentences is also recursively enumerable. If $A$ is an $\L$-structure, we say that $A$ is decidable if its $\L$-theory is decidable. We will use the fact that a recursively enumerable complete theory is decidable, which is seen by using the algorithm which enumerates $T$ to determine which one of $\phi$ and $\neg\phi$ are in $T$. If $T$ has a recursively enumerable axiomatisation, then $T$ is recursively enumerable, by listing the (finitely many) sentences with derivations of length at most $n$ that one can deduce from the first $n$ axioms of $T$, and then repeat for $n+1$, and so on.

Using results on the $\Log$-theories of ordered abelian groups in \cite{MR114855}, we get the following lemma.
\begin{lemma}
\label{lem: ord grps decidable}
The $\Log(1)$-theories of $\frac{1}{p^\infty}\Z$ and $\Q$ are decidable.
\end{lemma}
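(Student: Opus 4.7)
The plan is to reduce both statements to the classical decidability results for ordered abelian groups in $\L_{\og}$ established in \cite{MR114855}, and then observe that adjoining the constant $1$ preserves decidability in each case.

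For $\Q$, the $\L_{\og}$-theory is the theory of nontrivial divisible ordered abelian groups, which is complete, admits quantifier elimination, and is decidable. I would axiomatize the $\L_{\og}(1)$-theory of $\Q$ by this theory together with the single extra axiom $0 < 1$. Completeness of this expansion is immediate from the $\Q$-vector space structure: for any nontrivial divisible ordered abelian group $G$ and any two positive $g, h \in G$, scalar multiplication by the appropriate positive rational is an $\L_{\og}$-automorphism of $G$ sending $g$ to $h$, so the choice of positive interpretation of $1$ is unique up to isomorphism. Recursive axiomatization plus completeness then yields decidability.

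For $\frac{1}{p^\infty}\Z$, I would axiomatize the $\L_{\og}(1)$-theory by the ordered abelian group axioms, $p$-divisibility $\forall x\,\exists y\,(py = x)$, $0 < 1$, and, for each prime $q \neq p$ and each $n \geq 1$, Presburger-style congruence axioms stating that every element is congruent modulo $q^n$ to a unique term of the form $\underbrace{1 + \cdots + 1}_{i}$ with $0 \leq i < q^n$. Since $\gcd(p,q) = 1$, the quotient $\frac{1}{p^\infty}\Z\,/\,q^n\frac{1}{p^\infty}\Z$ is cyclic of order $q^n$ and is generated by the image of $1$, so $\frac{1}{p^\infty}\Z$ is indeed a model. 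Completeness would follow by a back-and-forth argument analogous to the one for Presburger arithmetic, building on the Szmielew-type invariants framework for ordered abelian groups in \cite{MR114855}, and recursiveness of the axiomatization then yields decidability.

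The main obstacle is establishing completeness of the axiomatization for $\frac{1}{p^\infty}\Z$: one has to verify that the Presburger-style congruence axioms at primes $q \neq p$, combined with $p$-divisibility, pin down the full first-order theory rather than leaving room for inequivalent extensions. This amounts to a routine back-and-forth between two arbitrary models of the proposed axioms, using $p$-divisibility to align the $p$-part and the mod-$q^n$ congruences to align the residues at other primes, all within the invariants classification of \cite{MR114855}.
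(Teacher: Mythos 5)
There is a genuine gap in the $\frac{1}{p^\infty}\Z$ half of your argument: the axioms you propose (ordered abelian group, $p$-divisibility, $0<1$, and the Presburger-style congruence schema at prime powers $q^n$ with $q\neq p$) do \emph{not} axiomatize the complete $\L_{\og}(1)$-theory of $\frac{1}{p^\infty}\Z$. Concretely, take $G = \frac{1}{p^\infty}\Z \times \Q$ with the lexicographic order (first coordinate dominant) and interpret $1$ as $(1,0)$. This $G$ is $p$-divisible, and for $q\neq p$ we have $q^nG = (q^n\frac{1}{p^\infty}\Z)\times\Q$, so the congruence class of $(a,b)$ modulo $q^n$ is determined by the class of $a$ alone and the representatives $i\cdot(1,0)$, $0\le i<q^n$, enumerate $G/q^nG$ exactly once each --- so every one of your axioms holds. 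But $G$ is not \emph{regular} in the sense of Robinson--Zakon: the open interval between $(1,0)$ and $(1,1)$ is infinite yet contains no $q$-divisible element, since such an element would force $1\in q\frac{1}{p^\infty}\Z$. Regularity is a first-order schema, and $\frac{1}{p^\infty}\Z$ (being archimedean) satisfies it, so $G\not\equiv\frac{1}{p^\infty}\Z$. Hence your proposed theory has two non-elementarily-equivalent models, and no back-and-forth can salvage it without adding further axioms. This is precisely the subtlety the reference \cite{MR114855} is built to handle, and the paper sidesteps it by \emph{not} writing axioms from scratch: it passes to the language $\L_{\og}^+$ with divisibility predicates $D_n$ for $n$ coprime to $p$, invokes the recursive, \emph{model-complete} axiomatizations of the $\L_{\og}^+$-theories of $\frac{1}{p^\infty}\Z$ and $\Q$ from \cite{MR114855}, adds $1>0$ together with $\forall X(nX\neq 1)$ for $n$ coprime to $p$, and then observes that $\frac{1}{p^\infty}\Z$ (resp.\ $\Q$) embeds into every model of the resulting theory; model completeness upgrades this to an elementary embedding, so the structure is a prime model and the theory is complete, hence decidable, and the $\L_{\og}(1)$-theory is a subset of this.

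A smaller but still real error occurs in your $\Q$ argument: you assert that for any nontrivial divisible ordered abelian group $G$ and any two positive $g,h\in G$, multiplication by a positive rational is an $\L_{\og}$-automorphism sending $g$ to $h$. This is false --- in $\R$ no rational multiplication sends $1$ to $\pi$, and in $\Q\times\Q$ with lexicographic order there is no automorphism at all sending $(1,0)$ to $(0,1)$, since the two elements generate convex subgroups of different shape. The conclusion (that the choice of positive constant does not affect the theory) is correct, but the argument you need is again the prime-model one: $\Q\cdot 1$ is a copy of $\Q$ inside every model of the theory, and quantifier elimination for divisible ordered abelian groups makes it an elementary substructure.
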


\begin{proof}
Let $\Log^+$ be the language obtained by adding to $\Log$ predicates $D_n$ for each $n$ coprime to $p$. These predicates are intepreted as
$$D_n(x) \Leftrightarrow \exists y(ny=x).$$
Both $\frac{1}{p^\infty}\Z$ and $\Q$ are regularly dense, in the sense of \cite{MR114855}. That is, they are archimedean and have no smallest positive element.
From Theorem 4.6 in \cite{MR114855} and its preceding discussion, we have recursively enumerable axiomatisations of the $\Log^+$-theories of $\frac{1}{p^\infty}\Z$ and $\Q$. From the proof of the same theorem, they are also model complete. Call these theories $T_{1/p^\infty}$ and $T_{\Q}$ respectively. In particular, all models of $T_{1/p^\infty}$ are $p$-divisible and all models of $T_{\Q}$ are divisible. We now claim that the $\Log^+(1)$-theories of $\frac{1}{p^\infty}\Z$ and $\Q$ are axiomatised by
$$T_{1/p^\infty} \cup \{1>0\}\cup \left\{\forall X(nX\neq 1) \ | \ n\in\N \setminus p\N \right\}$$ and
$$T_{\Q} \cup \{1>0\}$$ respectively. Indeed, any model of these extended theories contain $\frac{1}{p^\infty}\Z$ respectively $\Q$ as submodels. By model completeness of the $\Log^+$-theories, these are elementary substructures as $\Log^+$-structures. Hence, they are also elementary substructures as $\Log^+(1)$-structures. In other words, they are prime models of the respective $\Log^+(1)$-theories, and so the theories are complete. Since we added a recursively enumerable set of sentences to the recursively enumerable axiomatisations, we have that the $\Log^+(1)$-theories also have recursively enumerable axiomatisations. Hence they are recursively enumerable and decidable, as described above. Since the $\Log(1)$-theories are subsets of these theories, the result follows.
\end{proof}

\subsection{Hahn fields}
\label{sec: hahn fields}
The main objects of study in this thesis are Hahn fields, or fields of generalised power series. Introduced in \cite{hahn1907}, they are constructed in the following way. Let $\F$ be a field, let $\Gamma$ be an ordered abelian group and let $t$ be transcendental over $\F$. A generalised powers series in $t$ with coefficients in $\F$ and exponents in $\Gamma$ is a formal expression of the form
$$x=\sum_{\gamma\in\Gamma}a_\gamma t^\gamma$$
where the support, i.e. the set $\{\gamma\in\Gamma \ | \ a_\gamma\neq 0\}$, is well-ordered. The Hahn field $\F((t^\Gamma))$ is then the set of all such expressions together with term-wise addition and multiplication defined by
$$\left(\sum_{\gamma\in\Gamma}a_\gamma t^\gamma\right)\left(\sum_{\gamma\in\Gamma}b_\gamma t^\gamma\right)=\sum_{\gamma\in\Gamma}\sum_{\alpha+\beta=\gamma}a_{\alpha}b_{\beta} t^\gamma.$$
Note that multiplication is well defined since the supports are well-ordered. As the name suggests, $\F((t^\Gamma))$ is a field under these operations. 

For a generalised power series $x=\sum_{\gamma\in\Gamma}a_\gamma t^\gamma$, we will interchangeably use the notations
\begin{align*}
x=\sum_{\gamma\ge\gamma_0}a_\gamma t^{\gamma}, \hspace{1.5em} x=\sum_{i\in I}a_it^{\gamma_i}
\end{align*}
where $\gamma_0$ is minimal such that $a_{\gamma_0}\neq 0$ and $I$ is a well-ordered index set. Throughout, we will assume that $t\in\F((t^\Gamma))$. More precisely, this amounts to choosing a positive element $\gamma\in\Gamma$ and defining $t=t^{\gamma}$. Just as fields of formal Laurent series, $\F((t^\Gamma))$ admits a natural valuation $v$ by setting
$$v\left(\sum_{\gamma\ge\gamma_0}a_\gamma t^\gamma\right)=\gamma_0.$$
Under this valuation, $\F$ is the residue field and $\Gamma$ is the value group.

A standard result (see for example Theorem 1 in \cite{MR1225257}) is that $\F((t^\Gamma))$ is maximal. It follows that $\F((t^\Gamma))$ algebraically closed if $\F$ is algebraically closed and $\Gamma$ is divisible \cite[Corollary 4]{MR1225257}.  If $\F$ is algebraically closed and $\Gamma$ is divisible and non-trivial, then $\F((t^\Gamma))$ is universal, i.e.\ any field with the same cardinality and same characteristic embeds as a subfield in $\F((t^\Gamma))$ \cite{MR610}. More useful for us will be to consider algebraic extensions of $\F(t)$ inside the Hahn field $\F((t^\Gamma))$ as follows.

\begin{lemma}
\label{lem: rel alg closure in Hahn field}
Let $\F$ be a field of characteristic $p$ and let $\Gamma$ be a non-trivial ordered abelian group. Let $K$ be the relative algebraic closure of $\Fp(t)$ in $\F((t^\Gamma))$. Then $K$ is contained in $Kv((t^{vK}))$. 
\end{lemma}

\begin{proof}
Let $\Gamma'$ be the divisible hull of $\Gamma$. We can view $\Q$ as a subgroup of $\Gamma'$, by using the general assumption that that $t\in\F((t^\Gamma))$ and identifying $\Q$ with the divisible hull of $\langle v(t)\rangle$ in $\Gamma'$. Consider $\FpbarQ$ and $\F((t^\Gamma))$ as subfields of the Hahn field $\Fbar((t^{\Gamma'}))$. Since $\FpbarQ$ is algebraically closed and contains $\Fp(t)$, we have that $K$ is a subfield of
$$\FpbarQ\cap \F((t^\Gamma))=E((t^G)),$$
where $E=\Fpbar\cap\F$ and $G$ is the relative divisible hull of $v(t)$ in $\Gamma$. In particular, $Kv$ is contained in $E$, since $Kv$ is algebraic over $\Fp$. On the other hand, since $E$ is an algebraic extension of $\Fp$ inside $\F$, we also have that $E$ is contained in $Kv$. We thus get that $E=Kv$ and the result follows since $vK=G$.
\end{proof}

It was noted by Abhyankar \cite{MR80647} that generalised power series with coefficients in $\F_p$ and exponents in $\frac{1}{p^\infty}\Z$ arise naturally as root to the Artin-Schreier polynomial
$$f(X)=X^p-X-1/t\in\F_p(t).$$
Indeed, by linearity of the Frobenius, we have that
$$x=\sum_{n\ge 1}t^{-1/p^n}$$
is a root of $f$. We get all roots of $f$ by adding to $x$ elements of $\F_p$. It is tempting to see $x$ as some kind of limit to the sequence 
$$\left\{\sum_{i=1}^n t^{-1/p^i}\right\}$$
in $\F_p((t))^{1/p^\infty}$, the perfect hull of $\F_p((t))$. In this sense, $\PPFp$ could be seen as a completion to $\F_p((t))^{1/p^\infty}$. However, uniqueness fails since the sequence is not convergent and any element $x+y$ with $v(y)\ge 0$ could be seen as a limit. For this to make sense, we need a weaker notion of convergence, that of a pseudo-convergence, which will be made more precise in Section \ref{sec: kaplansky}. 

As we will see, the question of decidability for $\PPFp$ and similar fields reduces to the question of finding a decision procedure for the existence of roots to one variable polynomials over $\F_p(t)$. Such a procedure already exists for $\Fp((t))\perf$ by standard valuation theory. More precisely, any root to a polynomial $f\in\Fp(t)[X]$ in $\Fp((t))\perf$ is in $\Fp((t^{1/p^n}))$, where $p^n\ge\deg(f)$. Determining if $\Fp((t^{1/p^n}))$ has a root of $f$ can be done for example by the recursion procedure defined in \cite{lisinski2} and using Krasner's Lemma. Since $\PPFp$ is a maximal immediate extension of $\Fp((t))\perf$, finding a decision procedure for $\PPFp$ amounts to determining which immediate extensions of $\Fp((t))\perf$ lie inside $\PPFp$, given the reduction to one variable polynomials over $\Fp[t]$. The following result, appearing as Corollary 5.10 in \cite{Ku16}, shows that passing from $\Fp((t))\perf$ to its completion does not give more information about immediate extensions inside $\PPFp$.

\begin{theorem}
\label{thm: henselian existentially closed}
 Let $(K,v)$ be a henselian valued field $(K, v)$ and let $\widehat{K}$ be its completion, as described in Section \ref{sec: valued fields}. Then $(K,v)$ is existentially closed in its completion $(\widehat{K},v)$ if
and only if the extension $\widehat{K}/ K$ is separable.
\end{theorem}

\subsection{Kaplansky fields}
\label{sec: kaplansky}
While $\PPFp$ can be seen as a sort of completion of $\F_p((t))\perf$, in the sense that for example the sequence 
$$\left(\sum_{i\le n}t^{-1/p^i}\right)_{n\ge 1}$$
is completed by the element $\sum_{n\ge 1}t^{-1/p^n}\in\PPFp$, this completion is not unique. In the current section, we will describe this notion of completion by overviewing the work by Kaplansky in
\cite{kaplansky1942}. We will see that this will also give a deeper understanding of immediate extensions of valued fields. All results without proofs in this section are due to Kaplansky.
\begin{definition}
Let $(K, v)$ be a valued field and let $\{a_\rho\}_{\rho\in I}$ be a set of
elements of $K$ where the index set $I$ is well-ordered without a last element. Then $\{a_\rho\}$ is said to be \textbf{pseudo-convergent} if for all $\rho<\sigma<\tau$, we have
$v(a_\sigma - a_\rho) < v(a_\tau - a_\sigma)$.
\end{definition}

\begin{lemma}[{\cite[Lemma 1]{kaplansky1942}}]
\label{lem: pc stabilising or increasing}
Let $\{a_\rho\}$ be a pseudo-convergent sequence. Then one of the following holds.
 \begin{enumerate}
     \item $v(a_\rho) = v(a_\sigma)$ for all large enough $\sigma$ and $\rho$;
     \item $v(a_\rho) < v(a_\sigma)$ for all $\rho < \sigma$.
 \end{enumerate}
\end{lemma}

\begin{lemma}[{\cite[Lemma 2]{kaplansky1942}}]
\label{lem: difference stable}
Let $\{a_\rho\}$ be a pseudo-convergent sequence. Then
$$v(a_\sigma - a_\rho) = v(a_{\rho+1} - a_\rho)$$
for all $\sigma > \rho$.
\end{lemma}

By Lemma \ref{lem: difference stable}, we can write $\gamma_\rho := v(a_\sigma - a_\rho)$ for any $\sigma > \rho$. This gives us the following
definition.

\begin{definition}
We say that $\xi$ is a limit of a pseudo-convergent sequence $\{a_\rho\}$ if
$v(\xi - a_\rho) = \gamma_\rho$ for all $\rho$.
\end{definition}

The following important result establishes the first connection between pseudo-convergent
sequences and immediate extensions.

\begin{theorem}[{\cite[Theorem 1]{kaplansky1942}}]
\label{thm: immediate is pc limit}
Let $(L, v)$ be an extension of $(K, v)$. If $(L,v)/(K,v)$ is an immediate extension, then any element
in $L \setminus K$ is a limit of a pseudo-convergent sequence of elements in $K$, without a limit in $K$.
\end{theorem}

\begin{lemma}[{\cite[Lemma 5]{kaplansky1942}}]
\label{lem: pc type}
Let $\{a_\rho\}$ be a pseudo-convergent sequence in $(K, v)$ and let $f(X) \in
K[X]$ be a non-constant polynomial. Then for some sufficiently large $\lambda$, we have that $\{f(a_\rho)\}_{\rho>\lambda}$ is a pseudo-convergent sequence. In particular, one of the following holds.
\begin{enumerate}
\item $vf(a_\rho) = vf(a_\sigma)$ for all sufficiently large $\rho$ and $\sigma$;
\item $vf(a_\rho) < vf(a_\sigma)$ for all sufficiently large $\rho$ and $\sigma$, with $\rho < \sigma$.
\end{enumerate}
\end{lemma}

\begin{definition}
A pseudo-convergent sequence $\{a_\rho\}$ is said to be of \textbf{algebraic type}
if there is a polynomial $f(X) \in K[X]$ such that (2) in Lemma \ref{lem: pc type} holds for $f$. If $\{a_\rho\}$ is not of algebraic type, it is said to be of \textbf{transcendental type}.
\end{definition}

We can associate a kind of minimal polynomial $q(X) \in K[X]$ to a pseudo-convergent
sequence of algebraic type by letting $q$ be of minimal degree such that (2) in Lemma \ref{lem: pc type} holds for $q$.

\begin{theorem}[{\cite[Theorem 2 and Theorem 3]{kaplansky1942}}]
\label{thm: pc type}
Let $\{a_\rho\}$ be a pseudo-convergent sequence without a limit in $K$. Then the following hold.
\begin{enumerate}
    \item If $\{a_\rho\}$ is of transcendental type, then there is an immediate transcendental extension $K(z)$ of $K$ such that $z$ is a limit of $\{a_\rho\}$. Furthermore, if $K(u)$ is an immediate extension of $K$ such that $u$ is a limit of $\{a_\rho\}$, then $K(u)$ is isomorphic to $K(z)$ over $K$.
    \item If $\{a_\rho\}$ is of algebraic type and $q(x)$ is a minimal polynomial of $\{a_\rho\}$, in the sense described above, then there is an immediate algebraic extension $K(z)$ of $K$ such that $z$ is a limit of $\{a_\rho\}$ and $q(z) = 0$. Furthermore, if $K(u)$ is an immediate extension of $K$ such that $u$ is a limit of $\{a_\rho\}$ and $q(u) = 0$, then $K(u)$ is isomorphic to $K(z)$ over $K$.
\end{enumerate}
\end{theorem}

\begin{remark}
Note that Theorem \ref{thm: pc type} does not say that the limit of a pseudo-convergent sequence of algebraic type necessarily defines an algebraic extension. Indeed, consider the sequence $\sum_{i=1}^n t^{-1/p^i}$ over $\F_p((t))\perf$. This is a pseudo-convergent sequence with the limit $x=\sum_{n\ge 1} t^{-1/p^n}\in\PPFp$. Then, for any transcendental element $y\in\PPFp_{\ge 0}$, we have that $x+y$ is a transcendental pseudo-limit of the given sequence.
\end{remark}

For a valued field $(K, v)$ of equal characteristic $p$, the following two conditions are what Kaplansky calls \textbf{Hypothesis A} \cite{kaplansky1942}, which provides a criterion for understanding all the immediate extensions of $K$.
\begin{enumerate}
    \item Any non-zero additive polynomial $f \in Kv[X]$ is surjective on $Kv$.
    \item The value group $vK$ is $p$-divisible.
\end{enumerate}
A valued field satisfying Hypothesis A is also called a \textbf{Kaplansky field}. An immediate but important fact is that any immediate extension of a Kaplansky field is also a Kaplansky field. Furthermore, note that the conditions of Hypothesis A can be expressed as first order statements in $\Lring$ and $\Log$ respectively. Therefore, if $(L,v)$ is a Kaplansky field and $(F,w)$ is a valued field such that $Lv\equiv Fw$ and $vL\equiv wF$, we have that $(F,w)$ is also a Kaplansky field. The importance of Hypothesis A is captured in the following Theorem.

\begin{theorem}[{\cite[Theorem 5]{kaplansky1942}}]
\label{thm: unique maximal}
Let $(K, v)$ be a valued field of equal characteristic $p$ satisfying Hypothesis A. Then, $(K,v)$ admits a maximal immediate extension $(L,v)$ which is unique up to valuation preserving isomorphism over $K$.
\end{theorem}

We will mainly be interested in uniqueness of immediate algebraic algebraically maximal extensions. For a Kaplansky field, this situation is implicitly covered in the proof of Theorem \ref{thm: unique maximal}. The following result shows however that we do not need to assume hypothesis A, as long as the maximal immediate extension is unique.

\begin{lemma}
\label{lem: unique algebraically maximal}
Let $(E,v)$ be a valued field admitting a maximal immediate extension $(L,v)$ which is unique up to valuation preserving isomorphism over $E$. Then, the relative algebraic closure $(K,v)$ of $(E,v)$ in $(L,v)$ is the unique, up to valuation preserving isomorphism over $E$, algebraic extension of $(E,v)$ which is immediate and algebraically maximal.
\end{lemma}

\begin{proof}
Let $K'$ be an immediate algebraic algebraically maximal extensions of $E$. It is enough to show that $K$ is isomorphic to $K'$ as a valued field over $E$, since this implies that $K$ is also algebraically maximal. Let $L'$ be a maximal immediate extension of $K'$. Then $K'$ is the relative algebraic closure of $E$ in $L'$, since any extension of $K'$ inside $L'$ is immediate and since $K'$ is algebraically maximal. Furthermore, we have that $L$ and $L'$ are isomorphic as valued fields over $E$, by assumption on $E$. Denote this isomorphism by $\Phi$. For any polynomial $f\in E[X]$, we have that $a\in L$ is a root of $f$ if and only if $\Phi(a)\in L'$ is a root of $f$. Therefore $\Phi(K)=K'$ and we are done.
\end{proof}

The following result by Whaples gives an alternative useful characterisation of Kaplansky fields. Note that the original statement in \cite[Theorem 1]{whaples1957} refers to the residue field as a Kaplansky field, rather than the valued field itself.

\begin{theorem}
\label{thm: whaples}
A valued field $(K,v)$ is a Kaplansky field if and only if it satisfies the following.
\begin{enumerate}
    \item[1'.] $Kv$ has no algebraic extension of degree divisible by $p$.
    \item[2.] The value group $vK$ is $p$-divisible.
\end{enumerate}
\end{theorem}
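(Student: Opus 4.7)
The plan is to reduce to showing that, in characteristic $p$, conditions (1) and (1') on $k := Kv$ are equivalent, since condition (2) appears identically in both characterizations.

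For the direction $(1') \Rightarrow (1)$, I would first observe that (1') forces $k$ to be perfect: a root of $X^p - \alpha$ for $\alpha \in k$ generates an extension of degree dividing $p$, which by (1') must be trivial. Then, given a nonzero additive polynomial $P(X) = \sum_{i=j}^n a_i X^{p^i} \in k[X]$ with $a_j \neq 0$, I would write $P(X) = Q(X^{p^j})$ where $Q(Y) = \sum_{i=j}^n a_i Y^{p^{i-j}}$ is a separable additive polynomial (its linear coefficient $a_j$ is nonzero). Since $k$ is perfect, $x \mapsto x^{p^j}$ is a bijection on $k$, so surjectivity of $P$ on $k$ reduces to that of $Q$. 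For any $c \in k$, the polynomial $Q(Y) - c$ is separable of degree $p^{n-j}$, and any root $\beta \in \overline k$ generates an extension of degree dividing $p^{n-j}$; by (1') this degree must be $1$, so $\beta \in k$, giving surjectivity.

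For $(1) \Rightarrow (1')$, I would argue via Galois cohomology. Applying (1) to $X^p$ first shows $k$ is perfect. I would then suppose for contradiction that $k$ admits a finite extension of degree divisible by $p$, and by passing to the Galois closure obtain a finite Galois extension $\tilde L / k$ with $G := \operatorname{Gal}(\tilde L / k)$ of order divisible by $p$. Writing $G_k := \operatorname{Gal}(\overline k / k)$, I would use the normal basis theorem to embed the augmentation submodule $I = \{\sum_\sigma a_\sigma \sigma \in \F_p[G] : \sum_\sigma a_\sigma = 0\}$ as a $G$-stable finite $\F_p$-subspace $V \subseteq \tilde L \subseteq \overline k$, via $\sum a_\sigma \sigma \mapsto \sum a_\sigma \sigma(x)$ for a normal basis generator $x$ of $\tilde L$ over $k$. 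The polynomial $P_0(X) = \prod_{v \in V}(X - v) \in k[X]$ is then additive with $\ker P_0 = V$ in $\overline k$. By (1), $P_0$ is surjective on $k$, so taking $G_k$-cohomology of $0 \to V \to \overline k \xrightarrow{P_0} \overline k \to 0$ and invoking additive Hilbert 90 ($H^1(G_k, \overline k) = 0$) would force $H^1(G_k, V) = 0$. On the other hand, applying $G_k$-cohomology to $0 \to I \to \F_p[G] \to \F_p \to 0$ and observing that the $G$-invariant $\Sigma = \sum_\sigma \sigma$ maps to $|G| \equiv 0 \pmod p$, the connecting homomorphism would produce an injection $\F_p \hookrightarrow H^1(G_k, I) \cong H^1(G_k, V)$, contradicting vanishing.

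The main obstacle will be this second direction: bridging the gap between the hypothesis (1), a statement about surjectivity of polynomial maps $k \to k$, and the conclusion (1'), a statement about degrees of algebraic extensions of $k$. The cohomological argument above provides this bridge, with the key input being the use of the normal basis theorem to realise the regular representation $\F_p[G]$, and in particular its augmentation submodule, inside $\tilde L$ as a source of additive polynomials over $k$ whose cokernel on $k$ can be computed.
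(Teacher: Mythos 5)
The paper does not give a proof of this theorem; it simply cites Whaples \cite{whaples1957}. Your proof is therefore a from-scratch reconstruction rather than a reproduction of the paper's argument, and it must be judged on internal correctness.

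Your $(1)\Rightarrow(1')$ direction is correct and is a genuinely nice argument: realising the augmentation ideal $I\subset\F_p[G]$ inside $\tilde L$ via a normal basis generator, and pitting $H^1(G_k,\overline k)=0$ against the nonvanishing connecting map $\F_p\hookrightarrow H^1(G_k,I)$ coming from $\operatorname{aug}(\Sigma)=|G|\equiv 0\pmod p$. All the steps check out: the map $\F_p[G]\to\tilde L$ is injective since a normal basis is $k$-linearly (hence $\F_p$-linearly) independent, $I$ is $G$-stable, $P_0(X)=\prod_{v\in V}(X-v)\in k[X]$ is additive and separable with kernel exactly $V$, and surjectivity of $P_0$ on $k$ forces $H^1(G_k,V)=0$ via the usual Artin--Schreier-type sequence.

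The $(1')\Rightarrow(1)$ direction, however, contains a genuine gap. The sentence ``any root $\beta\in\overline k$ generates an extension of degree dividing $p^{n-j}$'' is false. The degree of $\beta$ over $k$ is the degree of an irreducible factor of $Q(Y)-c$, and irreducible factors of a separable additive polynomial need not have degree dividing, or even a power of $p$ dividing, the degree of the whole polynomial. For a concrete counterexample to the claimed divisibility, take $k=\F_2$ and $Q(Y)=Y^4+Y^2+Y=\prod_{v\in V}(Y-v)$, where $V=\{0,a,a^2,a^4\}\subset\F_8$ is the Galois-stable $\F_2$-plane spanned by a generator $a$ of $\F_8$ with $a^3=a+1$; then $Q(Y)-0$ has the root $a$ of degree $3$ over $\F_2$, and $3\nmid 4$. (This $k$ does not satisfy $(1')$, but it shows the divisibility fact you invoke is not a general property of additive polynomials, which is what your argument requires.) To repair the direction you should instead use that $(1')$ forces the absolute Galois group $G_k$ to be a pro-$p'$-group and then exploit vanishing of $H^1$ for $p$-torsion coefficients. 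Concretely: for $c\in k$ take a root $\beta$ of $Q(Y)-c$ and let $L$ be the splitting field over $k$; by $(1')$ the order $N=|\operatorname{Gal}(L/k)|$ is prime to $p$, so $N^{-1}\in\F_p$, and since $Q$ is $\F_p$-linear with coefficients in $k$ the element $\beta^*:=N^{-1}\operatorname{Tr}_{L/k}(\beta)\in k$ satisfies $Q(\beta^*)=N^{-1}\sum_{\sigma}\sigma(Q(\beta))=N^{-1}\cdot Nc=c$. This averaging is exactly the proof that $H^1$ of a $p'$-group with $p$-torsion coefficients vanishes, which is the cohomological input that actually bridges the gap; the degree-divisibility shortcut does not.
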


We will need the following results, which appear in Theorem 1.1 and Proposition 1.2 in \cite{kuhlmann2018subfields}.
\begin{theorem}
\label{thm: relative algebraic closure in maximal kaplansky}
 Let $(L,v)$ be an algebraically maximal Kaplansky field. Then for any subfield $K$ of $L$, we have that $L$ contains an algebraically maximal immediate extension of $K$. Furthermore, if $K$ is a relatively algebraically closed subfield of $L$, then $(K, v)$ is also an algebraically maximal Kaplansky field.
\end{theorem}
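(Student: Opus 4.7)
The plan splits along the two assertions. For the existence of a maximal immediate extension of $K$ inside $L$, I would apply Zorn's lemma to the poset of intermediate fields $K \subseteq M \subseteq L$ with $M/K$ immediate (unions of chains remain immediate), obtaining a maximal such $M$. I then claim $M$ admits no proper immediate extension: if $N \supsetneq M$ were one, then $N/K$ would also be immediate, so it would suffice to embed $N$ into $L$ over $M$. The embedding is constructed one pseudo-limit at a time: pseudo-Cauchy sequences in $M$ of transcendental type admit pseudo-limits in $L$ by general valued-field theory, while those of algebraic type admit pseudo-limits in $L$ by Kaplansky's hypothesis (which guarantees a pseudo-limit exists in some algebraic extension inside the algebraic closure of $L$) combined with the algebraic maximality of $L$ (which then places the limit inside $L$ itself).

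For the hereditary assertion, assume $K$ is relatively algebraically closed in $L$ and verify the three conditions. \emph{$vK$ is $p$-divisible.} First observe that $L$ is perfect in equal characteristic $p$: for $a \in L$, the extension $L(a^{1/p})$ has unchanged value group (since $vL$ is $p$-divisible) and unchanged residue field (since $Lv$ is perfect, a consequence of having no extensions of degree $p$ in characteristic $p$), so it is immediate and hence trivial by algebraic maximality. Therefore $x^{1/p} \in L$ for every $x \in K$, and relative algebraic closure forces $x^{1/p} \in K$, giving $v(x)/p \in vK$. \emph{$Kv$ has no algebraic extension of degree divisible by $p$.} Hensel's lemma (available since $L$ is henselian) together with relative algebraic closure imply that $Kv$ is relatively algebraically closed in $Lv$: lift the minimal polynomial over $Kv$ of a residue-algebraic element to $K[X]$, apply Hensel to obtain a root $a' \in L$ with the given residue, then $a' \in K$ by assumption. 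If now $F/Kv$ has degree $p$ with generator $\alpha$ and minimal polynomial $m$, the composite $Lv(\alpha)/Lv$ has degree exactly $p$ — a strictly smaller degree would make the minimal polynomial of $\alpha$ over $Lv$ a proper factor of $m$ whose coefficients lie in $Kv$ (by what we just proved), contradicting irreducibility — and this contradicts the Kaplansky property of $L$.

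\emph{$K$ is algebraically maximal.} If $K(a)/K$ is a proper immediate algebraic extension, then $a$ is a pseudo-limit of some algebraic-type pseudo-Cauchy sequence in $K$. As in the embedding step above, this sequence has a pseudo-limit $a' \in L$; since $a'$ is algebraic over $K$, relative algebraic closure puts $a' \in K$. The uniqueness of algebraic-type immediate extensions generated by a given pseudo-Cauchy sequence in a Kaplansky setting then yields $K(a) \cong_K K(a') = K$, contradicting properness.

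The main obstacle I anticipate is the embedding step in the first part: one must show that every pseudo-Cauchy sequence of algebraic type in $M$ has a pseudo-limit inside $L$, not merely in some abstract immediate extension. This is the exact junction at which Kaplansky's conditions and algebraic maximality interact, and it is the technical crux underlying both assertions of the theorem.
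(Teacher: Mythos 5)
The paper does not prove this theorem; it cites Theorem~1.1 and Proposition~1.2 of Kuhlmann's \emph{kuhlmann2018subfields} and uses the statement as a black box. So there is no internal proof to compare yours against, and I will assess your proposal on its own merits.

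You have correctly identified the technical crux, and the gap there is genuine, not merely cosmetic. Your Zorn argument yields a field $M$ with $K\subseteq M\subseteq L$ that is maximal among immediate extensions of $K$ contained in $L$, and you then want to conclude that $M$ has no proper immediate extension at all. For that you claim that every transcendental-type pseudo-Cauchy sequence in $M$ has a pseudo-limit in $L$ ``by general valued-field theory.'' This is not a general fact. A transcendental-type sequence over $M$ may remain of transcendental type over $L$, and an algebraically maximal field can perfectly well lack pseudo-limits for such sequences --- algebraic maximality only concerns \emph{algebraic} immediate extensions. Indeed, take $L$ to be the relative algebraic closure of $\F_p(t)$ in $\Fpbar((t^{1/p^\infty}))$: by the second assertion of the very theorem you are proving, $L$ is an algebraically maximal Kaplansky field, yet it is countable, whereas any maximal immediate extension of $\F_p(t)$ (and hence of $L$) is uncountable, so $L$ cannot be maximal. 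Applying the first assertion to $K=L$ would then force $L$ to contain a proper extension of itself, which is impossible on cardinality grounds. So the first assertion as displayed in the paper appears to need $(L,v)$ to be \emph{maximal} (or the conclusion has to be read differently); your instinct that the embedding step is where things could go wrong is exactly right, and under the hypothesis as written there is no way to supply the missing transcendental pseudo-limits.

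The second assertion is in much better shape and, crucially, only involves algebraic-type pseudo-Cauchy sequences, where your use of Kaplansky's Hypothesis~A together with algebraic maximality of $L$ is the right mechanism. A few points to tighten: your argument that $vK$ is $p$-divisible via perfectness of $L$ presupposes $\ch L=p$, whereas Kaplansky fields may have residue characteristic $p$ and equal characteristic $0$; in mixed characteristic one needs a separate argument (henselianity plus $p$-divisibility of $vL$ and the absence of degree-$p$ residue extensions). Whaples' condition asks that $Kv$ have no extension of degree \emph{divisible by} $p$, and your argument treats only degree exactly $p$; it does extend, because relative algebraic closure of $Kv$ in $Lv$ forces $[Lv(\alpha):Lv]=[Kv(\alpha):Kv]$ for \emph{any} $\alpha$ algebraic over $Kv$, but you should say this (and note that Whaples' condition also makes $Kv$ perfect, so the primitive element theorem is available). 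Finally, in the algebraic-maximality step, a pseudo-limit $a'\in L$ of an algebraic-type sequence over $K$ need not itself be algebraic over $K$; you should instead argue that the defining polynomial $f\in K[X]$ of the sequence remains irreducible over $L$ (its factors over $L$ have coefficients in $L\cap\overline{K}=K$), locate a root of $f$ in $L$ that is a pseudo-limit, and only then invoke relative algebraic closure.
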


\subsection{Tame fields}

When $\F$ is a perfect field and $\Gamma$ is a $p$-divisible ordered abelian group, then $\F((t^\Gamma))$ falls in a class of fields called tame fields. This class was studied extensively by Kuhlman in \cite{Ku16} and we follow this approach.

\begin{definition}
An algebraic extension $(L|K,v)$ of a henselian valued field $(K,v)$ is called \textbf{tame} if every finite subextension $E| K$ of $L| K$ satisfies the following conditions:
\begin{enumerate}
\item $(vE:vK)$ is prime to $p$
\item $Ev| Kv$ is separable
\item $E| K$ is defectless.
\end{enumerate}
A \textbf{tame field} is a henselian valued field for which all algebraic extensions are tame.
\end{definition}

That Hahn fields with perfect residue field and $p$-divisible value group are canonical examples of tame fields is, if not apparent from the definition, clear from the following alternative characterisations \cite[Theorem 3.2, Corollary 3.3]{Ku16}.

\begin{theorem}
\label{thm: hahn condition for tame}
A valued field $(K,v)$ is tame if and only if $(K,v)$ is algebraically maximal, $Kv$ is perfect, and $vK$ is $p$-divisible. If $(K,v)$ has characteristic $(p,p)$, then $(K,v)$ is tame if and only if it is algebraically maximal and perfect.
\end{theorem}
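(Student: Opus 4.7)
The plan is to prove the two directions of the equivalence separately, using the fundamental inequality $[E:K]\ge (vE:vK)[Ev:Kv]$ together with the standard algebra of value groups and residue fields.

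For the forward direction, suppose $(K,v)$ is tame. A nontrivial finite immediate algebraic extension $E| K$ would have $(vE:vK)=[Ev:Kv]=1$ while $[E:K]>1$, contradicting defectlessness, so $K$ is algebraically maximal. If $\gamma\in vK\setminus p\cdot vK$, adjoining a $p$-th root of any $a\in K$ with $v(a)=\gamma$ gives a finite extension with ramification index divisible by $p$, violating condition (1) of tameness. If $\bar b\in Kv$ lacks a $p$-th root, lifting to $b\in K_{\ge 0}$ and adjoining $b^{1/p}$ produces a purely inseparable residue extension, violating condition (2).

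For the reverse direction, assume $(K,v)$ is henselian, algebraically maximal, $Kv$ is perfect, and $vK$ is $p$-divisible. The critical step, and the technical heart of Kuhlmann's theory, is to upgrade algebraic maximality to full defectlessness: any finite extension $E| K$ with $[E:K]>(vE:vK)[Ev:Kv]$ is shown to contain a nontrivial immediate algebraic subextension, constructed via henselizations of intermediate fields together with the fundamental inequality, contradicting algebraic maximality. Granted defectlessness, the tameness conditions are immediate. The index $(vE:vK)$ cannot be divisible by $p$: any $\gamma\in vE$ with $p\gamma\in vK$ would, by $p$-divisibility of $vK$, satisfy $p\gamma=p\delta$ for some $\delta\in vK$, and torsion-freeness of $vE$ then forces $\gamma=\delta\in vK$. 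Separability of $Ev| Kv$ is automatic because $Kv$ is perfect.

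For the final assertion, assume in addition $\ch K=p$. If $K$ is perfect then automatically $vK=v(K^p)=p\cdot vK$ and $Kv=(Kv)^p$. Conversely, assuming the three conditions on $(K,v)$, each tower $K^{1/p^n}| K$ is algebraic and immediate: $p$-divisibility of $vK$ gives $v(K^{1/p^n})=vK$, and perfectness of $Kv$ gives $(K^{1/p^n})v=Kv$. Algebraic maximality collapses every finite subextension, so $K=K^{1/p^\infty}$ is perfect. The main obstacle throughout is the defectlessness step in the converse; the remaining implications are essentially routine calculations on value groups, residue fields, and lifts of $p$-th roots.
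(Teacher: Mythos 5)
The paper does not prove this theorem; it is imported verbatim from Kuhlmann (Theorem~3.2 and Corollary~3.3 of \cite{Ku16}), so there is no in-paper proof against which to compare your argument step by step. Judged on its own terms, your forward direction is correct (and is the standard computation: an immediate extension violates defectlessness; a $p$-th root of an element of non-$p$-divisible value forces $p\mid(vE:vK)$; lifting a non-$p$-th-power in the residue field yields an inseparable residue extension), and so is the final paragraph deriving perfectness of $K$ from the tower $K^{1/p^n}\mid K$ being immediate.

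The gap is precisely where you flag it, but it is a real gap, not a routine one, because the principle you invoke is false. You claim that a finite defective extension $E\mid K$ must contain a nontrivial \emph{immediate} subextension over $K$, so that algebraic maximality finishes the job. For a Galois $E\mid K$ under these hypotheses, what is true is that all the defect is concentrated in the top layer $E\mid E^r$ above the ramification field: $(vE:vK)$ is prime to $p$ because $vE/vK$ is $p$-torsion free (your own quotient argument), $Ev\mid Kv$ is separable since $Kv$ is perfect, and hence $E^r\mid K$ is tame and $E\mid E^r$ is immediate of degree equal to the defect. But that is an immediate extension \emph{of $E^r$}, not of $K$, and it descends to an immediate subextension of $K$ only when the sequence $1\to\Gal(E\mid E^r)\to\Gal(E\mid K)\to\Gal(E^r\mid K)\to 1$ splits. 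A cyclic $\Gal(E\mid K)\cong\Z/p^2\Z$ with ramification subgroup of order $p$ and one residue step of degree $p$ has defect $p$ and no nontrivial subextension that is immediate over $K$. So ``defective implies an immediate subextension, use the fundamental inequality and henselisations'' does not close the argument; Kuhlmann's proof descends to defective \emph{Galois extensions of degree $p$} (Artin--Schreier or Kummer) via a separate reduction, and only those are automatically immediate under the hypotheses. You need either to cite that machinery or to reproduce the reduction, since it is the entire content of the converse direction.
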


Given Theorem \ref{thm: hahn condition for tame} and Kaplansky's characterisation of immediate extensions, the following result is clear for tame Hahn fields of equal characteristic. For the general situation, see \cite[Lemma 3.7]{Ku16}

\begin{lemma}
\label{lem: general tame rel condition}
Let $(L,v)$ be a tame field and let $(K,v)\subset (L,v)$ be a relatively algebraically closed subfield. Suppose that $Lv|Kv$ is an algebraic extension. Then $(K,v)$ is a tame field, $vL/vK$ is torsion free, and $Kv=Lv$.
\end{lemma}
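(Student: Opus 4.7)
The plan is to verify the conclusions in the following order: first that $(K,v)$ is henselian and algebraically maximal, then that $Lv=Kv$, then torsion-freeness of $vL/vK$, at which point Theorem~\ref{thm: hahn condition for tame} delivers tameness (using that $Kv=Lv$ is perfect by tameness of $L$ and that $vK$ inherits $p$-divisibility from $vL$ thanks to torsion-freeness of $vL/vK$). For henselianity, the universal property of the henselisation embeds $K^h$ into $L$ over $K$; its image is algebraic over $K$ inside $L$, hence equal to $K$, so $K=K^h$. For algebraic maximality, a proper immediate algebraic extension of $K$ would, by Kaplansky's theory, produce a pseudo-Cauchy sequence $(a_\nu)$ in $K$ of algebraic type without a limit in $K$. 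Since $L$ is algebraically maximal (by tameness), $L$ contains a limit $\beta$ of this sequence, which remains of algebraic type in $L$ via the same witnessing polynomial; the standard theory makes $\beta$ algebraic over $K$, so $\beta\in K$ by relative algebraic closedness, contradicting the absence of a limit in $K$.

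To prove $Lv=Kv$, I take $\bar a\in Lv$ and decompose its minimal polynomial over $Kv$ as $\bar g(X^{p^e})$ with $\bar g$ separable. Lifting $\bar g$ to $g\in K[X]$ of equal degree and applying Hensel's lemma in $L$ at any lift of $\bar a^{p^e}$ yields a root of $g$ in $L$, algebraic over $K$ and hence in $K$ by relative algebraic closedness, so $\bar a^{p^e}\in Kv$. Induction on $e$ reduces the problem to the case $\bar a^p=\bar c$ with $c\in K$ a lift. Consider $L(c^{1/p})/L$: if this is a proper extension it has degree $p$, and tameness of $L$ forces it to be defectless with $(vE:vL)=1$ coprime to $p$ and $[Ev:Lv]=p$ separable. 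Writing $u=c^{1/p}/\pi$ for some $\pi\in L$ of valuation $v(c)/p$ (using $p$-divisibility of $vL$), the relation $u^p=c/\pi^p=:c'\in L$ together with perfection of $Lv$ forces $\bar u\in Lv$ (since $X^p-\bar{c'}=(X-\bar\rho)^p$ in $Lv[X]$), lifting to some $\rho\in L$. Iterating the replacement $u\mapsto (u-\rho)/\pi_1$ with $\pi_1\in L$ of valuation $v(u-\rho)$ produces a pseudo-Cauchy sequence $(s_n)$ in $L$ whose $p$-th powers are pseudo-convergent to $c'$, which is of algebraic type (witnessed by $X^p-c'$); algebraic maximality of $L$ supplies a limit $s\in L$ with $s^p=c'$, whence $s\pi\in L$ satisfies $(s\pi)^p=c$, contradicting $c^{1/p}\notin L$. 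Hence $c^{1/p}\in L$, and relative algebraic closedness places it in $K$, so $\bar a=\overline{c^{1/p}}\in Kv$.

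For torsion-freeness of $vL/vK$, I suppose $n\gamma\in vK$ with $\gamma\in vL$ and $n$ prime. If $n\neq p$, I pick $a\in K$ with $v(a)=n\gamma$ and $b\in L$ with $v(b)=\gamma$, so $b^n/a$ has valuation zero with residue in $Lv=Kv$, lifting to some $u\in K$. Hensel's lemma applied to $X^n-b^n/(au)\in L[X]$, whose reduction $X^n-1$ has $1$ as a simple root because $n$ is coprime to the residue characteristic, produces $\delta\in L$ with $\delta^n=b^n/(au)$; then $(b/\delta)^n=au\in K$ makes $b/\delta$ algebraic over $K$, hence in $K$ by relative algebraic closedness, yielding $\gamma=v(b/\delta)\in vK$. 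If $n=p$, the tameness argument of the previous paragraph applied to $X^p-a$ shows $a^{1/p}\in L$, hence in $K$, so $\gamma=v(a^{1/p})\in vK$. The main obstacle I anticipate is the iteration in the tameness argument for $L(c^{1/p})/L$: turning the forced triviality of $\bar u\in Lv$ into a bona fide pseudo-Cauchy approximation of $c^{1/p}$ inside $L$ requires delicate tracking of dominant terms and $p$-th powers in both equal and mixed characteristic, and it is here that algebraic maximality of $L$ does essential work beyond henselianity.
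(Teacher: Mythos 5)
The paper itself gives no proof of this lemma --- it is cited verbatim as Lemma~3.7 of Kuhlmann's tame fields paper --- so I evaluate your argument on its own merits. The step that fails is the proof of algebraic maximality of $(K,v)$, specifically the assertion that ``the standard theory makes $\beta$ algebraic over $K$.'' It does not: a pseudolimit $\beta\in L$ of a pseudo-Cauchy sequence $(a_\nu)\subset K$ of algebraic type over $K$ need not be algebraic over $K$. If $f\in K[X]$ is the minimal-degree witness and $v(f(a_\nu))$ is bounded above in $vL$, then no root of $f$ need be a pseudolimit, and the pseudolimit supplied by algebraic maximality of $L$ can be transcendental over $K$. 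This is exactly what happens in Example~\ref{ex: counterexample to tameness} of this paper: there $K$ is the relative algebraic closure of $\F_p(t)$ in an algebraically maximal $L$, the relevant sequence is $a_n=\sum_{k=1}^n t^{-1/p^k}$ with witness $f=X^p-X-t^{-1}$ (so $v(f(a_n))=-1/p^n$, bounded above by $0$), $L$ contains the pseudolimit $\beta'=\alpha+\sum_{n\geq 1}t^{-1/p^n}$, but $\beta'$ is transcendental over $K$ since its Artin--Schreier equation involves $s^{-1}\notin K$, and no root of $f$ lies in $L$ at all. Hence $K$ is not algebraically maximal even though $L$ is. Your argument for algebraic maximality never invokes the hypothesis $Lv|Kv$ algebraic, which is precisely what rules out this situation; accordingly the proposed ordering --- henselian, then algebraically maximal, then $Kv=Lv$ --- cannot work. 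One must establish $Kv=Lv$ and torsion-freeness of $vL/vK$ first (this is where the residue hypothesis and the tameness of $L$, not merely its algebraic maximality, do their work), and only then derive defectlessness of $K$, for instance by comparing a putative defect extension $K'|K$ with the tame extension $K'L|L$.

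A secondary, non-fatal point: in your argument for $Kv=Lv$, the sub-case $\bar{a}^p=\bar{c}$ is handled with an elaborate iteration in $L(c^{1/p})|L$, but in equal characteristic $p$ a tame field is perfect (Theorem~\ref{thm: hahn condition for tame}), so $c^{1/p}\in L$ automatically and then $c^{1/p}\in K$ by relative algebraic closedness; the pseudo-Cauchy iteration you flag as the delicate point is unnecessary there, and in mixed characteristic the object $c^{1/p}$ is not the right thing to consider in the first place.
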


For our purposes we note that the hypothesis in Lemma \ref{lem: general tame rel condition} can be weakened when working with Hahn fields, to obtain a similar result.

\begin{lemma}
\label{lem: hahn tame rel condition}
Let $\F$ be a perfect field of characteristic $p$ and let $\Gamma$ be a $p$-divisible value group. Let $(K,v)$ be the relative algebraic closure of $\F_p(t)$ in  $\F((t^\Gamma))$. Then $(K,v)$ is a tame field and $\Gamma/vK$ is torsion free.
\end{lemma}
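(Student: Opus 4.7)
The plan is to verify the hypotheses of Theorem \ref{thm: hahn condition for tame} for $(K,v)$, namely that it is algebraically maximal and perfect. Set $L = \F((t^\Gamma))$. Because $\F$ is perfect of characteristic $p$ and $\Gamma$ is $p$-divisible, Theorem \ref{thm: hahn condition for tame} applied to the maximal field $L$ already tells us that $L$ is itself tame, and in particular an algebraically maximal Kaplansky field.

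Since $K$ is by construction relatively algebraically closed in $L$, Theorem \ref{thm: relative algebraic closure in maximal kaplansky} immediately yields that $(K,v)$ is an algebraically maximal Kaplansky field. To obtain perfectness of $K$, I would first note that $L$ is perfect: any $x = \sum_\gamma a_\gamma t^\gamma \in L$ admits the candidate $p$-th root $\sum_\gamma a_\gamma^{1/p} t^{\gamma/p}$, whose support is again well-ordered since multiplication by $1/p$ on $\Gamma$ is order-preserving and $\F$ is perfect. The perfect hull $K^{1/p^\infty}$ is then a purely inseparable algebraic extension of $K$ sitting inside the perfect field $L$, so relative algebraic closedness forces $K^{1/p^\infty} = K$. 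A second appeal to Theorem \ref{thm: hahn condition for tame} now shows that $(K,v)$ is tame.

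For the torsion-freeness of $vL/vK = \Gamma/vK$, I would invoke Lemma \ref{lem: rel alg closure in Hahn field}, which identifies $vK$ with the relative divisible hull $G = \{\gamma\in\Gamma \, : \, n\gamma \in \Z v(t) \text{ for some } n \geq 1\}$ of $v(t)$ inside $\Gamma$. If $\gamma \in \Gamma$ satisfies $n\gamma \in vK = G$ for some positive integer $n$, then a further multiple of $\gamma$ lies in $\Z v(t)$, so $\gamma \in G = vK$ already, proving that the quotient is torsion-free.

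No single step looks genuinely hard. The main trap I would want to sidestep is the temptation to quote Lemma \ref{lem: general tame rel condition} directly: that lemma requires $Lv \, | \, Kv$ to be algebraic, but here $Lv = \F$ while the proof of Lemma \ref{lem: rel alg closure in Hahn field} pins down $Kv = \Fpbar \cap \F$, and this extension need not be algebraic when $\F$ is transcendental over $\F_p$. This is precisely why the Hahn-field version is worth stating separately, and why the argument should be routed through Theorems \ref{thm: relative algebraic closure in maximal kaplansky} and \ref{thm: hahn condition for tame} instead.
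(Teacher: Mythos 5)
Your proof has a genuine gap at the very first step. You assert that, because $L=\F((t^\Gamma))$ is a tame field, it is "in particular an algebraically maximal Kaplansky field", and then you feed $L$ into Theorem \ref{thm: relative algebraic closure in maximal kaplansky}. But tame does \emph{not} imply Kaplansky. By Theorem \ref{thm: hahn condition for tame}, tameness of $L$ amounts to algebraic maximality together with $Lv$ \emph{perfect} and $vL$ $p$-divisible, whereas the Kaplansky condition (via Theorem \ref{thm: whaples}) requires that $Lv$ have \emph{no algebraic extension of degree divisible by $p$}, which is strictly stronger than perfect. For instance, take $\F=\F_p(s)^{1/p^\infty}$: this is perfect, so $\F((t^{1/p^\infty}))$ is tame, but $\F$ admits the Artin--Schreier extension given by $X^p-X-s$ and hence $X^p-X$ is not surjective on $\F$; the residue field violates Hypothesis A and $L$ is not Kaplansky. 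Thus Theorem \ref{thm: relative algebraic closure in maximal kaplansky} is not available here, and your proof never establishes that $(K,v)$ is algebraically maximal. The remaining steps (perfection of $K$ via relative algebraic closedness inside a perfect field, and torsion-freeness of $\Gamma/vK$ since $vK$ is the relative divisible hull of $v(t)$ in $\Gamma$) are fine, but without algebraic maximality you cannot invoke Theorem \ref{thm: hahn condition for tame} to conclude tameness.

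The irony is that your closing remark warns against Lemma \ref{lem: general tame rel condition} on the grounds that $Lv|Kv$ need not be algebraic, and that is precisely the tool the paper does use — it just does not apply it with ambient field $L=\F((t^\Gamma))$. By Lemma \ref{lem: rel alg closure in Hahn field}, $K$ is contained in $Kv((t^{vK}))$, which is again a Hahn field with perfect residue field and $p$-divisible value group, hence tame, and whose residue field is literally $Kv$, so the "algebraic extension" hypothesis of Lemma \ref{lem: general tame rel condition} holds trivially. Since $K$ is relatively algebraically closed in the larger field $\F((t^\Gamma))$, it is a fortiori relatively algebraically closed in the subfield $Kv((t^{vK}))$. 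Applying Lemma \ref{lem: general tame rel condition} to the inclusion $K\subset Kv((t^{vK}))$ then gives tameness of $(K,v)$ without any Kaplansky hypothesis. So the fix is not to avoid Lemma \ref{lem: general tame rel condition}, but to shrink the ambient field to the one Lemma \ref{lem: rel alg closure in Hahn field} hands you.
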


\begin{proof}
Since $vK$ is the relative divisible hull of $v(t)$ in $\Gamma$, we have that $\Gamma/vK$ is torsion free. By Lemma \ref{lem: rel alg closure in Hahn field}, we have that $K$ is relatively algebraically closed in $L=Kv((t^{vK}))$. By assumption on $F$ and $\Gamma$, we have that $Kv$ is perfect and $vK$ is $p$-divisible, so $L$ is tame. Since $Lv=Kv$, we are in the situation of Lemma \ref{lem: general tame rel condition} and $(K,v)$ is a tame field.
\end{proof}

One of the main results in \cite{Ku16} is that tame fields admit an Ax-Kochen Ershov principle in the language $\L_{\val}$. In particular, this implies the following.

\begin{theorem}[{\cite[Theorem 1.6]{Ku16}}]
\label{thm: decidability Lval}
Let $q$ be a power of a prime $p$ and let $\Gamma$ be an ordered abelian group which is divisible or elementarily equivalent to $\frac{1}{p^\infty}\Z$. Then $\F_q((t^\Gamma))$ equipped with the $t$-adic valuation is decidable in the language $\Lval$.
\end{theorem}
To mimic this result for $\L_t$, we need the following.

\begin{definition}
\label{def: rel subcompl}
Let $\mathcal{C}$ be an elementary class of tame fields in the language $\Lval$. If for every two fields $(L, v),(F, w) \in \mathcal{C}$ and every common defectless subfield $(K, v)$ of $(L, v)$ and $(F, w)$ such that $vL/vK$ is torsion free and $Lv| Kv$ is separable, the conditions $vL \equiv_{vK} wF$ and $Lv  \equiv_{Kv} F w$ imply that $(L, v)  \equiv_{(K,v)}(F, w)$, then we will call $\mathcal{C}$ \textbf{relatively subcomplete} (in the language $\Lval$).
\end{definition}

\begin{theorem}[Theorem 7.1 in \cite{Ku16}]
\label{thm: rel subcompl}
The class of tame fields is relatively subcomplete in the language $\L_{\val}$.
\end{theorem}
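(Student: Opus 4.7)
The plan is to prove this by a back-and-forth construction. Using Löwenheim-Skolem, I would first pass to sufficiently saturated elementary extensions of $(L,v)$ and $(F,w)$; the relevant first-order hypotheses---defectlessness of $K$, torsion-freeness of $vL/vK$, separability of $Lv|Kv$, and the two elementary equivalences $vL \equiv_{vK} wF$ and $Lv \equiv_{Kv} Fw$---all persist in any elementary extension. Then by Zorn's lemma I would select a maximal valuation-preserving $K$-embedding $\iota\colon (L_0, v) \hookrightarrow (F, w)$ with $K \subseteq L_0 \subseteq L$, such that $L_0$ enjoys the same structural properties relative to $K$ that $L$ did (defectlessness over $K$, torsion-free value-group extension, separable residue extension), and symmetrically for $\iota(L_0)$ inside $F$. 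The assertion then reduces to proving $L_0 = L$.

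Assume for contradiction that $a \in L \setminus L_0$. I would extend $\iota$ in one of four cases: (i) if $a$ produces a new residue, realise the type of $\bar a$ over $L_0 v$ inside $Fw$ using the residue-field equivalence and saturation, then lift back via henselianity to a matching element of $F$; (ii) if $a$ produces a new value, proceed analogously using the value-group equivalence; (iii) if $a$ is algebraic over $L_0$, then because the ambient $L$ is tame and hence defectless the extension $L_0(a)|L_0$ must enlarge $vL_0$ or $L_0 v$, reducing to cases (i) or (ii); (iv) if $a$ is transcendental and yields an immediate extension, realise the complete $\L_{\val}$-type of $a$ over $L_0$ in $F$ via saturation. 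At each step Lemma \ref{lem: general tame rel condition}, applied inside the tame field $L$ to the relative algebraic closure of the new $L_0(a)$, guarantees that the enlarged subfield still satisfies the inductive hypotheses, contradicting maximality.

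The main obstacle is the immediate transcendental case (iv), which amounts to matching pseudo-Cauchy behaviour across $\iota$: one must show that a pseudo-Cauchy sequence in $L_0$ acquiring a limit in $L$ corresponds under $\iota$ to one acquiring a limit in $F$. Kaplansky's classification of pseudo-Cauchy sequences into algebraic and transcendental types, combined with the fact that Hypothesis A is forced on the residue field and value group by tameness of the ambient fields, ensures that the transcendental type of such a pseudo-limit is determined by the images of its defining sequence, so that saturation of $F$ produces a realisation there. This is precisely where defectlessness, torsion-freeness and separability become essential: they allow the inductive setup to propagate to $L_0(a)$ via Lemma \ref{lem: general tame rel condition}, keeping the back-and-forth alive until $L_0$ exhausts $L$.
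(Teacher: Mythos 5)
This theorem is cited verbatim as Theorem 7.1 of Kuhlmann's \cite{Ku16}; the present paper contains no proof of it, so your sketch has to be compared with Kuhlmann's original argument rather than anything in this paper.

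Your back-and-forth skeleton --- pass to saturated models, pick a maximal valuation-preserving $K$-embedding $\iota\colon L_0\hookrightarrow F$, and extend it case by case by new residues, new values, algebraic elements, and immediate transcendentals --- does reflect the broad shape of Kuhlmann's proof. But there is a genuine gap exactly where you flag the difficulty. In case (iv) you assert that ``Hypothesis A is forced on the residue field and value group by tameness of the ambient fields.'' That is false. Tameness forces a perfect residue field and a $p$-divisible value group, but says nothing about closure of the residue field under extensions of degree divisible by $p$. The Hahn field $\PPFp$ is tame by Theorem \ref{thm: hahn condition for tame}, yet its residue field $\F_p$ admits an Artin--Schreier extension, so by Whaples' characterisation (Theorem \ref{thm: whaples}) it is not a Kaplansky field. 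This is not a peripheral point: it is precisely what your case (iv) relies on. Without Hypothesis A you lose Kaplansky's uniqueness of maximal immediate extensions, so a pseudo-Cauchy sequence in $L_0$ of algebraic type can acquire pseudo-limits that are not conjugate over $L_0$, and saturation of $F$ alone does not tell you which extension the image sequence in $\iota(L_0)$ should realise in order for the enlarged $\iota$ to remain valuation-preserving and to preserve the inductive hypotheses. Kuhlmann closes this gap with entirely different machinery that is absent from your outline: the Generalized Stability Theorem, which shows that valued function field extensions without transcendence defect over a defectless base remain defectless, together with a tame embedding lemma that lifts compatible embeddings of value group and residue field to embeddings of tame extensions. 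These substitute for Kaplansky uniqueness; without them, or a restriction to the Kaplansky subclass of tame fields (which would prove a strictly weaker statement), your immediate transcendental step does not go through.
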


\subsection{Finite automata}
\label{sec: finite automata}
In this section, we include some standard results on finite automata, as presented in \cite{MR1997038}. We also present key results in \cite{MR2289431}. The main findings of this paper rely on the existence of algorithms which outputs certain automata. The results are thus stated in this manner, i.e. by saying that there exists an algorithm that outputs an automaton with certain properties. While the referenced results are not stated explicitly like this, it follows from the proofs in the references that such algorithms do exist. When necessary, additional details are provided in Appendix \ref{appendix} to emphasise this.

We will need the following three kinds of finite automata.

\begin{definition}
\label{def: DFA}
A \textbf{deterministic finite automaton}, or a \textbf{DFA}, is a tuple $M=(Q,\Sigma,\delta,q_0,F)$ where
\begin{itemize}
    \item $Q$ is a finite set (the \textbf{states});
    \item $\Sigma$ is a finite set (the \textbf{input alphabet});
    \item $\delta$ is a function from $Q\times\Sigma$ to $Q$ (the \textbf{transition function});
    \item $q_0\in Q$ (the \textbf{initial state});
    \item $F$ is a subset of $Q$ (the \textbf{accepting states}).
\end{itemize}
\end{definition}

\begin{definition}
\label{def: DFAO}
A \textbf{deterministic finite automaton with output}, or a \textbf{DFAO}, is a tuple $M=(Q,\Sigma,\delta,q_0,\Delta,\tau)$ where
\begin{itemize}
    \item $Q$ is a finite set (the \textbf{states});
    \item $\Sigma$ is a finite set (the \textbf{input alphabet});
    \item $\delta$ is a function from $Q\times\Sigma$ to $Q$ (the \textbf{transition function});
    \item $q_0\in Q$ (the \textbf{initial state});
    \item $\Delta$ is a finite set (the \textbf{output alphabet});
    \item $\tau$ is a function from $Q$ to $\Delta$ (the \textbf{output function}).
\end{itemize}
\end{definition}

\begin{definition}
\label{def: NFA}
A \textbf{nondeterministic finite automaton}, or an \textbf{NFA}, is a tuple $M=(Q,\Sigma,\delta,q_0,F)$ where
\begin{itemize}
    \item $Q$ is a finite set (the \textbf{states});
    \item $\Sigma$ is a finite set (the \textbf{input alphabet});
    \item $\delta$ is a function from $Q\times\Sigma$ to the power set of $Q$ (the \textbf{transition function});
    \item $q_0\in Q$ (the \textbf{initial state});
    \item $F$ is a subset of $Q$ (the \textbf{accepting states}).
\end{itemize}
\end{definition}

If $M=(Q,\Sigma,\delta,q_0,\Delta,\tau)$ is a DFAO and $0\in \Delta$, we say that a state $q\in Q$ an \textbf{accepting state of $M$} if $\tau(q)\neq 0$. Thus, a DFA can be seen as a DFAO with output alphabet $\{0,1\}$.

We will consider families of DFAOs where the input alphabet $\Sigma$ and the output alphabet $\Delta$ are fixed. Denote the collection of such DFAOs by $\calD(\Sigma,\Delta)$. For any DFAO, we will identify its set of states $Q$ with the set $\{1,\ldots,\lvert Q\rvert\}$ if not specified otherwise. With this identification, we can view $\calD(\Sigma,\Delta)$ as a set. Furthermore, with the identifications
\begin{align*}
    s_1 & = \ ( \\
    s_2 & = \ \{ \\
    s_3 & = \ ,  \\
    s_4 & = \ \} \\
    s_5 & = \ )
\end{align*}
$\calD(\Sigma,\Delta)$ can be seen as a formal language $\L$ over the alphabet
$$\Xi=\left\{s_1,s_2,s_3,s_4,s_5\right\}\cup\N\cup \Sigma\cup \Delta.$$
This alphabet is recursively enumerable by an algorithm $e$, since the only infinite set in the union is $\N$. We can also choose $e$ such that it is a bijection as a function from $\Xi$ to $\N$, and such that its inverse is given by an algorithm $e^{-1}$. By this, $\Xi^*$ is recursively enumerable, using the encoding described in Section \ref{sec: alg lang mod}. 

For simplicity, we enforce any string in $\L$ to be without repetitions of the elements in the sets, and with elements of $Q$ in increasing order, i.e. on the form
$$\left(\{1,\ldots,i\},\{\Sigma_1,\ldots,\Sigma_j\},\{\delta_1,\ldots,\delta_k\},q_0,\{\Delta_1,\ldots,\Delta_\ell\},\{\tau_1,\ldots,\tau_m\}\right)$$
where $\lvert\{\{\Sigma_1,\ldots,\Sigma_j\}\rvert=j$, $\lvert \{\delta_1,\ldots,\delta_k\} \rvert = k$, $\{\Delta_1,\ldots,\Delta_\ell\}\rvert = \ell$, and $\lvert\{\tau_1,\ldots,\tau_m\}\rvert=m$. By the choice of Gödel numbering, the set $G=\{\encode(w) \ | \ w\in \L\}$ is a decidable subset of $\N$. We thus get that $\L$ is enumerated by an algorithm $\listdfao$ which on input $n$ returns the string which is encoded as the $n$:th natural number in $G$. Since any DFAO in $\calD(\Sigma,\Delta)$ is represented by a string in $\L$ we have that $\listdfao$ enumerates $\calD(\Sigma,\Delta)$.

For a finite field $\F_q$, where $q=p^n$ for some prime $p$ and some positive integer $n$, we will view $\F_q$ as an alphabet by fixing an irreducible polynomial $f$ of degree $n$ over $\F_p$. The alphabet $\F_q$ then consists of expressions of the form $a_0+a_1X+\cdots + a_{n-1}X^{n-1}$, where $k<n$ and $a_i\in\{0,\ldots,p-1\}$.

If $M$ is a DFAO with input alphabet $\Sigma$ and transition function $\delta$, we extend $\delta$ to the function
$$\delta^*:Q\times\Sigma^*\to Q$$
defined recursively by
\begin{align*}
    \delta^*(q,\emptyset) & =q \\
    \delta^*(q,wa) & =\delta(\delta^*(q,w),a)
\end{align*}
where $q\in Q$, $w\in\Sigma^*$ and $a\in \Sigma$. Furthermore, we let
$$f_M:\Sigma^*\to\Delta$$ be the function defined by
$$f_M(w)=\tau(\delta^*(q_0,w)).$$

Note that any DFA is both a DFAO and an NFA, and that the following two definitions indeed agree on DFAs.

\begin{definition}
\label{def: accepting}
Let $M=(Q,\Sigma,\delta,q_0,\Delta,\tau)$ be a DFAO. For a string $w\in\Sigma^*$, we say that \textbf{$M$ accepts $w$} if $f_M(w)\neq 0$. The set of strings in $\Sigma^*$ accepted by $M$ is called the \textbf{language accepted by $M$}. A language accepted by some DFA is called \textbf{regular}.
\end{definition}

\begin{definition}
\label{def: accepting path}
Let $N=(Q,\Sigma,\delta,q_0,F)$ be an NFA and let $w=s_1\cdots s_n\in\Sigma^*$. An \textbf{accepting path} for $w$ is a sequence of states $q_1,\ldots, q_n \in Q$ such that $q_i \in \delta(q_{i-1}, s_i)$ for $i \in\{1,\ldots, n\}$ and $q_n\in F$. We say that \textbf{$N$ accepts $w$} if there exists an accepting path for $w$ in $N$. The set of strings in $\Sigma^*$ accepted by $N$ is called the \textbf{language accepted by $N$}. 
\end{definition}

As models of computation, an NFA is equivalent to a DFA in the following sense (see for example Theorem 4.1.3 in \cite{MR1997038}).

\begin{theorem}
\label{thm: NFA DFA equiv}
There is an algorithm $\dfa$ which takes as input an NFA $M$ and outputs a DFA $M'$ such that $M$ and $M'$ accept the same language. 
\end{theorem}

\begin{definition}
\label{def: minimal DFA}
Let $M=(Q,\Sigma,\delta,q_0,F)$ be a DFA. We say that $M$ is \textbf{minimal} if there is no DFA with fewer states than $M$ accepting the same language as $M$.
\end{definition}

\begin{remark}
\label{rem: minimal unique}
There is an algorithm which takes as input a DFA $M$ and outputs a minimal DFA $M'$ which accepts the same language as $M$ and which is unique up to renaming the states. For details, see Section 4.4.3 in \cite{Hopcroft+Ullman/79/Introduction}. We will denote this algorithm by $\mindfa$.
\end{remark}

The Myhill-Nerode theorem, appearing for example as Theorem 4.1.8 in \cite{MR1997038}, provides a way verify if a given language is regular not.

\begin{definition}
\label{def: nerode equivalence}
Let $\L$ be a language over an alphabet $\Sigma$. We say that an equivalence relation $\sim$ on $\Sigma$ is \textbf{right-invariant} if $w\sim w'$ implies $wz\sim w'z$ for all $z\in\Sigma^*$. 
\end{definition}

\begin{theorem}[Myhill-Nerode theorem]
\label{thm: myhill-nerode}
Let $\L$ be a language over a finite alphabet $\Sigma$. Then $\L$ is regular if and only if there exists a right-invariant equivalence relation $\sim$ of finite index on $\Sigma^*$ such that $\L$ is the union of some of the equivalence classes of $\sim$.
\end{theorem}

The notion of minimality for DFAs does not extend directly to DFAOs, since all the properties of a DFAO are not captured by the language it accepts. We will instead use this less restrictive notion of minimality.

\begin{definition}
\label{def: minimal DFAO}
Let $M=(Q,\Sigma,\delta,q_0,\Delta,\tau)$ be a DFAO. We say that a state $q\in Q$ is \textbf{reachable} from a state $q_1$ if there is some string $s\in\Sigma^*$ such that $\delta^*(q_1,s)=q$. If $q$ is reachable from $q_0$, we say that $q$ is reachable. If $q$ is not reachable, we say that it is \textbf{unreachable}. If $M$ has no unreachable states, we say that $M$ is \textbf{minimal}.
\end{definition}

Since removing unreachable states does not change the accepted language, if $M$ is a minimal DFA, then it is also minimal as a DFAO.

\begin{remark}
\label{rem: bounded strings to reach}
If $M=(Q,\Sigma,\delta,q_0,\tau,\Delta)$ is a DFAO with set of states $Q$ and if $q'\in Q$ is reachable from $q\in Q$ by a string $w=s_1\ldots s_n$, then we can assume that $n\le \lvert Q\rvert$. Indeed, if $\delta^*(q,s_1\ldots s_i)=\delta^*(q,s_1\ldots s_j)$ for some $i\neq j$, then we can replace $w$ with $w'=s_1\ldots s_i s_{j+1}\ldots s_n$, which still gives a path from $q$ to $q'$. Consequently, there is an algorithm $\reachablestates$ which on input $M$ returns all states of $M$ reachable states of $M$, by returning all states of $M$ reachable by strings of length at most $\lvert Q\rvert$.
\end{remark}

We will mainly consider finite automata with input alphabet $$\Sigma_p=\{0,1,\ldots,p-1,.\}.$$

\begin{definition}
\label{def: valid expansion}
A string $s=s_1\cdots s_n\in\Sigma_p^*$ is said to be a \textbf{valid base $p$-expansion} if $s_1\neq 0$, $s_n\neq 0$, and $s_k$ is equal to the radix point for exactly one $k\in \{1,\ldots,n\}$. If $s$ is a valid base $p$-expansion and $s_k$ is the radix point, then we define the \textbf{value} of $s$ to be
$$v(s)=\sum_{i=1}^{k-1}s_ip^{k-1-i}+\sum_{i=k+1}^ns_ip^{k-i}\in \frac{1}{p^\infty}\N.$$
Conversely, for an element $v\in\frac{1}{p^\infty}\N$, we denote by $s(v)$ the valid base $p$-expansion of $v$.
\end{definition}

\begin{definition}
\label{def: p-automatic}
Let $\Delta$ be a finite set. A function
$$f:\frac{1}{p^\infty}\N\to\Delta$$
is called \textbf{$p$-automatic} if there is a DFAO $M=(Q,\Sigma,q_0,\Delta,\tau)$ such that for any $v\in\frac{1}{p^\infty}\Z$, we have that $f(v)=f_M(s(v))$.
\end{definition}

The connection between finite automata and generalised power series is captured in the following result, which is a particular instance of the more general Theorem 4.1.3. in \cite{MR2289431}.

\begin{theorem}
\label{thm: algebraic iff p-automatic}
Let $q$ be a power of a prime $p$ and let $f:\frac{1}{p^\infty}\N\to\F_q$ be a function with well-ordered support. Then the generalised power series
$$\sum_{\gamma\in\frac{1}{p^\infty}\N}f(\gamma)t^\gamma\in\PRFq$$
is algebraic over $\F_q(t)$ if and only if $f$ is $p$-automatic.
\end{theorem}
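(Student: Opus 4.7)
The plan is to follow the approach of \cite{MR2289431}, reducing both implications to Christol's theorem for ordinary power series in $\F_q[[t]]$, while carefully handling the extension from exponents in $\N$ to exponents in $\frac{1}{p^\infty}\N$, which manifests as the enlarged input alphabet $\Sigma_p = \{0, 1, \ldots, p-1, .\}$.

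For the direction $p$-automatic $\Rightarrow$ algebraic, I would associate to each state $q$ of a DFAO $M = (Q, \Sigma_p, \delta, q_0, \F_q, \tau)$ computing $f$ an auxiliary generalised power series $x_q = \sum_\gamma f_{M,q}(\gamma) t^\gamma$, where $f_{M,q}$ is the function computed by $M$ with $q$ as the initial state. Analysing the transition structure by conditioning on the first input character produces a finite system of relations among the $x_q$ over $\F_q(t^{1/p^\infty})$ involving multiplication by monomials $t^{a/p^k}$ and composition with Frobenius. The span of $\{x_q\}_{q \in Q}$ is thus contained in a finitely generated module over $\F_q(t^{1/p^\infty})$ closed under Frobenius, and each $x_q$ satisfies an algebraic equation; since $\F_q(t^{1/p^\infty})/\F_q(t)$ is purely inseparable, $x = x_{q_0}$ is then algebraic over $\F_q(t)$ as well.

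For the converse, I would induct on the degree of $x$ over $\F_q(t)$, separating its minimal polynomial into purely inseparable and separable parts. The purely inseparable case reduces by Frobenius to an element of $\F_q(t^{1/p^n})$ which, via the substitution $u = t^{1/p^n}$, becomes a classical algebraic power series in $\F_q[[u]]$ whose coefficient sequence is $p$-automatic by Christol's theorem; a DFAO over $\Sigma_p$ that shifts the radix point $n$ places then witnesses $p$-automaticity of $f$. The separable case is handled by Hensel's lemma inside the algebraically maximal field $\PPFq$: starting from a $p$-automatic approximation $x_0$ to $x$, Newton iteration gives $x$ as a digit-wise limit, and $p$-automaticity is preserved at each step by closure of the class under sums, products, and Frobenius.

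The main obstacle is the interaction of well-orderedness of $\supp(f)$ with the infinite-denominator structure of $\frac{1}{p^\infty}\N$. The example $x = \sum_{n \ge 1} t^{1 - 1/p^n} \in \PRFq$, which satisfies $X^p - t^{p-1} X - t^{p-1} = 0$, shows that algebraic elements of $\PRFq$ can have support with unbounded $p$-denominators, so the forward direction cannot be reduced to Christol via a single substitution $u = t^{1/p^n}$. Consequently, DFAOs witnessing $p$-automaticity of such $f$ must accept postradix strings of arbitrary length, and the well-ordered support condition translates into structural constraints on the loops in the postradix portion of $M$. Establishing and exploiting these constraints in both directions is the technical heart of the argument.
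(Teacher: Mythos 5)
The paper does not prove this statement at all: it is quoted verbatim as an immediate consequence of Theorem~4.1.3 of Kedlaya's paper \cite{MR2289431}, and the decidability argument in Section~\ref{sec: decidability of PPF} proceeds by treating it as a black box. So the relevant comparison is between your sketch and Kedlaya's proof, not with anything in the present paper.

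Your sketch of the ``$p$-automatic $\Rightarrow$ algebraic'' direction is in the right spirit: a Christol-style kernel argument with one auxiliary series $x_q$ per state and relations extracted from the transition structure, carried out over the perfection $\F_q(t^{1/p^\infty})$. That is essentially what happens in \cite{MR2289431}, though even there the radix point forces some care about which end of the string one conditions on.

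The genuine gap is in the ``algebraic $\Rightarrow$ $p$-automatic'' direction, precisely in the separable case that you yourself flag as the technical heart. Hensel's lemma and Newton iteration produce a sequence $x_0, x_1, \ldots$ converging $t$-adically to the root $x$, and closure of the $p$-automatic class under ring operations and Frobenius does show that each finite iterate $x_n$ is $p$-automatic. But this does not say anything about $x$ itself: the class of $p$-automatic generalized power series is not closed under $t$-adic limits (there are only countably many DFAOs over $\Sigma_p$ with output in $\F_q$, yet the $t$-adic closure of any reasonable subset of $\PRFq$ is uncountable). Your own example $x = \sum_{n\ge 1} t^{1-1/p^n}$ illustrates exactly this: it is a separable algebraic element whose support has unbounded $p$-power denominators, so no finite Newton iterate captures its tail, and a single automaton for $x$ must be built directly rather than as a ``digit-wise limit'' of automata for the iterates. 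What Kedlaya actually does here is structural rather than approximative: he uses his description of the algebraic closure of $\F_q((t))$ inside the Hahn field to put an algebraic element into a canonical ``twist-recurrent'' form, in which the tail of the postradix digit expansion satisfies a Frobenius-type linear recurrence with coefficients drawn from a finite set; that recurrence is what one turns into the loop structure of a DFAO, and it is also what the rooted-saguaro characterisation in Theorem~\ref{thm: well-ordered DFAO} is tracking. Without some such explicit representation of the root, the Hensel/Newton step does not close.
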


\begin{definition}
\label{def: well-formed}
Let $M=(Q,\Sigma_p,\delta,q_0,\Delta,\tau)$ be a DFAO. We say that $M$ is \textbf{well-formed} (resp. \textbf{well-ordered}) if there is an arbitrary (resp. a well-ordered) subset $S\subset\frac{1}{p^\infty}\N$ such that the language accepted by $M$ consists of the valid base $p$ expansions of the elements of $S$.
\end{definition}

For a well-ordered DFAO with input alphabet $\Sigma_p$ and output set $\F_q$, we denote by $\Pow(M)$ the element
$$\sum_{\gamma\in\frac{1}{p^\infty}\N}f_M(s(\gamma))t^\gamma\in\PRFq.$$

The following results are implicit in \cite{MR2289431}. Detailed proofs are provided in Appendix \ref{appendix}.

\begin{lemma}
\label{lem: equals}
There is an algorithm $\equals$ which takes as input two well-ordered DFAOs $M$ and $N$ with output alphabet $\F_q$, returning $\true$ if $\Pow(M)=\Pow(N)$ and $\false$ otherwise
\end{lemma}

\begin{proof}
See Remark \ref{rem: equals}.
\end{proof}

\begin{lemma}
\label{lem: list well ord}
Let $q=p^n$ be a prime power. Then, there is an algorithm $\listwellord\_\F_q$ which enumerates well-ordered DFAOs with output alphabet $\F_q$.
\end{lemma}

\begin{proof}
    See Remark \ref{rem: list well-ordered}.
\end{proof}

\begin{lemma}
\label{lem: is root}
There is an algorithm $\isrootalg$ which takes as input a polynomial $f(X)\in\F_p[t][X]$ and a DFAO $\X$ and returns $\true$ if $f(\Pow(\X))=0$ and $\false$ otherwise.
\end{lemma}

\begin{proof}
See Remark \ref{rem: is root}.
\end{proof}

\section{The \texorpdfstring{$\L_t$}{[Lt]}-theories of tame Hahn fields}

\subsection{An AKE-principle for tame fields in \texorpdfstring{$\L_t$}{[Lt]}}
\label{sec: ake}
Throughout this section, let $(E,v)$ be a fixed valued field of residue characteristic $p$ such that $Ev$ is an algebraic extension of its prime field. Let $\Pi\subset E$ be such that $v\Pi$ generates $vE$. Let $(L,v)$ be a tame field containing $E$ and denote by $(K,v)$ the relative algebraic closure of $E$ in $L$. We will assume that $(K,v)$ is algebraically maximal and that $vL/vK$ is torsion free. In particular, if $E=\F_p(t)$ with the $t$-adic valuation, then $L$ can be an equal charactersitic tame Hahn field, since the conditions on $K$ are then satisfied by Lemma \ref{lem: hahn tame rel condition}.

We will use Theorem \ref{thm: rel subcompl} to obtain an AKE-principle for the theory of $(L,v)$ in the language $\Lval(v\Pi)$, following closely the proof of Lemma 6.1 in \cite{Ku16}.\footnote{This AKE-principle was originally formulated for $E=\F_p(t)$ and $\Pi=\{t\}$. The general result was prompted by Konstantinos Kartas asking if it also holds in characteristic zero. I would also like to thank him for suggesting to consider $E$ as an arbitrary subfield, as an earlier version (see \cite{lisinski}) only treated $E=\F_p(t)$ and $E=\Q$.}

\begin{lemma}
\label{lem: ake}
Let $(F,w)$ be a tame field containing $(E,v)$ and suppose that \linebreak $vL\equiv_{v\Pi} wF$ and $Lv\equiv_{Ev} Fw$. Suppose furthermore that $(K,v)$ is isomorphic over $(E,v)$ to a valued subfield of $(F,w)$. Then $(L,v)\equiv(F,w)$ in the language $\Lval(K)$.
\end{lemma}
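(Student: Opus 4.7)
The plan is to apply Theorem~\ref{thm: rel subcompl}, the relative subcompleteness of tame fields, taking $(K,v)$ as a common defectless subfield of $(L,v)$ and $(F,w)$. After using the given $A$-isomorphism to identify $K$ with its image in $F$, $K$ becomes a common valued subfield containing $t$ (since the $A$-isomorphism fixes $t$), and a successful application of Theorem~\ref{thm: rel subcompl} will yield $(L,v)\equiv_{(K,v)}(F,w)$ and in particular $(L,v)\equiv_t(F,w)$.

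I would first verify the structural hypotheses of Theorem~\ref{thm: rel subcompl}. In positive characteristic, $L$ is tame hence perfect by Theorem~\ref{thm: hahn condition for tame}, and since $K$ is relatively algebraically closed in $L$ it is also perfect; combined with the algebraic maximality of $K$, Theorem~\ref{thm: hahn condition for tame} gives that $(K,v)$ is tame. In the mixed characteristic case, $K$ is algebraic over $\Q$, so $Kv$ is algebraic over $\F_p$ (each finite subextension $E\mid\Q$ of $K$ has $[Ev:\F_p]<\infty$) and therefore perfect; $vK$ is $p$-divisible since $vL$ is and $vL/vK$ is torsion-free; henselianity of $K$ follows from that of $L$ together with relative algebraic closure; combined with algebraic maximality these again give that $(K,v)$ is tame. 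In either case, tameness of $K$ implies that $K$ is defectless and that $Kv$ is perfect, so $Lv\mid Kv$ is separable. Torsion-freeness of $vL/vK$ is a hypothesis.

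The remainder would consist of upgrading the elementary equivalences. For the value groups, any $y\in K$ is algebraic over $A$, so $v(y)$ satisfies $n\,v(y)\in\Z v(t)$ by a Newton polygon argument on its minimal polynomial; since $vL$ is torsion-free, each $\gamma\in vK$ is uniquely determined in $vL$ by an equation $nx=m\,v(t)$. Any $\L_{\og}$-sentence with parameters from $vK$ is then equivalent, both in $vL$ and in $wF$, to an $\L_{\og}(v(t))$-sentence obtained by replacing each parameter with an existentially quantified variable subject to its defining equation, and the hypothesis $vL\equiv_{v(t)}wF$ produces $vL\equiv_{vK}wF$.

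For the residue fields I would first observe that $Kv$ is algebraic over $\F_p$, so any finite tuple $\bar u\in Kv^n$ generates a finite subfield $\F_p(\bar u)\subset\overline{\F_p}$. Writing $\bar\iota\colon Kv\to Fw$ for the embedding of residue fields induced by the given $A$-isomorphism, a root-counting argument using $Lv\equiv Fw$ shows that $\bar\iota(Kv)$ coincides with the relative algebraic closure of $\F_p$ in $Fw$. Since $Lv$ and $Fw$ are perfect, Frobenius is an automorphism of each, and powers of Frobenius realise every $\F_p$-automorphism of $\F_p(\bar u)$ inside either ambient field. Hence for any $\L_{\ring}$-formula $\phi(\bar y)$, the truth of $\phi(\bar u)$ in $Lv$ is equivalent to that of $\exists\bar y\,(P(\bar y)\wedge\phi(\bar y))$, where $P(\bar y)$ encodes the finite set of polynomial equations and inequations defining the quantifier-free type of $\bar u$ over $\F_p$; the analogous statement holds in $Fw$ for $\bar\iota(\bar u)$, and $Lv\equiv Fw$ then produces $Lv\equiv_{Kv}Fw$ via $\bar\iota$. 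This final step is where the main obstacle lies: the non-canonicity of $\F_p$-embeddings of algebraic extensions of $\F_p$ requires the Frobenius argument in place of a direct parameter-by-parameter transfer. With all hypotheses verified, Theorem~\ref{thm: rel subcompl} then completes the proof.
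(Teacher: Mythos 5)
Your proof is correct and follows essentially the same route as the paper's: verify that $(K,v)$ is a common defectless (indeed tame) subfield satisfying the hypotheses of Definition~\ref{def: rel subcompl}, upgrade the equivalences $vL\equiv_{v(t)}wF$ and $Lv\equiv Fw$ to equivalences over $vK$ and $Kv$ using definability of $vK$ over $v(t)$ and of finite subextensions of $Kv$ over $\F_p$, and then apply Theorem~\ref{thm: rel subcompl}. Your Frobenius argument for the residue-field step is a welcome extra: the paper passes directly from ``finite subextensions of $Kv/\F_p$ are definable'' to $Lv\equiv_{Kv}Fw$, whereas you make explicit the needed observation that conjugate generators of a finite subfield are indistinguishable in $Lv$ (and $Fw$) because Frobenius, being an automorphism of the perfect fields $Lv$ and $Fw$, realises all $\F_p$-automorphisms of the finite subfield.
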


\begin{proof}
By assumption, $(K,v)$ is algebraically maximal. Since $L$ is perfect, we also have that $K$ is perfect. Furthermore, since $vL$ is $p$-divisible and $vL/vK$ is torsion free, we have that $vK$ is $p$-divisible. Hence, by Theorem \ref{thm: hahn condition for tame}, we have that $(K,v)$ is tame. In particular, it is defectless. Since $Lv|Kv$ is separable we are in the situation of Definition \ref{def: rel subcompl} and it is enough to show that $vL\equiv_{vK} wF$ and $Lv\equiv_{Kv} Fw$.

Let $\psi(\bar{a})$ be a sentence in $\Log(vK)$. Since $K$ is an algebraic extension of $E$, we have that $vK$ is a subgroup of the divisible hull of $vE$. With $\bar{a}=(a_1,\ldots,a_m)$, we can therefore write
$$a_i=\sum_{j=1}^n\frac{b_{i,j}}{c_{i,j}}v(\pi_{i,j}),$$
where $\frac{b_{i,j}}{c_{i,j}}\in\Q$ and $\pi_{i,j}\in\Pi$. Each $a_i$ is the unique element in $vK$ satisfying the $\Log(v\Pi)$-formula $\phi_i(X)$ defined as
$$\prod_{j=1}^n c_{i,j}X =\sum_{j=1}^nb_{i,j}\prod_{k\neq j}c_{i,k} v(\pi_{i,j}).$$
Hence, $\psi(\bar(a))$ is equivalent to the $\Log(v\Pi)$-sentence
$$\exists X_1\cdots\exists X_m\left(\psi(X_1,\ldots,X_m)\bigwedge_{i=1}^m \phi_i(X_i) \right).$$
Since $vL$ and $wF$ are elementary equivalent in the language $\Log(v\Pi)$, they are therefore elementary equivalent in the language $\Log(vK)$.

Since $Kv$ is contained in $\acl(E)$, we have by Lemma \ref{lem: acl} that $Lv\equiv_{Kv}Fw$. We conclude that we are in the situation of Definition \ref{def: rel subcompl}. By Theorem \ref{thm: rel subcompl}, we get that $(L,v)\equiv_{(K,v)}(F,w)$ in the language $\Lval$, and we are done.
\end{proof}

We will now show that Lemma \ref{lem: ake} implies an AKE-principle in $\Lval(K)$ relative to the algebraic part. For a set $A\subset L$, denote by $\M_A$ the set of monic polynomials over $A$. For each $f\in\M_A$, let $\phi_f$ be the $\Lval(A)$-sentence defined as
$$\exists X \left(f(X)=0 \land v(X)\ge 0\right).$$
Define
$$S_{L,A}=\{\phi_f\mid f \in \M_A, \ (L,v)\models \phi_f\}.$$
We then have the following.

\begin{theorem}
\label{thm: ake without kaplansky}
Suppose that $(E,v)$ has rank one. Let $A\subset E_{\ge 0}$ be dense in the completion of $E_{\ge 0}$, with respect to $v$. Let $(F, w)$ be a tame field containing $(E,v)$ such that $(F,w) \models S_{L,A}$ and such that $vL\equiv_{v\Pi} wF$ and $Lv\equiv Fw$. Then, $(K,v)$ is isomorphic over $E$ to a subfield of $F$ and, identifying $K$ with its image under this isomorphism, we have that $(L,v)\equiv(F,w)$ in the language $\Lval(K)$.
\end{theorem}

\begin{proof}
Let $(F, w)$ be as described. We want to show that $(K,v)$ is isomorphic as a valued field over $E$ to a valued subfield of $F$. Since $(L,v)$ and $(F,w)$ are henselian, the isomorphism of $(E,v)$ and $(E,w)$ can be extended to an isomorphism of valued fields $\Phi$ over $E$ of the henselisations $E^h$ of $E$ in $L$ and $F$ respectively. Consider the set $\U$ of finite separable extensions of $E^h$ inside $L$. We claim that any element in $\U$ is imomorphic over $E^h$ to a subfield of $F$. Indeed, let $U\in\U$. Then $U=E^h(c_0)$, where $c_0$ is integral over $E^h_{\ge 0}$. Let 
$$u=a_0+\cdots + a_{n-1}X^{n-1} +X^n\in E^h_{\ge 0}[X]$$
be the minimal polynomial of $c_0$ and let $\{c_0,\ldots,c_{n-1}\}\subset\bar{E}$ be the conjugates of $c_0$. Let $\alpha_i=\max_{i\neq j}\{v(c_i-c_j)\}$ and let $\alpha=\max_i\{\alpha_i\}$. Since any valuation ring is integrally closed in its field of fraction we have that $v(c_i)\ge 0$ for each $i$, so $\alpha\ge 0$. By Theorem \ref{thm: approximate polynomials}, there is $\gamma\in v\bar{E}$ such that for any polynomial
$$\tilde{u}=\prod_{i=1}^n(X-\tilde{c}_i)=\tilde{a}_0+\cdots + \tilde{a}_{n-1}X^{n-1} +X^n\in \bar{E}[X]$$
with $\min_{0\le i<n}\{v(a_i-\tilde{a}_i)\}>\gamma$ and for any $i\in\{1,\ldots,n\}$, there is exactly one $j_i\in\{1,\ldots,n\}$ such that $v(c_i-\tilde{c}_{j_i})>\alpha$. In particular, any such $\tilde{u}$ is separable and $v(\tilde{c}_{j_0})\ge 0$. Since $(E,v)$ has rank one, we have that $E^h$ is contained in the completion of $E$ with respect to $v$. Hence, by assumption on $A$, we can let $\tilde{u}=\tilde{a}_0+\cdots + \tilde{a}_{n-1}X^{n-1} +X^n\in A[X]$ be such that $\min_{0\le i<n}\{v(a_i-\tilde{a}_i)\}>\gamma$. By Krasner's Lemma, we get that $E^h(c_0)\subset E^h(\tilde{c_{j_0}})$. By the degree of $c_0$, this implies that $\tilde{u}$ is irreducible over $E^h$. Conversely, we can take $\tilde{u}$ to be close enough to $u$ so that $v(c_0-\tilde{c}_0)>\max_{i\neq 0}\{v(\tilde{c_0}-\tilde{c}_i)\}$. This implies that $E^h(\tilde{c}_{j_0}) \subset E^h(c_0)$, so $E^h(\tilde{c}_{j_0}) = E^h(c_0)$. In particular, $\tilde{c}_{j_0}\in L$ and $\phi_{\tilde{u}}\in S$, so $\tilde{u}$ has a root $c\in F$ and the map $\tilde{c}_{j_0}\mapsto c$ embeds $E^h(c_0)$ over $E^h$ into $F$.

We can now conclude that relative separable closure of $E^h$ in $L$ embeds over $E^h$ into $F$, following the argument in \cite{4052915}. The set $\U$ together with inclusions forms a directed system. The corresponding direct limit $E'$ is the relative separable closure of $E^h$ in $L$. By the universal property of direct limit, it is enough to show that there is a set of embeddings $\{\iota_U : U \hookrightarrow_{E^h} F \ | \ U\in\U\}$ which is compatible with inclusions. For this, we consider the following inverse system. For each $U\in\U$, let $V_U$ be the set of embeddings over $E^h$ of $U$ into $F$. As shown above, this is a finite nonempty set for each $U$. Define a partial order $\le$ on $\V$ by letting $V_U\le V_{U'}$ if $U\subset U'$. Under this ordering and together with retriction maps, $\V$ forms an inverse system. Since it is an inverse system of finite nonempty sets, we have that the corresponding inverse limit is nonempty. By construction, an element of this inverse limit is a compatible system of embeddings over $E^h$, as wanted.

Since the valuation on a henselian field extends uniquely to algebraic extensions, the valuation on the image of $E'$ in $F$ induced by $v$ coincides with $w$. Hence, $\Phi$ extends to $E'$. Finally, since $L$ and $F$ are perfect, we can extend $\Phi$ to the perfect hull of $E'$, which is equal to $K$. We conclude that there is a valued field isomorphism of $(K,v)$ to a subfield of $F$ which preserves $E$. Hence, we are in the situation of Lemma \ref{lem: ake}, and we can conclude the statement.
\end{proof}

\begin{remark}
Note that by Theorem \ref{thm: ake without kaplansky}, the copy of $K$ in $F$ will indeed be the relative algebraic closure of $E$. One could get this immediately by imposing that $F\models \neg\phi_f$ for all monic $f\in E_{\ge 0}[X]$ such that $L\models \neg\phi_f$. The reason why we don't need to do this is because of relative subcompleteness; the fact that $(K,v)$ is a valued subfield of $(F,w)$ is enough to guarantee that it cannot have any proper algebraic extensions in $F$. 
\end{remark}

We now obtain Theorem \ref{thm: main ake result} and Theorem \ref{thm: mixed char ake result} as corollaries to Theorem \ref{thm: ake without kaplansky}.

\begin{proof}[Proof of Theorem \ref{thm: main ake result} and Theorem \ref{thm: mixed char ake result}]
    Let $E=\F_p(t)$, $A=\F_p[t]$,  and $\pi=t$ if $L$ has characteristic $p$. Let $E=\Q$, $A=\Z$, and $\pi=p$ if $L$ has characteristic zero. In both cases, we have that $S=S_{L,A}$. When $L$ has characteristic $p$, $v(\pi)>0$ by assumption. Otherwise, $v(\pi)>0$ since $Lv$ has characteristic $p$. We get that $v$ is the $\pi$-adic valuation on $E$, up to scaling. In particular, $(E,v)$ has rank one and $A$ is dense in the completion of $E_{\ge 0}$ with respect to $v$. Since the relative algebraic closure $(K,v)$ of $(E,v)$ in $L$ satisfies the conditions in beginning of Section \ref{sec: ake} and since $F$ satisfies the conditions of Theorem \ref{thm: ake without kaplansky} with $\Pi=\{\pi\}$, we have that $(L,v)\equiv (F,w)$ in the language $\Lval(K)$. In particular, since $\pi\in K$, we get that $(L,v)\equiv_\pi (F,w)$. When $\pi=p$, this just says that $(L,v)\equiv (F,w)$
\end{proof}

In certain cases, it is not necessary to specify the set $S$ in Theorem \ref{thm: ake without kaplansky}. For this, we fix $E$ to be $\F_p(t)$.

\begin{theorem}
\label{thm: ake with kaplansky}
Let $\F$ be a field of characteristic $p$ without any proper finite extension of degree divisible by $p$ and let $(L,v)$ be $\F((t^{1/p^\infty}))$ with the $t$-adic valuation. Let $(F,w)$ be a tame field containing $\F_p(t)$ such that $vL\equiv_{v(t)} wF$ and $Lv\equiv Fw$. Then $(L,v)\equiv(F,w)$ in the language $\L_t$.
\end{theorem}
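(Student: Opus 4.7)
The plan is to apply Lemma \ref{lem: ake}, for which the only nontrivial task under the current hypothesis is to construct an embedding of the relative algebraic closure $(K,v)$ of $\F_p(t)$ in $L$ into $(F,w)$ over $\F_p(t)$ as a valued field. The Kaplansky-type condition on $\F$ will supply this embedding via the uniqueness of maximal immediate extensions.

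First I would verify that both $(L,v)$ and $(F,w)$ are algebraically maximal Kaplansky fields. For $(L,v)$ this is immediate from Theorem \ref{thm: whaples} combined with maximality of Hahn fields. For $(F,w)$, note that the condition ``no proper finite extension of degree divisible by $p$'' is first-order axiomatized by the schema asserting, for each $n$ divisible by $p$, the non-existence of a monic irreducible polynomial of degree $n$; this schema transfers to $Fw$ via $Fw\equiv \F$. Similarly, $p$-divisibility of $wF$ follows from $wF\equiv_{v(t)}\frac{1}{p^\infty}\Z$. Since $F$ is tame, hence henselian and algebraically maximal, Theorem \ref{thm: whaples} yields that $(F,w)$ is algebraically maximal Kaplansky.

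Next, let $K'$ be the relative algebraic closure of $\F_p(t)$ in $F$; by Theorem \ref{thm: relative algebraic closure in maximal kaplansky}, $K'$ is algebraically maximal Kaplansky. I would then show that $K$ and $K'$ share the same residue field $E:=\Fpbar\cap\F$ and the same value group $\frac{1}{p^\infty}\Z$. For $K$ these follow from Lemma \ref{lem: rel alg closure in Hahn field}. For $K'$, the residue field identification uses that $E$ is characterised first-orderly as the set of roots in $\F$ of irreducible polynomials over $\F_p$, so $\Fpbar\cap Fw=E$, combined with Hensel-lifting of $E$ into $F$. For the value group, any $\gamma\in vK'$ satisfies $n\gamma=mv(t)$ for some $m,n\in\Z$ with $\gcd(m,n)=1$ (as $K'/\F_p(t)$ is algebraic), and a Bezout combination $\delta=a\gamma+bv(t)$ produces $n\delta=v(t)$ in $wF$, exhibiting an $n$-division of $v(t)$; but the sentence $\neg\exists y(\ell y=v(t))$ holds in $\frac{1}{p^\infty}\Z$ for each prime $\ell\neq p$, so by $wF\equiv_{v(t)}\frac{1}{p^\infty}\Z$ it holds in $wF$, forcing $n$ to be a power of $p$ and hence $vK'\subseteq\frac{1}{p^\infty}\Z$.

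To build the embedding $K\hookrightarrow F$, let $\tilde H$ be the henselisation of $E\cdot\F_p(t)^{1/p^\infty}$; using perfection of $F$ (from tameness in characteristic $p$) and Hensel-lifting of $E$, $\tilde H$ embeds canonically up to $\F_p(t)$-isomorphism into both $L$ and $F$. The field $E$ inherits the Kaplansky-type property from $\F$, since any degree-$p$-divisible extension of $E$ inside $\Fpbar$ is linearly disjoint from $\F$ over $E=\F\cap\Fpbar$ and would lift to a forbidden extension of $\F$, so $\tilde H$ is itself a Kaplansky field. Both $K$ and $K'$ are algebraic immediate extensions of $\tilde H$ that are algebraically maximal Kaplansky. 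Kaplansky's uniqueness of maximal immediate extensions identifies both $K$ and $K'$ with the algebraic part of the unique maximal immediate extension of $\tilde H$, yielding an isomorphism $K\cong K'$ over $\tilde H$, hence over $\F_p(t)$. Lemma \ref{lem: ake} then gives $(L,v)\equiv_t(F,w)$. The main technical obstacle is the value group computation for $K'$, where the first-order properties of $v(t)$ in $wF$ must be combined carefully with the algebraicity of $K'$ over $\F_p(t)$ via a Bezout-type manipulation.
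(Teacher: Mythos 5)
Your proof is correct and follows essentially the same route as the paper: lift $\F_p(t)^{1/p^\infty}$ and the algebraic closure of $\F_p$ in $\F$ into both sides, observe that the resulting henselian subfield is Kaplansky with residue field $\Fpbar\cap\F$ and value group $\frac{1}{p^\infty}\Z$, then invoke uniqueness of the maximal (algebraic) immediate extension over a Kaplansky field together with Theorem~\ref{thm: relative algebraic closure in maximal kaplansky} to embed $K$ into $F$, and finish with Lemma~\ref{lem: ake}. You merely make explicit a few steps the paper leaves tacit (the Whaples-style verification that $F$ is a Kaplansky field via first-order transfer from $Lv$ and $vL$, and the Bezout computation pinning down $vK'$), but these are the same ingredients the paper relies on.
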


\begin{proof}
The valuation preserving isomorphism between the copies of $\F_p(t)$ in $(L,v)$ and $(F,w)$ respectively can be extended to the perfect hull $\F_p(t)^{1/p^\infty}$ of $\F_p(t)$. Again, we can extend this to a valued fields isomorphism $\Phi$ of the henselisations $D=(\F_p(t)^{1/p^\infty})^h$ of $\F_p(t)^{1/p^\infty}$ in $L$ and $F$ respectively. We can then extend $\Phi$ to an isomorphism of valued fields with residue fields equal to $Kv$ as follows.

For an element $c\in K_{\ge 0}$, denote by $f_c(X)$ its minimal polynomial over $D$. Let $I=\{c\in K_{\ge 0} \ | \ f_c(X)\in \F_p[X]\}$ and let $\U=\{D(c) \ | \ c\in I\}$. We have that $\U$ together with inclusions is a directed system. Indeed, for two extensions $U$ and $U'$ in $\U$, their compositum is equal to $D(c, c')$, with $c$ and $c'$ being elements in $I$. Since $f_c$ and $f_{c'}$ are irreducible over $\F_p$, we have that $[\F_p(\barc):\F_p]=[U:D]$ and $[\F_p(\bar{c'}):\F_p]=[U':D]$. Let $\alpha\in Kv$ be such that $\F_p(\barc,\bar{c'})=\F_p(\alpha)$ and let $u(X)\in \F_p[X]$ be the minimal polynomial of $\alpha$. Since $u$ is separable, there is $d\in K_{\ge 0}$ such that $d$ is a lift of $\alpha$ with $u(d)=0$, as in Theorem \ref{thm: hensel}. Let $V=D(d)$. Since $[V:D]\ge [\F_p(\alpha):\F_p]$, we have that $u$ remains irreducible over $D$ and is the minimal polynomial of $d$ over $D$. This shows that $V\in\U$ and that $UU' = V$. 

Let $D'=\varinjlim \U$. For each $c\in I$, let $\phi_c$ be the $\Lval$-sentence $\exists X(f_c(X)=0)$. By definition of $I$, we have that $(L,v)\models \phi_c$ for each $c\in I$. Since $(L,v)\equiv (F,w)$ in the language $\Lval$, this implies that $(F,w)\models\phi_c$ for each $c\in I$. As in the proof of Theorem \ref{thm: ake without kaplansky}, we get that $D'$ embeds over $(\F_p(t)^{1/p^\infty})^h$ into $F$. Thus, we can extend $\Phi$ to a valued field isomorphism of the copies of $D'$ in $L$ and $F$ respectively. Note that any witness of a sentence $\phi_f$ must have valuation $0$. Indeed, let $f=\sum_{i=0}^na_iX^i$ with $a_i\in \F_p$ and suppose $v(x)\neq0$. Then $v(a_ix^i)=iv(x)$ for all non-zero $a_i$. This implies that
$$vf(x)=\min_{i: \ a_i\neq 0}\{iv(x)\}\neq \infty,$$
so $f(x)\neq 0$.

Let $\alpha\in Kv$ and let $f_{\alpha}$ be the minimal polynomial of $\alpha$ over $\Fp$. In particular, $f_\alpha$ is separable. Let $a\in K_{\ge 0}$ be a lift of $\alpha$ such that $f_\alpha(a) = 0$. As noted above, $f_\alpha$ remains irreducible over $D$. This implies that $a\in D'$, by definition of $I$. Hence, $\alpha\in D'v$, so $Kv\subset D'v$. For the converse inclusion, we just note that $D'v$ is an algebraic extension of $\Fp$. To summarise, we now have an injective field homomorphism from an algebraic subextension $(D',v)$ of $(L,v)/(\F_p(t),v)$ into $(F,w)$, preserving $\F_p(t)$, where $D'v=Kv$ and $vD'=\frac{1}{p^\infty}\Z$. We identify the image of $(D',v)$ in $(F,w)$ with $(D',v)$ itself.

By elementary equivalence in $\Lval$, both $(L,v)$ and $(F,w)$ are algebraically maximal Kaplansky fields. By Theorem \ref{thm: relative algebraic closure in maximal kaplansky}, the relative algebraic closure of $\F_p(t)$ in $L$ and $F$ are therefore algebraically maximal Kaplansky fields. Hence, $(D',v)$ is also a Kaplansky field and $(K,v)$ is the unique algebraically maximal immediate algebraic extension of $D'$, as in Lemma \ref{lem: unique algebraically maximal}. Thus, the isomorphism of the copies of $D'$ in $L$ and $F$ extends to an isomorphism of valued fields between $K$ and a subfield of $F$ and we are in the situation of Lemma \ref{lem: ake}, noting that our particular choice of $L$ satisfies the general assumptions of this section.
\end{proof}

\begin{corollary}
\label{cor: kaplansky decidable}
Let $\F$ be a field of characteristic $p$ without any proper finite extension of degree divisible by $p$. Suppose that $\F$ is decidable in the language $\Lring$. Then $\F((t\perf))$ is decidable in the language $\L_t$. In particular, if $\F$ is algebraically closed, then $\F((t\perf))$ is decidable.
\end{corollary}

\begin{proof}
By Theorem \ref{thm: ake without kaplansky}, the $\L_t$-theory of $\F((t\perf))$ is axiomatised by the $\Lval$-theory of tame fields, the $\Log(1)$-theory of $\frac{1}{p^\infty}\Z$ and the $\Lring$-theory of $\F$, which are all decidable.
\end{proof}

\begin{remark}
When $\F$ is an algebraically closed field of characteristic $p$, then $\F$ is decidable. In particular, by Corollary \ref{cor: kaplansky decidable}, there is an algorithm $\countroots\_{\F}$, which takes as input a polynomial $f(X)\in\F_p[t]$ and returns $m\in \N$, where $m$ is the number of roots of $f$ in $\F[[\tinfty]]$.
\end{remark}

\subsection{Decidability of \texorpdfstring{$\PPF$}{[Hahn field over Fp with value group the p-divisible hull of Z]}}
\label{sec: decidability of PPF}
We now turn to the question of decidability in $\L_t$ for Hahn fields of characteristic $p$ with value group $\frac{1}{p^\infty}\Z$. In this case, we can use the theory of finite automata established in Section \ref{sec: finite automata} to show that there is a recursive procedure to determine the set $S$ in Theorem \ref{thm: ake without kaplansky}. This approach was suggested by Ehud Hrushovski. We will later see how decidability of general tame Hahn fields of characteristic $p$ can be reduced to this case. To start, we need some more auxiliary algorithms.

\begin{lemma}
\label{lem: isin}
Let $q=p^n$ be a prime power. Let $\F$ be a decidable field of characteristic $p$. Then, there is an algorithm $\isin\_\F$ which on input a well-ordered DFAO $M=(Q,\Sigma_p,\delta,q_0,\F_q,\tau)$ with reachable states $F$ outputs $\true$ if $\tau(F)\subset \F$ and $\false$ otherwise.
\end{lemma}

\begin{proof}
Let $M=(Q,\Sigma_p,\delta,q_0,\F_q,\tau)$ be a well-ordered DFAO. Let $F$ be its reachable states, which are given by $\reachablestates(M)$. To start, we want to determine the minimal positive integer $k$ such that $\tau(F)\subset \F_{p^k}$. We do this by using the fact that $\tau(F)\subset \F_{p^k}$ if and only if all elements in $\tau(F)$ are roots of the polynomial $X^{p^k}-X$. As described in Section \ref{sec: finite automata}, elements in $\tau(F)$ are polynomials over $\F_p[X]$ of degree at most $n-1$. Hence, with $g(X)\in\F_p[X]$ being the irreducible polynomial of degree $n$ used to define $\F_q$ as an alphabet, determining if $\tau(F)$ is a subset of $\F_{p^k}$ amounts to checking if the sentence $\psi_k$ defined by
    $$\exists X\left(g(X)\bigwedge_{h\in \tau(F)}(h(X)^{p^k}-h(X)=0)\right)$$
holds in $\F_q$. Since any $\alpha\in\Fpbar$ witnessing $\psi_k$ will be in $\F_q$ by definition of $g$, it is enough to check if $\psi_k$ holds in $\Fpbar$, which we can do by decidability of $\Fpbar$. Hence, starting from $k=1$ and increasing $k$ until $\Fpbar\models\psi_k$ gives us the minimal positive integer $k$ such that $\tau(F)\subset \F_{p^k}$.

Now, it only remains to verify if $\F_{p^k}\subset \F$ or not. This is done using decidability of $\F$, since $\F_{p^k}\subset \F$ if and only if $\F$ has $p^k$ distinct roots to the polynomial $X^{p^k}-X$.
\end{proof}

Given $m\in\N$, we define $\Gamma_m:=\frac{1}{mp^\infty}\Z$. The following result appears as Corollary 5.4 and Corollary 5.7 in \cite{lisinski2}
\begin{lemma}
\label{lem: alg bounds}
Let $F$ be a field of characteristic $p$. Then the following hold. 
\begin{enumerate}
\item There is an algorithm $\maxram$ which takes as input a polynomial $f(X)\in\F_p(t)[X]$ and outputs a natural number $m$ not divisible by $p$ such that any root of $f$ in $\F((t^\Q))$ is already in $\F((t^{\Gamma_m}))$.
\item There is an algorithm $\maxexp$ which takes as input a polynomial $f(X)\in\F_p(t)[X]$ and outputs a natural number $m$ such that any root of $f$ in $\FbarQ$ is already in $\F_{p^m}((t^\Q))$.
\end{enumerate}
\end{lemma}

\begin{theorem}
\label{thm: ppf decidable}
Let $\F$ be a decidable perfect field of characteristic $p$. Then the Hahn field $\PPF$ is decidable in the language $\L_t$.
\end{theorem}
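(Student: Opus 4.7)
The plan is to use the AKE-principle of Theorem \ref{thm: ake without kaplansky} to give a recursively enumerable complete axiomatization of $\operatorname{Th}_{\L_t}(\PPF)$, and to handle the ``algebraic piece'' of that axiomatization by means of the effective theory of finite automata from Section \ref{sec: finite automata}. Since a complete r.e.\ theory is decidable, this will suffice.

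Set $(L,v) = \PPF$ and let $(K,v)$ be the relative algebraic closure of $\F_p(t)$ in $L$. Lemma \ref{lem: hahn tame rel condition} ensures that $(K,v)$ is tame with $vL/vK$ torsion free, placing us in the setting of Theorem \ref{thm: ake without kaplansky}. I would axiomatize $\operatorname{Th}_{\L_t}(\PPF)$ by: (a) a recursive scheme for tame fields of characteristic $p$, built from Theorem \ref{thm: hahn condition for tame} (algebraically maximal, perfect, $p$-divisible value group); (b) the $\L_{\ring}$-theory of $\F$ translated onto the residue field sort, which is r.e.\ because $\F$ is decidable; (c) the $\L_{\og}(1)$-theory of $\tfrac{1}{p^\infty}\Z$ translated onto the value group with $v(t)$ interpreting the constant, r.e.\ by the lemma in Section 2.2; and (d) the set $S = \{\phi_f : f\in\F_p[t][X]\text{ monic},\, \PPF\models\phi_f\}$. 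Theorem \ref{thm: ake without kaplansky} guarantees that any model of (a)--(d) is $\L_t$-elementarily equivalent to $\PPF$, so the axiomatization is complete, and it remains to show that $S$ is r.e.

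For this, observe that a root $\alpha\in\PPF$ of a monic $f\in\F_p[t][X]$ automatically has $v(\alpha)\ge 0$ by the ultrametric inequality applied to $f(\alpha)=0$, and is algebraic over $\F_p(t)$. Hence $\alpha$ lies in some finite extension of $\F_p(t)$ whose residue field is a finite $\F_{p^m}\subset\overline{\F_p}$, and because $\alpha\in\PPF$, its coefficients as a power series lie in $\F_{p^m}\cap\F$, hence in some $\F_{p^n}\subseteq\F$. By Theorem \ref{thm: algebraic iff p-automatic}, $\alpha$ is then represented by a well-ordered DFAO with output alphabet $\F_{p^n}$. The semi-decision procedure for $\phi_f\in S$ is therefore: iterate over $n\ge 1$; use decidability of $\F$ to check whether $\F_{p^n}\subseteq\F$ (a single existential $\L_{\ring}$-sentence about $\F$); when so, enumerate DFAOs $M$ with output alphabet $\F_{p^n}$, use Theorem \ref{thm: well-ordered DFAO} together with Remarks \ref{rem: well-formed effective} and \ref{rem: well-ordered DFAO decidable} to verify that $M$ is well-formed and well-ordered, then use Lemmas \ref{lem: DFAO addition} and \ref{lem: DFAO multiplication} to compute a DFAO for $f(M)$ and test whether it has any accepting states. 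Halt and accept $\phi_f$ when such an $M$ is found; this enumerates $S$.

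The main difficulty is coordinating the restriction to admissible output alphabets: one must certify that every $\PRF$-root of $f$ is captured by some well-ordered DFAO over an admissible $\F_{p^n}\subseteq\F$, and conversely that any admissible well-ordered DFAO produces a genuine element of $\PRF$ rather than a spurious candidate. Once this is in place, the decidability of $\F$ in $\L_{\ring}$ lets us sieve the alphabets effectively, and the effective DFAO arithmetic of Section \ref{sec: finite automata} turns the existence of a root into a semi-decidable property. Combined with (a)--(c), this yields an r.e.\ complete axiomatization of $\operatorname{Th}_{\L_t}(\PPF)$ and hence the desired decidability.
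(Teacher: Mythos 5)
Your proposal is correct and follows the paper's overall strategy — use Theorem \ref{thm: ake without kaplansky} to certify that (a)--(d) is a complete axiomatization, reduce to recursive enumerability of $S$, and discharge that via the effective automata theory of Section \ref{sec: finite automata} — but your treatment of $S$ is genuinely lighter. The paper builds a full \emph{decision} procedure for $S$: it first invokes Theorem \ref{thm: ake with kaplansky} to get $\L_t$-decidability of $\PPFbar$ (as $\Fbar$ has no finite extension of degree divisible by $p$), uses this to compute the number $m$ of roots of $f$ in $\PRFbar$, then enumerates well-ordered DFAOs over the single alphabet $\F_q$ with $q=p^{\deg(f)!}$ until all $m$ roots are collected, and finally tests each root's coefficients for membership in $\F$. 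You instead observe that a \emph{semi}-decision for $S$ already suffices for recursive enumerability, and obtain one by sieving alphabets $\F_{p^n}\subseteq\F$ using decidability of $\F$ and searching over well-ordered DFAOs until $f(M)=0$ is witnessed; this avoids Theorem \ref{thm: ake with kaplansky} and the root-counting altogether, at the price of not deciding $S$ outright. The coefficient-bounding point you flag is not a new obstruction: the paper relies on the very same fact that an $\F_p(t)$-algebraic root in $\PRFbar$ has coefficients in a computable finite field, discharged via Lemma \ref{lem: rel alg closure in Hahn field} together with finiteness of the residue field of $\F_p(t,\alpha)$, exactly as you sketch.
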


\begin{proof}
By Theorem \ref{thm: ake without kaplansky}, the $\L_t$-theory of $\PPF$ is axiomatised by the $\Lval$-axioms for tame fields of characteristic $p$, the $\Log(v(t))$-theory of $\frac{1}{p^\infty}\Z$, the $\Lring$-theory of $\F$, and the set $S$ of one variable positive existential $\L_t$-sentences satisfied by $\PPF$. The $\Log(v(t))$-theory of $\frac{1}{p^\infty}\Z$ is decidable by Lemma \ref{lem: ord grps decidable}, and the $\Lring$-theory of $\F$ is decidable by assumption. In particular, they are both recursively enumerable. Hence, it is enough to show that there is an algorithm $\enumeratealg\_S$ which enumerates $S$ to conclude that the $\L_t$-theory of $\PPF$ is recursively enumerable. We will show something slightly stronger, namely that $S$ is decidable. For this, we create an algorithm $\decidealg\_S$ as input a sentence $\phi_f$ with $f\in\F_p[t]$, and outputs $\true$ if $\phi_f\in S$ and $\false$ otherwise. This algorithm is outlined as follows.
\begin{enumerate}
    \item Let $m=\countroots\_\Fbar(f)$, i.e. the number of unique roots of $f$ in $\PRFbar$, and let $n=\maxexp(f)$. By Lemma \ref{lem: rel alg closure in Hahn field}, any root of $f$ in $\PRFbar$ is contained in $\Fpbar((\tinfty))$. So by the second item of Lemma \ref{lem: alg bounds}, all such roots lie in $\F_q[[\tinfty]]$, where $q=p^n$.
    \item Find DFAOs $M_1,\ldots,M_m$ such that $\{\Pow(M_i) \ | \ 1\le i\le m\}$ is the set of all roots of $f$ in $\F_q[[\tinfty]]$ by going through well-ordered DFAOs with output alphabet $\F_q$ and check if they represent roots to $f$ using $\isrootalg$. When finding a DFAO $M$ such that $\isrootalg(f,M)$ returns $\true$, we verify that $\Pow(M)$ is not equal to any of the previous found roots using $\equals$.
    \item Check if any of these roots are in $\PRF$, using the algorithm $\isin\_\F$ from Lemma \ref{lem: isin}.
\end{enumerate}

In pseudocode, the algorithm is given as follows.
\begin{algorithm}[H]
\begin{algorithmic}
\caption*{$\decidealg\_S(f)$}
\State $m\gets \countroots\_\Fbar(f)$
\State $n\gets \maxexp(f)$
\State $q\gets p^n$
\State $R\gets \emptyset$
\State $k\gets 0$
\While {$\lvert R\rvert < m$}
    \State $M\gets \listwellord\_\F_q(k)$
    \If{$\isrootalg(f,M)$}
        \State $\IsRoot \gets \true$
        \For {$N\in R$}
            \If {$\equals(M,N)$}
                \State $\IsRoot \gets \false$
            \EndIf
        \EndFor
    \EndIf
    \If{$\IsRoot$}
        \State $R\gets R\cup\{M\}$
    \EndIf
    \State $n\gets n+1$
\EndWhile
\For{$M\in R$}
    \If{$\isin\_\F(M)$}
        \State \Return $\true$
    \EndIf
\EndFor
\State \Return $\false$
\end{algorithmic}
\end{algorithm}
We conclude that the set $S$ is decidable, so $\PPF$ is decidable in the language $\L_t$.
\end{proof}

\begin{remark}
An algorithm which enumerates $S$ can be constructed by listing the $j$ first polynomials $f_1,\ldots, f_j$ over $\F_p[t]$ and the $j$ first well-ordered DFAs $M_1,\ldots, M_j$ with output alphabet $\F_{q_j}$, where $q_j=p^{m_j}$ is a subfield of $\F$ such that any root of $f_1,\ldots,f_j$ in $\PPF$ is already in $\F_q((t\perf))$, and such that $\F_{q_j}\subset \F_{q_{j+1}}$. If $f_k(\Pow(M_\ell))$ for some $k$ and some $\ell$ less than or equal to $j$, and if $f_k$ is not already equal to $\enumeratealg\_S(i)$ for some $i<n$, then $\enumeratealg\_S$ returns $f_k$. If this is not the case, we increase $j$ and repeat. The condition that $\F_{q_j}\subset\F_{q_{j+1}}$ is met using $\maxexp$ and it ensures that we will list all DFAOs that represent a root in $\PPF$ of some polynomial $f\in\F_p[t][X]$. The reason why why instead show that $S$ is decidable is to avoid the technicalities of comparing automata with different output alphabets.
\end{remark}

\subsection{Decidability of general positive characteristic tame Hahn fields}

We now turn to the question of decidability when there is ramification at primes different from the characteristic. As above, given $m\in\N$ we write 
$$\Gamma_m=\frac{1}{mp^\infty}\Z.$$
We start with the following observation.

\begin{corollary}
\label{cor: simple ramification}
Let $\F$ be a decidable perfect field of characteristic $p$.
Then, for any $m\in\N$, we have that $\F((t^{\Gamma_m}))$ is decidable in $\L_t$.
\end{corollary}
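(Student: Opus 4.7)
The plan is to reduce to the decidability of $\PPF$ established in Theorem \ref{thm: ppf decidable} via a single change of variables $t \mapsto t^{1/m}$. Consider the map
$$\iota : \PPF \longrightarrow \F((t^{\Gamma_m}))$$
determined on generators by $\iota(t) = t^{1/m}$; concretely, $\iota$ sends a Hahn series $\sum_{\gamma} a_\gamma t^\gamma$ indexed by $\gamma \in \tfrac{1}{p^\infty}\Z$ to the series $\sum_{\gamma} a_\gamma t^{\gamma/m}$ indexed by $\gamma/m \in \Gamma_m$. Since division by $m$ is order-preserving, the resulting support is well-ordered, so this is a well-defined bijection of Hahn fields, and it is clearly a ring homomorphism.

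The next step is to verify that $\iota$ is in fact an $\L_{\val}$-isomorphism. On the image of $\iota$ the two $t$-adic valuations differ only by the positive scalar $1/m$, so they induce the same valuation ring and, more importantly, the same interpretation of the divisibility symbol $\vardiv$. Under $\iota^{-1}$, the distinguished constant $t \in \F((t^{\Gamma_m}))$ then corresponds to the element $t^m \in \PPF$.

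With this in hand, translate sentences by syntactic substitution. Given any $\L_t$-sentence $\phi$, let $\phi^*$ denote the $\L_t$-sentence obtained by replacing every occurrence of the constant symbol $t$ with the term $t^m$. By the isomorphism above,
$$\F((t^{\Gamma_m})) \models \phi \quad\Longleftrightarrow\quad \PPF \models \phi^*.$$
The map $\phi \mapsto \phi^*$ is manifestly computable, and the right-hand side is decidable by Theorem \ref{thm: ppf decidable}; composing the two yields the desired decision procedure.

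The only step requiring care is the verification that $\iota$ genuinely identifies the two $\L_{\val}$-structures; this is immediate from the fact that rescaling a Krull valuation by a positive rational preserves both the valuation ring and the induced divisibility relation. The remainder is pure bookkeeping, and I do not anticipate any additional obstacle.
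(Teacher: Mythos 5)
Your argument is correct and is essentially the paper's one-line proof spelled out in full: the paper simply invokes Theorem~\ref{thm: ppf decidable} with $t^{1/m}$ in place of $t$, which is precisely the exponent-rescaling $\L_{\val}$-isomorphism $\iota$ together with the syntactic substitution $t \mapsto t^m$ that you make explicit.
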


\begin{proof}
This follows from Theorem \ref{thm: ppf decidable}, with $t^{1/m}$ in place of $t$.
\end{proof}

\begin{theorem}
\label{thm: decidable containing hahn field}
Let $(L,v)$ be a tame field containing $\F_p(t)$. Suppose that $Lv$ and $vL$ are decidable in $\Lring$ and $\Log(v(t)$ respectively. Let $\F$ be a perfect decidable subfield of $Lv$ containing the relative algebraic closure of $\F_p$ in $Lv$ and let $\Gamma$ be the relative divisible hull of $\langle v(t)\rangle$ in $vL$. Suppose that $(\F((t^\Gamma)),v_t)$ is a valued subfield of $(L,v)$. Then, $(L,v)$ is decidable in $\L_t$
\end{theorem}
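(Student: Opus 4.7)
The plan is to apply Theorem \ref{thm: ake without kaplansky} to obtain a recursively enumerable axiomatisation of the $\L_t$-theory of $L$; the AKE-principle guarantees completeness, so decidability follows. The tame-field axioms are recursively given, and by hypothesis both the $\L_{\ring}$-theory of $Lv$ and the $\L_{\og}(v(t))$-theory of $vL$ are decidable. The remaining task is to show that the set $S$ of monic polynomials $f(X)\in A_{\ge 0}[X]$ with $L\models\phi_f$ is recursively enumerable.

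Before invoking the AKE-principle I verify its hypotheses. Let $M=\F((t^{\Gamma}))$ and let $K$, $K_M$ be the relative algebraic closures of $\F_p(t)$ in $L$ and $M$, respectively. By Lemmas \ref{lem: rel alg closure in Hahn field} and \ref{lem: hahn tame rel condition}, $K_M$ is a tame field with $vK_M=\Gamma$ and $K_Mv=\bar{\F}_p\cap\F$. Since $K/\F_p(t)$ is algebraic, $vK$ lies in the relative divisible hull of $\langle v(t)\rangle$ in $vL$, which is $\Gamma$, so $vK=\Gamma$. Similarly $Kv$ is algebraic over $\F_p$ and contained in $Lv$, and the hypothesis $\F\supseteq\bar{\F}_p\cap Lv$ forces $Kv\subseteq\bar{\F}_p\cap\F=K_Mv$, hence $Kv=K_Mv$. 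Thus $K/K_M$ is immediate and, as tame fields are algebraically maximal, $K=K_M$. Moreover $vL/vK=vL/\Gamma$ is torsion-free by the definition of relative divisible hull, so Theorem \ref{thm: ake without kaplansky} applies.

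The main step is to decide, for each monic $f\in A_{\ge 0}[X]$, whether $f$ has a root in $L$. Any such root is integral (as $f$ is monic) and algebraic over $\F_p(t)$, so it lies in $K=K_M\subseteq\F((t^{\Gamma}))\subseteq\F((t^{\Q}))$ (identifying $v(t)$ with $1\in\Q$). Corollary \ref{cor: finite ramification away from p} produces an $m\in\N$, effective in $f$, such that every root of $f$ in $\F((t^{\Q}))$ already lies in $\F((t^{\Gamma_m}))$. Hence every root of $f$ in $L$ belongs to $\F((t^{\Gamma}))\cap\F((t^{\Gamma_m}))=\F((t^{\Gamma\cap\Gamma_m}))$ (intersection taken inside $\F((t^{\Q}))$), and conversely any root in $\F((t^{\Gamma\cap\Gamma_m}))\subseteq L$ is a root in $L$.

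To make the test effective I identify $\Gamma\cap\Gamma_m$ with a $\Gamma_{m'}$ and compute $m'$. Factor $m=p^{a}\prod_{i=1}^{\ell}q_{i}^{a_{i}}$ with distinct primes $q_{i}\neq p$; since $vL$ is $p$-divisible, $\tfrac{1}{p^{\infty}}\Z\subseteq\Gamma$. By the Chinese Remainder Theorem, $\Gamma_m/\tfrac{1}{p^{\infty}}\Z\cong\bigoplus_{i}\Z/q_{i}^{a_{i}}\Z$, and since the orders $q_{i}^{a_{i}}$ are pairwise coprime, every subgroup of this direct sum splits componentwise. Thus the image of $\Gamma\cap\Gamma_m$ is $\bigoplus_{i}H_{i}$ with $H_{i}$ cyclic of order $q_{i}^{f_{i}}$, where $f_{i}$ is the largest $e\le a_{i}$ with $\tfrac{1}{q_{i}^{e}}\in\Gamma$, equivalently the largest $e$ for which $vL\models\exists y(q_{i}^{e}y=v(t))$. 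Each $f_{i}$ is computable by the assumed decidability of $vL$ in $\L_{\og}(v(t))$. Setting $m'=\prod_{i}q_{i}^{f_{i}}$ yields $\Gamma\cap\Gamma_m=\Gamma_{m'}$, and Corollary \ref{cor: simple ramification} supplies decidability of $\F((t^{\Gamma_{m'}}))$ in $\L_t$, in which $\phi_f$ can then be decided. Hence $S$ is in fact decidable. The main obstacle is exactly this identification $\Gamma\cap\Gamma_m=\Gamma_{m'}$ with effectively computable $m'$, which turns the abstract condition ``$f$ has a root in $\F((t^{\Gamma}))$'' into a test in a concrete decidable Hahn field; the remaining ingredients have already been assembled earlier in the paper.
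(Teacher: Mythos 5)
Your proof is correct and follows essentially the same route as the paper: reduce to deciding the set $S$ via Theorem \ref{thm: ake without kaplansky}, produce the effective bound $m$ from Corollary \ref{cor: finite ramification away from p}, cut it down to an $m'$ so that $\Gamma\cap\Gamma_m=\Gamma_{m'}$ using the decidability of $vL$, and then decide $\phi_f$ inside $\F((t^{\Gamma_{m'}}))$ via Corollary \ref{cor: simple ramification}. The only differences are cosmetic: you explicitly verify the running hypotheses of the AKE-principle (showing $K=K_M$ is algebraically maximal and $vL/vK$ torsion-free), which the paper leaves implicit, and you compute $m'$ through a CRT decomposition of $\Gamma_m/\tfrac{1}{p^\infty}\Z$, whereas the paper directly intersects the set of prime-power factors of $m$ with those $q^e$ for which $v(t)/q^e\in vK$ --- the same computation phrased slightly differently.
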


\begin{proof}
We will use Theorem \ref{thm: ake without kaplansky}. For this, we first note that the relative algebraic closure $K$ of $\F_p(t)$ in $L$ is contained in $\F((t^\Gamma))$. Indeed, let $K'$ be the relative algebraic closure of $\F_p(t)$ in $\F((t^\Gamma))$. Then $(K',v)$ is tame by Lemma \ref{lem: hahn tame rel condition}. In particular, $K'$ is algebraically maximal. Since $Kv$ is contained in $\F$, we have that $K'v=Kv$. Since $\Gamma$ is the relative divisible hull of $\langle v(t)\rangle$, which is equal to $vK$, we also have that $vK'=vK$. Hence, $K/K'$ is an immediate algebraic extension, so $K=K'$. Since $vK=\Gamma$, we have that $vL/vK$ is torsion free. We are therefore in the situation of Theorem \ref{thm: ake without kaplansky}, and it is again enough to show that there is a decision procedure for the set $S$ defined before Theorem \ref{thm: ake without kaplansky}. 

Now, let $f(X)\in\F_p[t][X]$ be monic. Let $m=\maxram(f)$, as in the first item of Lemma \ref{lem: alg bounds}. Then any root of $f$ in $\F[[t^\Q]]$ is already in $\F[[t^{\Gamma_m}]]$, where $\Gamma_m=\frac{1}{mp^\infty}\Z$ as above. Let $U$ be the set of factors of $m$. Let $V$ be the set of natural numbers $n$ such that $n$ is not divisible by $p$ and such that $\frac{u}{n}\in \Gamma$ for some $u\in\Z$, where $\langle v(t)\rangle$ is identified with $\Z$ in $\Gamma$. Note that $\frac{u}{n}\in \Gamma$ if and only if $\frac{u}{np^e}\in \Gamma$ for any $e\in \N$, since $\Gamma$ is $p$-divisible. Furthermore, we have that $\frac{u}{n}\in V$ if and only if $\frac{u'}{n}+v\in \Gamma$ for some $u'\in \{1,\ldots, n-1\}$ and $v\in\Z$. Since $\Z$ is a subgroup of $\Gamma$, this holds if and only if $\frac{u'}{n}\in \Gamma$. Using decidability of $vL$, we get that there is an algorithm which on input $f$ outputs the set
$$U\cap V=\{n_1,\ldots,n_\ell\}.$$
Indeed, since $m$ is not divisible by $p$, this algorithm outputs exactly the elements $n\in U$ for which $vL\models\bigvee_{0<u<n} \exists X(nX=u)$.

Define
$$m':=\prod_{i=1}^\ell n_i.$$
Since $\F[[t^\Gamma]]\subset\F[[t^\Q]]$, any root of $f$ in $\F[[t^\Gamma]]$ is already in $\F[[t^{\Gamma_m}]]$. 

Let $x\in \F[[t^{\Gamma_m}]]\cap\F[[t^\Gamma]]$. Then, since the support of $x$ is in $\Gamma_m$, we can write
$$x=\sum_{i\in I}a_it^{\frac{m_i}{n_ip^i}}$$
where $I\subset \N$, $m_i\in\N$, and $n_i\in U$. On the other hand, since the support of $x$ is in $\Gamma$ and since no $n_i$ is divisible by $p$, we have that each $n_i$ is in $V$. Hence, $x\in\F[[t^{\Gamma_{m'}}]]$. Conversely, if $x\in\F[[t^{\Gamma_{m'}}]]$, we write
$$x=\sum_{i\in I}a_it^{\frac{m_i}{n_ip^i}}$$
where each $n_i\in U\cap V$. Since each $n_i$ divides $m$, we get that $x\in \F[[t^{\Gamma_m}]]$ and since $n_i\in V$, we get that $x\in\F[[t^\Gamma]]$. We conclude that $\F[[t^{\Gamma_m}]]\cap\F[[t^\Gamma]]=\F[[t^{\Gamma_{m'}}]]$. We can thus use the decision procedure from Corollary \ref{cor: simple ramification} for $\F((t^{\Gamma_{m'}}))$ to determine if $f$ has a root in $\F[[t^\Gamma]]$ or not. Since any root $x\in L$ of $f$ is already in $\F((t^\Gamma))$ as noted above, we can now use this same decision procedure to determine if $f$ has a root in $K$ or not, and we are done.
\end{proof}

We now get Theorem \ref{thm: main decidability result} as an immediate consequence of Theorem \ref{thm: decidable containing hahn field}, since $\F((t^G))$ is a subfield of $\F((t^\Gamma))$, with $G$ being the relative divisible hull of $\langle v(t)\rangle$ in $\Gamma$. 

\section*{Acknowledgment}
I wish to thank Jochen Koenigsmann for introducing me to this research question and for continued guidance and proofreading throughout the work. I also wish to thank Ehud Hrushovski for insightful comments, in particular for suggesting to use the work on finite automata. Finally, I wish to thank Konstantinos Kartas for proofreading and many fruitful discussions.

\bibliographystyle{alpha}
\bibliography{biblio2}

\begin{thebibliography}{Wha57}

\bibitem[Abh56]{MR80647}
Shreeram Abhyankar.
\newblock Two notes on formal power series.
\newblock {\em Proc. Amer. Math. Soc.}, 7:903--905, 1956.

\bibitem[AF16]{AnFe16}
Sylvy Anscombe and Arno Fehm.
\newblock The existential theory of equicharacteristic henselian valued fields.
\newblock {\em Algebra Number Theory}, 10:665--683, 2016.

\bibitem[AK65]{AxKo_II}
James Ax and Simon Kochen.
\newblock {Diophantine problems over local fields II. A} complete set of axioms
  for $p$-adic number theory.
\newblock {\em Am. J. Math.}, 87:631--648, 1965.

\bibitem[AS03]{MR1997038}
Jean-Paul Allouche and Jeffrey Shallit.
\newblock {\em Automatic sequences}.
\newblock Cambridge University Press, Cambridge, 2003.
\newblock Theory, applications, generalizations.

\bibitem[Dit18]{dittmann2018a}
Philip Dittmann.
\newblock {\em A model-theoretic approach to the arithmetic of global fields}.
\newblock PhD thesis, University of Oxford, 2018.

\bibitem[DS03]{MR2018551}
Jan Denef and Hans Schoutens.
\newblock On the decidability of the existential theory of {${\Bbb
  F}_p[\![t]\!]$}.
\newblock In {\em Valuation Theory and its Applications, {V}ol. {II}
  ({S}askatoon, {SK}, 1999)}, volume~33 of {\em Fields Institute
  Communications}, pages 43--60. American Mathematical Society, Providence, RI,
  2003.

\bibitem[EP05]{pre05}
Antonio~J. Engler and Alexander Prestel.
\newblock {\em Valued fields}.
\newblock Springer Monographs in Mathematics. Springer-Verlag, Berlin, 2005.

\bibitem[Ers65]{MR0207686}
Yuri~L. Ershov.
\newblock On elementary theories of local fields.
\newblock {\em Algebra i Logika Sem.}, 4:5--30, 1965.

\bibitem[Hah07]{hahn1907}
Hans Hahn.
\newblock Über die nichtarchimedischen größensysteme.
\newblock {\em Sitzungsberichte der Kaiserlichen Akademie der Wissenschaften,
  Wien, Mathematisch - Naturwissenschaftliche Klasse}, 1907.

\bibitem[HU79]{Hopcroft+Ullman/79/Introduction}
John~E. Hopcroft and Jeff~D. Ullman.
\newblock {\em Introduction to Automata Theory, Languages, and Computation}.
\newblock Addison-Wesley Publishing Company, 1979.

\bibitem[Kap42]{kaplansky1942}
Irving Kaplansky.
\newblock Maximal fields with valuations.
\newblock {\em Duke Math. J.}, 9:303--321, 06 1942.

\bibitem[Kar20]{konstantinos2020decidability}
Konstantinos Kartas.
\newblock Decidability via the tilting correspondence. {P}reprint,
  ar{X}iv:2001.04424, 2020.

\bibitem[Ked06]{MR2289431}
Kiran~S. Kedlaya.
\newblock Finite automata and algebraic extensions of function fields.
\newblock {\em J. de Theor. des Nr. de Bordeaux}, 18:379--420, 2006.

\bibitem[Kuh16]{Ku16}
Franz-Viktor Kuhlmann.
\newblock The algebra and model theory of tame valued fields.
\newblock {\em J. Reine Angew. Math.}, 719:1--43, 2016.

\bibitem[Kuh22]{kuhlmann2018subfields}
Franz-Viktor Kuhlmann.
\newblock Subfields of algebraically maximal {K}aplansky fields.
\newblock {\em Commun Algebra}, {DOI}: 10.1080/00927872.2022.2032118, 2022.

\bibitem[Lis21]{lisinski}
Victor Lisinski.
\newblock Decidability of positive characteristic tame {H}ahn fields in
  $\mathcal{L}_t$. {P}reprint, ar{X}iv:2108.04132, 2021.

\bibitem[Lis23]{lisinski2}
Victor Lisinski.
\newblock Approximation and algebraicity in positive characteristic {H}ahn
  fields. {P}reprint, ar{X}iv:2301.06177, 2023.

\bibitem[Mac39]{MR610}
Saunders MacLane.
\newblock The universality of formal power series fields.
\newblock {\em Bulletin of the American Mathematical Society}, 45:888--890,
  1939.

\bibitem[Poo93]{MR1225257}
Bjorn Poonen.
\newblock Maximally complete fields.
\newblock {\em Enseign. Math. (2)}, 39(1-2):87--106, 1993.

\bibitem[RZ60]{MR114855}
Abraham Robinson and Elias Zakon.
\newblock Elementary properties of ordered abelian groups.
\newblock {\em Trans. Amer. Math. Soc.}, 96:222--236, 1960.

\bibitem[Sip13]{sipser13}
Michael Sipser.
\newblock {\em Introduction to the Theory of Computation}.
\newblock Course Technology, Boston, MA, third edition, 2013.

\bibitem[TZ12]{tenzie}
Katrin Tent and Martin Ziegler.
\newblock {\em A course in model theory}, volume~40 of {\em Lecture Notes in
  Logic}.
\newblock Association for Symbolic Logic, La Jolla, CA; Cambridge University
  Press, Cambridge, 2012.

\bibitem[War89]{warner1989topological}
Seth Warner.
\newblock {\em Topological Fields}.
\newblock ISSN. Elsevier Science, 1989.

\bibitem[Wha57]{whaples1957}
George Whaples.
\newblock Galois cohomology of additive polynomial and $n$-${\rm th}$ power
  mappings of fields.
\newblock {\em Duke Math. J.}, 24(2):143--150, 06 1957.

\bibitem[Wof21]{4052915}
Eric Wofsey.
\newblock When can we embed field extensions into one another?
\newblock Mathematics Stack Exchange, 2021.
\newblock URL: https://math.stackexchange.com/q/4052915 (version: 2021-03-07).

\end{thebibliography}

\Addresses

\appendix

\section{More on finite automata}
\label{appendix}

We denote by $\rev:\Sigma^*\to\Sigma^*$ the function sending $s_1\cdots s_n$ to $s_n\cdots s_1$. The following appears as Theorem 4.3.3 in \cite{MR1997038}.
\begin{theorem}
\label{thm: reverse DFAO}
There is an algorithm $\REV$ which takes as input a DFAO $M$ and returns a DFAO $M'$ such that $f_{M'}=f_M\circ\rev$.
\end{theorem}

We fix some notation for the following lemma. Let $n$ be a positive integer. Given an NFA $M = (Q, \Sigma, \delta, q_0, F)$ and a string $w\in \Sigma^*$, denote by $a_M(w)$ the number of accepting paths for $w$ in $M$. Let
\begin{align*}
    g_M : \   & \Sigma^* \to \Z/n\Z \\
            & w\mapsto [a_M(w)].
\end{align*}

\begin{lemma}[{{\cite[Lemma 2.2.2]{MR2289431}}}]
\label{lem: counting paths}
There is an algorithm $\accpaths$ which takes as input an $NFA$ $M$ and outputs a DFAO $M'$ such that $g_M=f_{M'}$, with $g_M$ being as defined above.
\end{lemma}

It is often convenient to consider finite automata as edge-labeled directed graphs, i.e. as directed graph $(V,E)$ together with a labeling function $\ell$ from $E$ to some set $S$. In particular, this will be useful to verify effectiveness of results in this section.

\begin{remark}
There is an algorithm $\subgraph$ which takes as input a finite edge-labeled directed graph $G=(V,E,\ell)$ and a subset $U\subset V$ and returns the edge set $E_U$ of the induced subgraph $G[U]$ together with the labelling function of $G$ restricted to $E_U$. It is defined as follows.
\begin{algorithm}[H]
\caption*{$\subgraph(G=(V,E,\ell), U\subset V)$}
\label{alg:subgraph}
\begin{algorithmic}[1]
\State {$E_U\gets\emptyset$}
\State {$\ell_U\gets \emptyset$}
\For {$(u,v)\in U^2$}    
    \If {$(u,v)\in E$}
        \State $E_U \gets E_U\cup \{(u,v)\}$
        \For{$s\in \ell(E)$}
            \If{$\ell(u,v)=s$}
                \State $\ell_U\gets \ell_U\cup\{((u,v),s)\}$
            \EndIf    
        \EndFor        
    \EndIf    
\EndFor
\State \Return $(E_U,\ell_U)$
\end{algorithmic}
\end{algorithm}
\end{remark}

\begin{definition}
\label{def: transition graph}
Let $M=(Q,\Sigma,\delta,q_0,\Delta,\tau)$ be a DFAO or an NFA. The \textbf{transition graph} of $M$ is the edge-labeled directed graph on the vertex set $Q$, with an edge from $q\in Q$ to $q'\in Q$ labeled by $s\in \Sigma$ if $\delta(q,s)=q'$.
\end{definition}

When illustrating transition graphs, we will use double circles around a state to illustrate that it is an accepting state. Furthermore, we will omit all edges that do not lead to accepting paths, and all states only reached by such paths. For example, the following is an illustration of a DFA which only accepts the string $s_1\cdots s_n$.
  

\begin{center}
\begin{tikzpicture}
\node[state] (q0) {$q_0$};
\node[state, right of=q0] (q1) {$q_1$};
\node[right of=q1] (dots) {$\cdots$};
\node[state, accepting, right of=dots] (qn) {$q_n$};
\draw (q0) edge[above] node{$s_1$} (q1);
\draw (q1) edge[above] node{$s_2$} (dots);
\draw (dots) edge[above] node{$s_n$} (qn);
\end{tikzpicture}
\end{center}

\begin{remark}
The following defines an algorithm which takes as input a DFAO or an NFA $M=(Q,\Sigma,\delta,q_0,\Delta,\tau)$ and returns the edges $E$ and the labelling $\ell$ of the transition graph of $M$.
\begin{algorithm}[H]
\caption*{$\transition(M=(Q,\Sigma,\delta,q_0,\Delta,\tau))$}\label{alg:transition}
\begin{algorithmic}[1]
\State $E \gets \emptyset$
\State $\ell \gets \emptyset$
\For{$(q,q',s)\in Q^2\times\Sigma$} 
    \If{$\delta(q,s)=q'$}
        \State $E \gets E \cup \{(q,q')\}$
        \State $\ell \gets \ell \cup \{((q,q'),s)\}$
    \EndIf
\EndFor
\State \Return $(E,\ell)$
\end{algorithmic}
\end{algorithm}
\end{remark}

\begin{remark}
\label{rem: well-formed bound}
To verify that $M=(Q,\Sigma_p,\delta,q_0,\Delta,\tau)$ is well-formed, it is enough to consider strings of length at most $m=(3p+1)\lvert Q\rvert+2$. Indeed, suppose that $n>m$ and that $M$ accepts a string $w=s_1\cdots s_n$ which is not the valid base $p$-expansion of any $a\in\frac{1}{p^\infty}\N$. By definition, this means that $w$ satisfies one of the following items.
\begin{enumerate}
    \item $s_1=0$;
    \item $s_n=0$;
    \item no $s_i$ is equal to the radix point;
    \item $s_i$ and $s_j$ are both equal to the radix point with $i\neq j$.
\end{enumerate}

Suppose that the first or third item holds. Since $n>\lvert Q\rvert+1$, we have that the sequence $\delta^*(q_0,s_1s_2)$, $\delta^*(q_0,s_1s_2s_3)$, $\ldots$, $\delta^*(q_0,s_1\cdots s_n)$ contains two identical states. Let $i,j\in\{2,\ldots,n\}$ be distinct such that $\delta^*(q_0,s_1\cdots s_i)=\delta^*(q_0,s_1\cdots s_j)$. Without loss of generality, assume that $i<j$ and write $s_{\hat{i}}=s_1\cdots s_i s_{j+1}\cdots s_n$. Then $\delta^*(q_0,s_{\hat{i}})=\delta^*(q_0,w)$. In particular, $M$ accepts the string $s_1\cdots s_i s_{j+1}\cdots s_n$ which is of length strictly less than $n$ beginning with $s_1$. By induction, we get that $M$ accepts a string $w'$ of length $m$, with elements being a subset of $\{s_1,\ldots, s_n\}$, which also begins with $s_1$. By construction, if $w$ satisfies the first item then so does $w'$, and if $w$ satisfies the third item then so does $w'$. Hence $w'$ is not a valid base $p$ expansion. By the same argument, fixing $s_n=0$ instead of $s_1$, we get that $M$ accepts a string of length $m$ which is not a valid base $p$ expansion if the second item hold.

Suppose now that the fourth item holds. Let $e_1,\ldots,e_n$ be the sequence of edges corresponding to the sequence of connected vertices $$q_0,\delta(q_0,s_1),\delta^*(q_0,s_1s_2),\ldots,\delta^*(q_0,s_1\cdots s_n).$$ Let $i$ and $j$ be such that $s_i$ is the first radix point of $w$ and $s_{i+j}$ is the second radix point of $w$. We can assume that the subsequences $(e_k)_{1\le k < i}$, $(e_k)_{i<k < i+j}$ and $(e_k)_{i+j<k\le n}$ all separately only contain distinct edges. Indeed, if $e_k=e_\ell$ for some $k<\ell$ with
$$(k,\ell)\in \{1,\ldots,i-1\}^2 \cup \{i+1,\ldots,i+j-1\}^2 \cup \{i+1+1,\ldots,n\}^2$$
then we repeatedly replace $w$ with the string $s_1\cdots s_ks_{\ell+1}\cdots s_n$, which by construction also contains two radix points and is accepted by $M$. We will see that this string must have length less than or equal to $m$, thus showing that $M$ accepts a string of length less than or equal to $m$ which is not a valid base $p$ expansion.

Since $\Sigma_p$ contains $p+1$ elements, the total number of edges of the transition graph of $M$ is $(p+1)\lvert Q\rvert$, and the total number of edges that are not labeled by the radix point is $p\lvert Q\rvert$. Since $(e_k)_{1\le k <i}$ only contains distinct vertices, none labeled by the radix point, we have that $i\le p\lvert Q\rvert+1$. Similarly, we get that $j\le p\lvert Q\rvert+1$. By the assumption that $(e_k)_{i+j< k\le n}$ only contains distinct edges, the length of this sequence must be bounded above by the total number of edges in the transition graph. Hence, $n-i-j\le (p+1)\lvert Q\rvert$. Combining the obtained inequalities gives
$$n\le (3p+1)\lvert Q\rvert +2,$$
and we are done.
\end{remark}

\begin{remark}
\label{rem: well-formed effective}
By the bound obtained in Remark \ref{rem: well-formed bound}, we conclude that there is an algorithm which takes as input a DFAO $M=(Q,\Sigma_p,\delta,q_0,\Delta,\tau)$ and returns $\true$ if $M$ is well-formed, and $\false$ otherwise. We denote this algorithm by $\wellformed$. It is defined as follows.
\begin{algorithm}[H]
\caption*{$\wellformed(Q,\Sigma_p,\delta,q_0,\Delta,\tau)$}\label{alg:wellformed}
\begin{algorithmic}[1]
\For{$s=s_1\cdots s_n\in \Sigma_p^*$ with $n\le(3p+1)\lvert Q\rvert+2$} 
    \LineComment{Verify that no string starting or ending with $0$ is accepted.}
    \If{$s_1=0 \orbf s_n=0$}
        \If{$f_M(s)\neq 0$}
            \State \Return $\false$
        \EndIf
    \EndIf
    \LineComment{Verify that every accepted string has exactly one radix point.}
    \If{$s_i\neq .$ for all $i\in\{1,\ldots,n\} \orbf s_i=s_j=.$ for some $i\neq j$}
        \If{$f_M(s)\neq 0$}
            \State \Return $\false$
        \EndIf
    \EndIf
\EndFor
\State \Return $\true$

\end{algorithmic}
\end{algorithm}

\end{remark}

To verify that a DFAO is well-ordered, we need a bit more.

\begin{definition}
\label{def: relevant state}
A state $q\in Q$ is \textbf{relevant} if there exists an accepting state reachable from $q$. If $q$ is not relevant, we say that it is \textbf{irrelevant}.
\end{definition}

\begin{remark}
\label{rem: relevant states}
There is an algorithm which takes as input a DFAO $M$ together with a state $q$ of $M$ and returns the relevant states reachable from $q$. We denote it by $\relevantstates$. It is defined as follows. As in Remark \ref{rem: bounded strings to reach}, we only need to consider strings of length less than or equal to $\lvert Q\rvert$. 
\begin{algorithm}[H]
\caption*{$\relevantstates(M=(Q,\Sigma,\delta,q_0,\Delta,\tau),q)$}\label{alg:relevant}
\begin{algorithmic}
\State $Q_q\gets\emptyset$
\For{$s=s_1\cdots s_n\in\Sigma^*$ with $n\le \lvert Q\rvert$}
    \If{$\tau(\delta^*(q,s))\neq 0$}
        \State $Q_q\gets Q_q\cup\{\delta^*(q,s_1\cdots s_i) \ | \ \text{for } 0\le i \le n\}$
    \EndIf
\EndFor
\State \Return $Q_q$
\end{algorithmic}
\end{algorithm}
We write $\relevantstates(M)$ in place of $\relevantstates(M,q_0)$. Note that if $M$ is minimal, then all states are reachable from $q_0$, so all relevant states of $M$ are in this case given by $\relevantstates(M)$. 
\end{remark}

\begin{definition}
\label{def: postradix}
Let $M$ be a DFAO with input alphabet $\Sigma_p$. We say a state $q \in Q$ is \textbf{preradix} (resp. \textbf{postradix}) if there exists a valid base $p$ expansion $s = s_1 \cdots s_n$ accepted by $M$ with $s_k$ equal to the radix point such that $q = \delta^*(q_0, s_1\cdots s_i)$ for some $i < k$ (resp. for some $i \ge k$).
\end{definition}

\begin{remark}
If $M$ has input alphabet $\Sigma_p$ and is well-formed, then no accepted state can be both preradix and postradix. Indeed, suppose that there is an accepted state $q$ which is both preradix and postradix. Then, there are valid base $p$ expansions $s_1\cdots s_m$ and $s'_1\cdots s'_n$ accepted by $M$ with radix points $s_k$ and $s'_\ell$ respectively such that
$$q = \delta^*(q_0, s_1\cdots s_i)=\delta^*(q_0, s'_1\cdots s'_j),$$
with $i<k$ and $j\ge \ell$. We then have that
$$\delta^*(q_0,s_1\cdots s_m)=\delta^*(q_0, s'_1\cdots s'_j s_{i+1}\cdots s_m)$$
is an accepted state reached by a string with two radix points, which is a contradiction. 
\end{remark}

\begin{remark}
\label{rem: postradix alg}
By definition, any postradix state $q$ is relevant. Furthermore, any relevant state is postradix if and only if there is a string $w=s_1\cdots s_n$ not containing any radix point such that $\delta^*(q,w)$ is an accepting state. This, combined with Remark \ref{rem: bounded strings to reach} shows that the following defines an algorithm which takes as input a well-formed DFAO $M$ and outputs the postradix states of $M$. A similar algorithm exists for preradix states, though we will not need it.
\begin{algorithm}[H]
\caption*{$\postradix(M=(Q,\Sigma_p,\delta,q_0,\Delta,\tau))$}\label{alg:postradix}
\begin{algorithmic}
\State $\text{PostradixStates} \gets \emptyset$
\State $\RelevantStates \gets \relevantstates(M)$
\For {$q\in \RelevantStates$}
    \For {$w=s_1\cdots s_n\in\{0,\ldots,p-1\}^*$ with $n\le \lvert Q\rvert$}
        \If{$\tau(\delta^*(q,w))\neq 0$} 
            \State $\text{PostradixStates} \gets \text{PostradixStates} \cup \{q\}$
        \EndIf
    \EndFor
\EndFor
\State \Return $\text{PostradixStates}$
\end{algorithmic}
\end{algorithm}
\end{remark}

We will now consider a characterisation of transition graphs which will allow us to tell if a DFAO is well-ordered.

\begin{definition}
\label{def: saguaro}
Let $G=(V,E)$ be a directed graph and let $v\in V$. We say that $G$ is a \textbf{rooted saguaro}, and that $v$ is a \textbf{root} of $G$, if the following hold.
\begin{enumerate}
    \item Each vertex of $G$ lies on at most one cycle, up to permutation of the vertices by rotation.\footnote{With cycle, we mean a closed directed path with no repeated vertices apart from the first and last. In \cite{MR2289431}, the term \textit{minimal cycle} is used for what we call a cycle.}
    \item There exists directed paths from $v$ to each vertex of $G$.
\end{enumerate}
An edge of a rooted saguaro is \textbf{cyclic} if it lies on a cycle and \textbf{acyclic} otherwise.
\end{definition}

\begin{remark}
\label{rem: saguaro effective}
There is an algorithm which takes as input a finite directed graph $G=(V,E)$ and an element $v\in V$ and outputs $\true$ if $G$ is a rooted saguaro with root $v$, and $\false$ otherwise. We denote this algorithm by $\saguaro$. To see that such an algorithm indeed exists, we first let $\cycles$ be an algorithm which takes as input a directed graph and return the set of finitely many cycles of $G$ (for example by going through all possible paths of length at most $\lvert V\rvert+1$). From this, we can check that each vertex only appears in one such cycle, up to rotation of the vertices. For the second item, it is enough to go through directed paths of length at most $\lvert V\rvert-1$ to verify that each vertex of $G$ is in such a path. Indeed, if there is a path $(v_1,v_2), \ (v_2,v_3), \ldots, \ (v_n,v_{n+1})$ from $v=v_1$ to $w=v_{n+1}$ with $n\ge \lvert V\rvert$, then $v_i=v_j$ for some $i\neq j$. This gives a path $(v_1,v2), \ldots, \ (v_i,v_{j+1}), \ldots \ (v_n,v_{n+1})$ of length strictly less than $n$.
\end{remark}

\begin{definition}
\label{def: p-labeling}
Let $G=(V,E)$ be a directed graph. A \textbf{proper $p$-labeling} of $G$ is a function $\ell:E\to \{0,\ldots,p-1\}$ with  the following properties hold for all $v,w,x \in V$.
\begin{enumerate}
    \item If $w\neq x$ and $(vw,vx)\in E^2$, then $\ell(vw)\neq \ell(vx)$.
    \item If $vw\in E$ lies on a cycle and $vx\in E$ does not lie on a cycle, then $\ell(vw)>\ell(vx)$
\end{enumerate}
\end{definition}

\begin{remark}
There is an algorithm $\plabeling$ which takes as input a directed graph $G=(V,E)$ together with a function $\ell: E\to\{0,\ldots,p-1\}$ and returns $\true$ if $\ell$ is a proper $p$-labeling of $E$ and $\false$ otherwise. It is defined as follows.
\begin{algorithm}[H]
\caption*{$\plabeling(V,E,\ell)$}
\label{alg:plabeling}
\begin{algorithmic}[1]
\State {$\text{Cycles} \gets \cycles(V,E)$}
\For {$(v,w,x)\in V^3$}
    \If{$w\neq x \andbf vw\in E \andbf vx\in E$}
        \If{$\ell(v,w)=\ell(v,x)$}
            \State \Return $\false$
        \EndIf
    \EndIf
    \If{$vw\in \text{Cycles} \andbf vx\in E\setminus \text{Cycles}$}
        \If {$\ell(vw)\le\ell(vx)$}
            \State \Return $\false$
        \EndIf
    \EndIf
\EndFor
\State \Return $\true$
\end{algorithmic}
\end{algorithm}
\end{remark}

\begin{theorem}[{{\cite[Theorem 7.1.6]{MR2289431}}}]
\label{thm: well-ordered DFA}
Let $M$ be a DFA with input alphabet $\Sigma_p$ and suppose that $M$ is minimal and well-formed. For any state $q$ of $M$, let $G_q$ be the subgraph of the transition graph consisting of relevant states reachable from $q$. Then $M$ is well-ordered if and only if for each relevant postradix state $q$, we have that $G_q$ is a rooted saguaro with
root $q$, equipped with a proper $p$-labeling.
\end{theorem}

We will now fix some notation to allow us to talk go between elements in $\PRFq$ and their corresponding automata. If $a\in \F_q$, we denote by $aM$ the DFAO given from $M$ by replacing the output function $\tau$ with $a\tau$. By definition, this implies that $\Pow(aM)=a\Pow(M)$. We write $-M$ instead of $-1M$. Note that $\Pow(M)=0$ if and only if $M$ has no reachable accepting states. When $M$ is minimal, this is equivalent to $\relevantstates(M)=\emptyset$.

To define a DFA corresponing to $1\in \PRFq$, we need a DFA $$\mathbbm{1}=(Q,\Sigma_p,\delta,q_0,\{0,1\},\tau\})$$
which accepts only the string $s(0)$, i.e. the string only consisting of a single radix point. More precisely, this is defined as follows.
\begin{itemize}
    \item $Q=\{q_0,q_1,q_2\}$, where $q_i=i$.
    \item $\delta(q_0,s)=q_1$ for all $s\in\{0,\ldots,p-1\}$.
    \item $\delta(q_0,s(0))=q_2$.
    \item $\delta(q_i,s)=q_1$ for $i\in \{1,2\}$ and for all $s\in \Sigma_p$.
    \item $\tau(q_i)=0$ for $i\in\{0,1\}$ and $\tau(q_2)=1$.
\end{itemize}
With this definition, $\mathbbm{1}$ is minimal and well-ordered, and $\Pow(\mathbbm{1}=1$.

\begin{remark}
\label{rem: linearisation}
We can write any $x\in\PRFq$ as a linear combination over $\F_q$. In particular, if $M$ is a  well-ordered DFAO $M$, we get 
$$\Pow(M)=\sum_{i=1}^{q-1} a_i\sum_{r\in S_i}t^r\in\PRFq$$
where $a_i\in\F_q^*$ and $a_i\neq a_j$ for $i\neq j$, and where $S_i$ is a well-ordered, possibly empty, subset of $\frac{1}{p^\infty}\Z$. With this in mind, there is an algorithm $\lincomb$ which takes as input a DFAO $M$ with output alphabet $\F_q$ and returns a set of tuples $\{(a_i,M_i)\ | \ i\in \{1,\ldots,q-1\}\}$ where the $a_i\in\F_q$ are all distinct and the $M_i$ are minimal DFAs with input alphabet $\Sigma_p$ such that, if $M$ is well-ordered, we have
\begin{align}
\label{eqn: pow_linearisation}
    \Pow(M)=\sum_{i=1}^{q-1} a_i\Pow(M_i).
\end{align}
If $M=(Q,\Sigma_p,\delta,q_0,\F_q,\tau)$, let $M'_i$ be the DFA given by $(Q,\Sigma_p,\delta,q_0,\F_q,\tau_i)$, where $\tau_i(q)=1$ if $\tau(q)=a_i$ and $\tau_i(q)=0$ otherwise. Then $(M'_i)_{1\le i \le n}$ satisfies (\ref{eqn: pow_linearisation}). Taking $M_i$ to be $\mindfa(M'_i)$ gives the desired result. By construction, we also have that $M$ is well-ordered if and only if all of the $M_i$ are well-ordered. Indeed, if $M_i$ is not well-ordered, then there is an infinite descending sequence $(v_j)_{j\in \N}$ of elements in $\frac{1}{p^\infty}\N$ such that $M_i$ accepts the valid base $p$-expansion of each $v_j$. By definition of $\tau_i$, this implies that $M$ also accepts the valid base $p$-expansions of each $s_j$, so $M$ is not well-ordered. Conversely, suppose that $M$ is not well-ordered, accepting the valid base $p$ expansions of an infinite descending sequence $(v_j)_{j\in \N}$. By definition, this means that $f_M(s(v_j))\neq 0$ for all $j$. By the pigeonhole principle, there is an $i\in \{1,\ldots,q-1\}$ and an infinite subsequence $(v'_j)_{j\in \N}$ such that $f_M(s(v'_j))=a_i$. By definition of $\tau_i$, this implies that $M_i$ accepts the valid base $p$ expansions of $(v'_j)_{j\in \N}$ as well, so $M_i$ is not well-ordered.
\end{remark}

\begin{remark}
\label{rem: equals}
Using $\lincomb$, we see that there is an algorithm $\equals$ which takes as input two well-ordered DFAOs $M$ and $N$ with output alphabet $\F_q$, returning $\true$ if $\Pow(M)=\Pow(N)$ and $\false$ otherwise. Since $\mindfa$ gives a unique minimal DFA, up to renaming the state, the algorithm $\equals$ amounts to checking that, for every $a\in\F_q$, if $(a,M')$ and $(a,N')$ are tuples in $\lincomb(M)$ and $\lincomb(N)$, then $M'$ and $N'$ are equal, up to renaming the states.
\end{remark}

\begin{remark}
\label{rem: well-ordered DFAO decidable}
From Theorem \ref{thm: well-ordered DFA} and Remark \ref{rem: linearisation}, we conclude that there is an algorithm which takes as input a DFAO $M$ with input alphabet $\Sigma_p$ and returns $\true$ if $M$ is well-ordered, and $\false$ otherwise. We denote this algorithm by $\wellord$. It is defined as follows.

\begin{algorithm}[H]
\caption*{$\wellord(M=(Q,\Sigma_p,\delta,q_0,\Delta,\tau))$}
\label{alg:wellord}
\begin{algorithmic}[1]
\For{$(a,N)\in\lincomb(M)$}
    \LineComment{Verify that $N$ satisfies the properties of Theorem \ref{thm: well-ordered DFA}.} 
    \If {$\notbf \ \wellformed(N)$}
        \State \Return $\false$
    \EndIf
    \State $(E,\ell) \gets \transition (N)$
    \State $\RelevantStates \gets \relevantstates(N,q_0)$
    \State $\PostradixStates \gets \postradix(N)$
    \For {$q\in\RelevantStates \cap \PostradixStates$}
        \State {$Q_q\gets\relevantstates(N,q)$}
        \State {$(E_q,\ell_q)\gets\subgraph((Q,E,\ell),Q_q)$}
        \If {$\notbf \saguaro((Q_q,E_q)$}
            \State \Return $\false$
        \EndIf
        \If{$\notbf \plabeling(Q_q,E_q,\ell_q)$}
            \State \Return $\false$
        \EndIf
    \EndFor
\EndFor
\State \Return $\true$
\end{algorithmic}
\end{algorithm}
\end{remark}

\begin{remark}
\label{rem: list well-ordered}
We can now define the algorithm $\listwellord\_\F_q$ in Lemma \ref{lem: list well ord}. On input $n$, this algorithm returns the $n$:th DFAO in $\calD(\Sigma_p,\F_q)$ which is well-formed, which is verified using $\wellord$.
\end{remark}

In Section \ref{sec: decidability of PPF}, we will use Theorem \ref{thm: algebraic iff p-automatic} to determine which monic one variable polynomials over $\F_p[t]$ have roots in certain Hahn fields. To this end, we will 
need the following two lemmas describing an effective procedure for arithmetic of DFAOs representing elements in $\PRFq$. They appear implicitly in \cite{MR2289431} as Lemma 7.2.1 and Lemma 7.2.2 respectively. Again, the proofs are entirely due to Kedlaya and are included to emphasise effectiveness.

\begin{lemma}
\label{lem: DFAO addition}
There is an algorithm $\add$ which takes as input two well-ordered DFAOs $M$ and $M'$, both having input alphabet $\Sigma_p$ and output set $\F_q$, and outputs a well-ordered DFAO $N$ with the same input alphabet and output set such that $\Pow(N)=\Pow(M)+\Pow(M')$.
\end{lemma}

\begin{proof}
Let $M=(Q,\Sigma_p,\delta,q_0,\F_q,\tau)$ and $M'=(Q',\Sigma_p,\delta',q'_0,\F_q,\tau')$. Define
$$N=(Q\times Q', \Sigma_p,\tilde{\delta},(q_0,q'_0), \F_q,\tilde{\tau})$$
where $\tilde{\delta}((q,q'),s)=\delta(q,s)\times\delta'(q',s)$ and $\tilde{\tau}(q,q')=\tau(q)+\tau'(q')$. By construction, $N$ is a DFAO. Let $w\in\Sigma^*$ be a valid base $p$ expansion. We have that
\begin{align*}
    f_N(w) & = \tilde{\tau}(\tilde{\delta}^*((q_0,q'_0),w)) \\
    & = \tilde{\tau}(\delta^*(q_0,w),\delta'^*(q'_0,w)) \\
    & = \tau(\delta^*(q_0,w))+\tau'(\delta^*(q'_0,w)) \\
    & = f_M(w)+f_{M'}(w).
\end{align*}
Hence, $f_N=f_{M}+f_{M'}$ and so $\Pow(N)=\Pow(M)+\Pow(M')$. We define $\add$ to return $N$ on input $M$.
\end{proof}

In the following result, we will consider reversed base $p$ expansions. This will not just be a matter of reversing valid base $p$-expansions, but we will also allow for leading and trailing zeroes. This is to make sense of base $p$ addition with carries. More precisely, if $w_1=s_1\cdots s_m$ and $w_2=t_1\cdots t_m$, where $s_m=t_m=0$, $w_1$ and $w_2$ both contain exactly one radix point in the same position $k$, we make the make the following definitions for $i\in \{1,\ldots,k-1\}\cup \{k+1,\ldots,m\}$.
 \begin{align*}
    c_0 & =0 \\
    u_i & =s_i + t_i + c_{i-1} \mod p \\
    c_i & = 1 \ \text{ if } \ s_i + t_i + c_{i-1} \ge p \\
    c_i & = 0 \ \text{ if } \ s_i + t_i + c_{i-1}< p \\
    c_k & = c_{k-1}
 \end{align*}
We say that $u_1\cdots u_{k-1} . u_{k+1}\cdots u_m$ is the base $p$ addition with carries of $w_1$ and $w_2$.

\begin{lemma}
\label{lem: DFAO multiplication}
There is an algorithm $\mult$ which takes as input two well-ordered DFAOs $\X$ and $\Y$ both having input alphabet $\Sigma_p$ and output set $\F_q$, and outputs a well-ordered DFAO $\W$ with the same input alphabet and output set such that $\Pow(\W)=\Pow(\X)\Pow(\Y)$.
\end{lemma}

\begin{proof}
Suppose that $\mult$ is defined on minimal well-ordered DFAs. Denote this restriction of $\mult$ by $\multdfa$. Let
$$\lincomb(\X)=\{(a_i,\X_i) \ | \ i\in \{1,\ldots, 1-q\}\}$$ 
and let 
$$\lincomb(\Y)=\{(a_i,\Y_i) \ | \ i\in \{1,\ldots, 1-q\}\}.$$
Then 
$$\Pow(\X)\Pow(\Y) = \sum_{i=1}^{q-1}\sum_{j=1}^{q-1} a_ia_j\Pow(\X_i)\Pow(\Y_j).$$
Thus, $\mult$ can be defined in the following way.
\begin{algorithm}[H]
\caption*{$\mult(\X,\Y)$}
\label{alg:mult}
\begin{algorithmic}[1]
\State $\{(a_i,\X_i) \ | \ i\in \{1,\ldots, 1-q\} \gets \lincomb(\X)$
\State $\{(a_i,\Y_i) \ | \ i\in \{1,\ldots, 1-q\} \gets \lincomb(\Y)$
\State $\W\gets a_1a_1\multdfa(\X_1,\Y_1)$
\For{$i \in \{2,\ldots,q-1\}$}
    \For{$j\in \{2,\ldots,q-1\}$}
        \State $\W \gets \add(\W,a_ia_j\multdfa(\X_i,\Y_j))$
    \EndFor
\EndFor
\State \Return $\W$
\end{algorithmic}
\end{algorithm}

We will now define $\multdfa$. Suppose that $\X$ and $\Y$ are minimal well-ordered DFAs. Let $x=\Pow(\X)$ and let $y=\Pow(\Y)$. By definition of multiplication in Hahn fields, the coefficient of a term with value $\gamma$ in $xy$ is equal to the number of ways to write $r_1+r_2=\gamma$ with $r_1\in \supp(x)$ and $r_2\in \supp(y)$. Since these coefficients are in characteristic $p$, we can take this number modulo $p$. We will see that this amounts to counting the number of accepting paths modulo $p$ in a certain NFA and then use Lemma \ref{lem: counting paths} to conclude the result. To this end, let $M_x=\REV(\X)$ and let $M_y=\REV(\Y)$. Write $M_x=(Q_x,\Sigma_p,\delta_x,q_{0,x},\F_q,\tau_x)$ and $M_y=(Q_x,\Sigma_p,\delta_x,q_{0,x},\F_q,\tau_x)$. Let $S$ be the subset of $\Sigma_p^*\times \Sigma_p^*$ consisting of pairs $(w_1,w_2)$ with the following properties.
\begin{enumerate}[(1)]
    \item $w_1$ and $w_2$ have the same length.
    \item $w_1$ and $w_2$ each end with $0$.
    \item $w_1$ and $w_2$ each have a single radix point, and both are in the same position.
    \item After removing leading and trailing zeroes, $w_1$ and $w_2$ become the reversed valid base $p$ expansions of some $i\in \supp(x)$ and $j\in \supp(y)$ respectively.
\end{enumerate}

We will use Theorem \ref{thm: myhill-nerode} to show that $S$ is a regular language over $\Sigma=\Sigma_p\times\Sigma_p$, under the identification of $\Sigma^*$ with the subset of $\Sigma_p^*\times\Sigma_p^*$ where the coordinates have equal length.

To start, we will define an equivalence relation $\sim_x$ on $\Sigma^*$ as follows. Let $q\in Q_x$ and denote by $E^{(1,x)}_q$ the set of strings $w=u s_1\cdots s_kv\in\Sigma^*$ for which the following hold.
\begin{enumerate}[(i)]
    \item $u$ and $v$ are strings of only zeroes;
    \item $0\notin \{s_1, s_k\}$;
    \item $\delta^*(q_{0,x},s_1\cdots s_k v)=q$;
    \item $s_1\cdots s_k$ is accepted by $M_x$.
\end{enumerate}
Let $E^{(2,x)}_q$ be the set of strings defined as $E^{(1)}_q$, but with the fourth item replaced with
\begin{enumerate}[(iv')]
    \item $s_1\cdots s_k$ is not accepted by $M_x$.
\end{enumerate}
We have that the set $\{E^{(i,x)}_q \ | \ i\in\{1,2\}, \ q\in Q_x\}$ partitions $\Sigma^*$. Denote the equivalence relation given by this partitioning by $\sim_x$. By (iii), we have that $\sim_x$ is right-invariant. Since $Q_x$ is finite, we have that $\sim_x$ is of finite index. We define $\sim_y$ in exactly the same way, replacing all instances of $x$ with $y$.

Using $\sim_x$ and $\sim_y$, we now define an equivalence relation $\sim$ on $\Sigma^*$ as follows. Let $w=(w_1,w_2)\in\Sigma^*$. If $w_1$ and $w_2$ both contain exactly one radix point in the same position, we say that $w$ is of type A. If neither $w_1$ nor $w_2$ contains a radix point, say that $w$ is of type B. If $w$ is neither of type A nor of type B, we say that $w$ is of type C. Let $w'=(w'_1,w'_2)\in\Sigma^*$. We define $w\sim w'$ to hold if $w$ and $w'$ are both of type C, or if the following hold.
\begin{enumerate}[(a)]
    \item $w$ is of the same type as $w'$;
    \item $w_1\sim_xw'_1$;
    \item $w_2\sim_yw'_2$.
\end{enumerate}
Since $\sim_x$ and $\sim_y$ are right-invariant and of finite index, so is $\sim$. It is immediate that $S$ is the union of equivalence classes of $\Sigma^*/\sim$, namely the equivalence classes of elements $w=(w_1,w_2)$ of type A with $w_1\in E_q^{(1,x)}$ and $w_2\in  E_{q'}^{(1,y)}$ for some $q\in Q_x$ and some $q'\in Q_y$. We conclude that $S$ is regular.

We will now construct an explicit DFA $M=(Q,\Sigma,\delta,q_0,F)$ which accepts $S$. This is essentially an application of Corollary 4.1.9 in \cite{MR1997038}. Let $Q$ be a set of representatives of $\Sigma^*/\sim$. Such a set can be obtained by considering strings of bounded length, as in the definition of $\relevantstates$. For $w\in Q$ and $s\in \Sigma$, let $\delta(w,s)=w'$, where $ws \sim w'$. Let $q_0$ be the element in $Q$ which is equivalent to the empty string under $\sim$. Let $F=Q\cap S$. By construction, we have that $M$ accepts $S$.

Informally, the DFA $M$ can be seen to accept the pairs of reversed base $p$ expansions of the exponents in $\supp(x)\times \supp(y)$ which are well set up for adding these pairs together under base $p$ addition with carries. We will now construct an NFA $M' = (Q', \Sigma_p, \delta', q'_0, F')$ which captures this addition. Let $Q' = Q \times \{0, 1\}$, let $q'_0 = (q_0, 0)$ and let $F' = F \times \{0\}$, where $F$ is the set of accepting states of $M$. To define $\delta'$, let $(q,i)\in Q'$ and consider the following cases.
\begin{enumerate}
    \item If $s \in \{0, \ldots , p-1\}$, we include $(q',0)$ (resp. $(q',1)$) in $\delta'((q,i),s)$ if there exists a pair $(t, u) \in \{0,\ldots, p-1\}^2$ with $t + u + i < p$ (resp. $t + u + i \ge p$) and $t + u + i \equiv s \mod p$ such that $\delta(q,(t, u)) = q'$.
    \item If $s$ is equal to the radix point, we include $(q', i)$ in $\delta((q, i), s)$ if $\delta(q,(s, s)) = q'$, and we never include $(q', 1 - i)$. 
\end{enumerate}

It is shown in the proof of \cite[Lemma 2.2.2]{MR2289431} that the number of accepting paths of $w\in\Sigma_p^*$ in $M'$ is equal to the number of pairs $(w_1,w_2)\in S$ which sum to $w$ with its leading and trailing zeroes under ordinary base $p$ addition with carries. Let $N=\accpaths(M')$, as in Lemma \ref{lem: counting paths}. So the function $f_N(w)$ equals the number of accepting paths of $w$ in $M'$ modulo $p$.

A priori, there might be $(w_1,w_2)\in S$ which sum to a reversed base $p$ expansion of $\gamma$ even though $(w_1,w_2)$ does not sum to $w$, since we need to take into account leading and trailing zeroes. Appending leading zeroes to $w$ will account for more possible $(w_1,w_2)\in S$. From \cite{MR2289431}, we get a bound to how many zeroes we need to append. To make this more precise, let $m$ be greater than the number of states of $M$ and suppose that $w$ begins with $m$ leading zeroes. In the proof of \cite[Lemma 2.2.2]{MR2289431} it is shown that $0w$ then has the same number of accepting paths as $w$. 

We conclude that the function that, given the reversed valid base $p$ expansion of a number $\gamma\in\frac{1}{p^\infty}\N$, computes the mod $p$ reduction of the number of ways to write $\gamma=i+j$ with $(i,j)\in \supp(x)\times \supp(y)$ is given by a DFAO $N'=(\tilde{Q}',\Sigma_p,\tilde{\delta}',\tilde{q}'_0,\F_p, \tilde{\tau}')$ which is constructed using $N=(\tilde{Q},\Sigma_p,\tilde{\delta},\tilde{q}_0,\F_p, \tilde{\tau})$ in the following way. Let $G=(V,E)$ be the transition graph of $N$. We set the initial state of $N'$ to be $\tilde{q}'_0=\tilde{\delta}^*(\tilde{q}_0,m\cdot 0)$, where $m\cdot 0$ denotes the string with $m$ zeroes. We let the transition graph of $N'$ be $G_{\tilde{q}'_0}$. For any state $q$ of $N'$, we set $\tilde{\tau}'(q)=\tilde{\tau}(\tilde{\delta}(q,0))$. In other words, $N'$ amounts to appending $m$ leading zeroes and one trailing zero to a string and then running the result through $N$. Finally, having $\multdfa$ returning $\W=\REV(N')$ on input $(\X,\Y)$ gives the desired result.
\end{proof}

\begin{remark}
    For any $i\in\N$, let $M_i$ be the DFAO illustrated after Definition \ref{def: transition graph}, such that the accepted string $s_1\cdots s_n$ is a valid base $p$ expansion of $i$. By construction, we then have that $\Pow(M_i)=t^i$. By Lemma \ref{lem: DFAO multiplication}, this implies that for a DFAO $M$, we have $\Pow(\mult(M_i, M))=t^i\Pow(M)$. We can thus extend the notation $aM$ for $a\in \F_p$ to allow for $a=\sum_{j=0}^sa_j t^j\in \F_p[t]$, by letting $aM$ be the DFAO defined by adding together the DFAOs $a_jM_j$ using the algorithm $\add$.
\end{remark}

\begin{remark}
There is an algorithm $\power^i$ for any $i\in \N$ which takes as input a well-ordered DFAO $M$ and returns a DFAO $N$ such that $\Pow(N)=\Pow(M)^i$. The sequence of such algorithms $(\power^i)_{i\in \N}$ is defined recursively by letting $\power^0(M)=\mathbbm{1}$ and
$$\power^{i+1}(M)=\mult(M,\power^i(M)).$$
\end{remark}

\begin{remark}
\label{rem: is root}
There is an algorithm $\isrootalg$ which takes as input a polynomial $f(X)\in\F_p[t][X]$ and a DFAO $\X$ and returns $\true$ if $f(\Pow(\X))=0$ and $\false$ otherwise. It is defined as follows.
\begin{algorithm}[H]
\caption*{$\isrootalg(f=\sum_{i=0}^na_iX^i,\X)$}
\label{alg:isroot}
\begin{algorithmic}
\State $\W \gets a_0\mathbbm{1}$ 
\For{$i\in\{1,\ldots,n\}$}
    \State $\X^i\gets \power^i(\X)$
    \State $\W\gets \add(\W,a_i\X^i)$
\EndFor
\If{$\relevantstates(\W) = \emptyset$}
    \State \Return $\true$
\EndIf
\State \Return $\false$
\end{algorithmic}
\end{algorithm}
\end{remark}

\end{document}